\theoremstyle{definition}
\theoremstyle{plain}
\date{}
\newtheorem{Thm}{Theorem}[section]
\newtheorem{Prop}[Thm]{Proposition}
\newtheorem{Cor}[Thm]{Corollary}
\newtheorem{Rem}[Thm]{Remark}
\newcommand{\graph}{\mbox{\rm graph}}
\newcommand{\dis}{\displaystyle}
\newcommand{\norm}{\parallel}
\newcommand{\Z}{{\mathbb Z}}
\newcommand{\T}{{\mathbb T}}
\newcommand{\N}{{\mathbb N}}
\newcommand{\R}{{\mathbb R}}
\newcommand{\supp}{{\rm supp}\,}
\newcommand{\ep}{\varepsilon}
\def\text#1{\mbox{#1 }}
\title{\bf Weak KAM theory for discount Hamilton-Jacobi equations and its application}
\author{Hiroyoshi Mitake 
\footnote{Institute of Engineering, Division of Electrical, Systems and Mathematical Engineering, 
Hiroshima University, 1-4-1 Kagamiyama, Higashi-Hiroshima, 739-8527, Japan (hiroyoshi-mitake@hiroshima-u.ac.jp) Partially supported by JSPS grants: KAKENHI \#15K17574, \#26287024, \#16H03948.}\,\,\,
and Kohei Soga
\footnote{Department of Mathematics, Faculty of Science and Technology, Keio University, 3-14-1 Hiyoshi, Kohoku-ku, Yokohama, 223-8522, Japan (soga@math.keio.ac.jp). 
Partially supported by JSPS Grant-in-aid
for Young Scientists (B) \#15K21369.}}
\begin{document}
\maketitle
\begin{abstract}
\noindent Weak KAM theory for discount Hamilton-Jacobi equations and corresponding discount Lagrangian/Hamiltonian dynamics is developed. Then it is applied to error estimates for viscosity solutions in the vanishing discount process. The main feature is to introduce and investigate the family of $\alpha$-limit points of minimizing curves, with some details in terms of minimizing measures. In error estimates,  the family of $\alpha$-limit points is effectively exploited with properties of the corresponding dynamical systems.        

\medskip
\noindent{\bf Keywords:} discount Hamilton-Jacobi equation; Lagrangian dynamics; Hamiltonian dynamics; weak KAM theory; $\alpha$-limit point; error estimate 
 \medskip

\noindent{\bf AMS subject classifications:} 
{\color{black}
35B40, %Asymptotic behavior of solutions, 
37J50, %Action-minimizing orbits and measures
49L25 %Viscosity solutions  
}
\end{abstract}
\setcounter{section}{0}
\setcounter{equation}{0}
\section{Introduction}
Weak KAM theory states the connection between viscosity solutions of the Hamilton-Jacobi equation with constant $c\in\R^n$ and $h(c )\in\R$,
\begin{eqnarray} \label{HJ}
H(x,c+v_x(x))=h(c)\mbox{\qquad in $\T^n$}
\end{eqnarray}
and the corresponding Hamiltonian (resp., Lagrangian) dynamics generated by $H$ (resp., the Legendre transform $L$ of $H$). The function $H$ is assumed to be a Tonelli Hamiltonian, i.e.,

(H1) $H(x,p):\T^n\times\R^n\to\R$, $C^2$, \quad\\
\indent (H2) $H_{pp}$ is positive definite,  \quad\\
\indent(H3) $\dis \lim_{|p|\to+\infty}\frac{H(x,p)}{|p|}=+\infty$ uniformly.

\noindent It is well-known that for each $c\in\R^n$ there exists the unique constant $h(c )$ for which (\ref{HJ}) admits a viscosity solution, where such a viscosity solution is not unique with respect to $c$ even up to constants. The function $c\mapsto h(c )$ is called the \textit{effective Hamiltonian}.  

There are several techniques to construct or approximate viscosity solutions of (\ref{HJ}) such as the vanishing viscosity method, a finite difference approximation and a discount approximation. In this paper, we consider a discount approximation of the form
\begin{eqnarray}\label{dHJ}   
\varepsilon v^\ep+H(x,c+v_x^\ep)=h(c )\mbox{\qquad in $\T^n$},
\end{eqnarray}
where $h(c )$ is same as the effective Hamiltonian in (\ref{HJ}) and $\ep>0$. The problem (\ref{dHJ}) is uniquely solvable for each $c$ because of the term ``$\ep v^\ep$'', which is sometimes called a \textit{discount factor} in the theory of optimal control. Recently it is proved that there exists a viscosity solution $v^\ast$ of (\ref{HJ}) such that $v^\ep\to v^\ast$ uniformly as $\ep\to0+$ based on weak KAM theory \cite{Davini} and on the nonlinear adjoint method \cite{MT} (\cite{MT} covers some degenerate elliptic problems). Note that this convergence result holds even in the case where (\ref{HJ}) has more than one solution beyond constant difference. 
Thus, this convergence itself is highly nontrivial, and 
is sometimes called the \textit{selection problem} in Hamilton-Jacobi equations. The selection criterion in the vanishing discount process is given in \cite{Davini}, \cite{MT}, but a rate of convergence is still an interesting open problem. 

We will give partial results on a rate of convergence, investigating weak KAM theory to discount Hamilton-Jacobi equations and the corresponding dynamical systems. The corresponding dynamical systems are Lagrangian/Hamiltonian dynamics with a friction term (\ref{dEL})/(\ref{dHS}), where they are still equivalent through the Legendre transform but there is not the Hamiltonian structure due to the friction term.  We call the dynamical systems the \textit{discount Lagrangian/Hamiltonian dynamics}. Variational characterization of $v^\ep$ by discount value functions is available as  (\ref{d-value-func}). Each minimizing curve of  (\ref{d-value-func}) yields an orbit of the discount Lagrangian/Hamiltonian dynamics. 
Then, discussion similar to standard weak KAM theory is available to discount Hamilton-Jacobi equations. 
Our investigation owes to the fact that discount Hamilton-Jacobi equations are of the class that admits the method of characteristics with deterministic dynamical systems. This is not the case with the vanishing viscosity method or a finite difference method, which makes the problem more difficult  in general.

One of main features of this paper is to focus on $\alpha$-limit points of minimizing curves to recover weak KAM theory, which does not seem to be done yet in the literature. Let us first recall original \textit{Mather's minimizing problem}: 
\begin{enumerate}
\item[(M)] {\it Consider }
\end{enumerate}
\begin{eqnarray}\label{p-mather}
\inf \int_{\T^n\times\R^n} L(x,\xi)-c\cdot\xi\,\,d\mu=-h(c), 
\end{eqnarray}
\begin{enumerate}
\item[] {\it where the infimum is taken over all probability measures on $\T^n\times\R^n$ which are invariant under the Euler-Lagrange flow of {\rm(\ref{EL})}.}
\end{enumerate}
A minimizing measure of (M) is called a \textit{Mather measure}, and the union of the supports of all Mather measures with each $c$ is called the {\it Mather set}.  There are many ways to construct a Mather measure, e.g., \cite{Mather1},  \cite{Fathi}, \cite{Mane}, \cite{EG}, \cite{MT}. 
General properties of the Mather set in regards to viscosity solutions are well-known. It is also known that each minimizing curve of a viscosity solution to (\ref{HJ}) induces a Mather measure, whose support is contained in the family of $\alpha$-limit points of the curve \cite{Fathi}, \cite{WE}.  
Furthermore, Poincar\'e's recurrence theorem states that, for each Mather measure $\mu$, $\mu$-a.e. points of $\supp(\mu)$ are recurrence.  Hence one can expect  inclusion between the Mather set and the family of  $\alpha$-limit points of all minimizing curves of all viscosity solutions to (\ref{HJ}).  However, since $\alpha$-limit points of minimizing curves are obtained viscosity-solution-wise, it is not trivial to find a general property of the family independently from a choice of a viscosity solution. 

Our first aim is to specify the family of  all $\alpha$-limit points of all minimizing curves for each $c$ to be one of the main objects in weak KAM theory such as the Mather set, Aubry set, Ma\~n\'e set, etc.  We show that the family of $\alpha$-limit points contains the Mather set and is contained in the Aubry set, summarizing its general properties.   

Our second aim is to generalize what we will observe in the non-discount problem to the discount problem, where complementary analysis of viscosity solutions and the corresponding dynamical systems with minimizing curves  is effectively done. Parallel argument with minimizing measures is also available. Indeed, we consider a minimizing problem associated with the discount problem: 
\begin{enumerate}
\item[(M1)$^\ep$] {\it Let $v^\ep$ be the {\rm(}unique{\rm) }viscosity solution of {\rm(\ref{dHJ})}. Consider 
$$\inf \int_{\T^n\times\R^n}L(x,\xi)-c\cdot\xi    -\ep v^\ep(x) \,d\mu=-h(c),$$
where the infimum is taken over all  probability measures on $\T^n\times\R^n$ which are invariant under the flow of the discount  Euler-Lagrange equation {\rm(\ref{dEL})}.  }\\
\end{enumerate}
This minimizing problem can be considered as a natural generalization of (M). The minimizing measures of (M1)$^\ep$ are, by definition, invariant under the flow of the discount Lagrangian dynamics, and the union of their support is contained in the family of $\alpha$-limit points of all minimizing curves of the viscosity solution to (\ref{dHJ}). We prove that the family of $\alpha$-limit points is obtained with properties similar to those in the non-discount problem. We also compare (M1)$^\ep$ with another type of a generalization of (M) to the discount problem: 
\begin{itemize}
\item[(M2)$^\ep$]   {\it Let $v^\ep$ be the viscosity solution of {\rm(\ref{dHJ})}. Consider for each $x_0\in\T^n$,
$$\inf \int_{\T^n\times\R^n}L(x,\xi)-c\cdot\xi\,d\mu=-h(c)+\ep v^\ep(x_0),$$
where the infimum is taken over all  probability measures on $\T^n\times\R^n$ which satisfy }
\end{itemize}
\begin{eqnarray}\label{d-holonomic}
\int_{\T^n\times\R^n}\varphi_x(x)\cdot\xi \,d\mu =\ep \varphi(x_0)-\ep\int_{\T^n\times\R^n}\varphi(x) d\mu\,\,\,\,\,\,\mbox{\it for all $\varphi\in C^1(\T^n,\R)$}. \\\nonumber
\end{eqnarray}
This problem is first introduced in \cite{G2005}, and then the selection problem in the vanishing discount process is solved with its minimizing measures \cite{Davini}, \cite{MT}.  Minimizing measures of  (M2)$^\ep$ are not invariant, and their support has no information on $\alpha$-limit points in general. Regardless of such difference between (M1)$^\ep$ and (M2)$^\ep$, both minimizing measures tend to some of minimizing measures of (M), i.e., Mather measures, 
as the discount  parameter $\ep$ goes to zero. 

Our third aim is to apply the analysis on the family of $\alpha$-limit points of minimizing curves  to an error estimate between $v^\ep$ and the unique limit $v^\ast$ of the vanishing discount process obtained in \cite{Davini}, \cite{MT}. We take the following strategy to obtain an estimate:  
\begin{enumerate}
\item[(i)] First, we estimate an error on the set of the family of $\alpha$-limit points. 
\item[(ii)] Second, we estimate the time for each point of $\T^n$ to 
fall into the family of $\alpha$-limit points along a minimizing curve of $v^\ep$ and $v^\ast$. 
\end{enumerate}
This strategy would work well, if properties of the corresponding dynamical systems are a priori known. In this paper, we show two successful examples. 

The first example is a simple one-dimensional problem whose corresponding Lagrangian dynamics and  Hamiltonian dynamics possesses hyperbolicity, where exponential asymptotics toward the family of $\alpha$-limit points is available. Since hyperbolicity is persistent for any small (non-Hamiltonian) perturbation, the discount Lagrangian/Hamiltonian dynamics can be still studied by means of hyperbolicity. 
In this discussion, we specify the families of  $\alpha$-limit points of minimizing curves of $v^\ep$ and $v^\ast$,  where we can also observe which Mather measures are obtained as the limit of minimizing measures of (M1)$^\ep$ through the vanishing discount process (selection of Mather measures). It is interesting to note that not every Mather measure is available. The difference between the selection criterion of the vanishing discount process and that of the vanishing viscosity process is also visible. See  Theorem \ref{d-Mather1} and  Remark \ref{same} for details.

The second example is that an exact viscosity solution admits a KAM torus. In this case, the family of $\alpha$-limit points is equal to  the whole set $\T^n$ and each minimizing curve is ergodic on $\T^n$ with a Diophantine rotation vector. We obtain an error estimate using a rate of ergodicity given by the Diophantine exponent. 

Although we demonstrate error estimates only in two special cases as a first step, our results imply that the error between $v^\ep$ and $v^\ast$ may depend highly on dynamics of the corresponding dynamical systems in general.

Finally we refer to recent development of analysis on the selection problem related to (\ref{HJ}). A generalization of \cite{Davini}, \cite{MT} to second-order fully nonlinear problems is given in \cite{I} introducing a dual method. A discrete version of \cite{Davini} is shown in \cite{Davini 2}.  Some non-convex cases are studied in \cite{GMT}.  A partial result to the selection problem in the vanishing viscosity method is obtained in \cite{Bessi}, \cite{A}, and that in a finite difference method is given in \cite{Soga4}. The lecture note  \cite{MT-Lecture} states lots about related topics based on the nonlinear adjoint method. Except for the case of the vanishing discount approximation in a convex setting, the selection problem on \eqref{HJ} is still rather open. 

The paper is organized as follows: In Section 2, we recall some of weak KAM theory for the non-discount problem and investigate the family of $\alpha$-limit points. In Section 3, we extend the argument in Section 2 to the discount problem, with details on (M1)$^\ep$ and (M2)$^\ep$. In Section 4, we demonstrate error estimates for $v^\ep$ and $v^\ast$ in the above two cases.     
 
\noindent {\bf Acknowledgement.} The authors would like to thank Professor Albert Fathi for valuable comments on this work. 
The authors are grateful to Professors Diogo A. Gomes and Yifeng Yu for valuable comments for the previous version of the manuscript.   
%%%%%%%%%%%%%%
\setcounter{section}{1}
\setcounter{equation}{0}
\section{Weak KAM theory for non-discount problem}
We overview weak KAM theory and investigate the family of $\alpha$-limit points of minimizing curves, which is denoted by $\mathcal{M}_\alpha(c)$. We show that $\mathcal{M}_\alpha(c)$ is a set between the (projected) Mather set $\mathcal{M}(c)$ and the (projected) Aubry set $\mathcal{A}(c)$.       
%%%%%%%%%%%%%
\subsection{Viscosity solution and minimizing curve}
In this section we recall several known facts on viscosity solutions of (\ref{HJ}) (see, e.g., \cite{Cannarsa} for more details). Let $L(x,\xi)$ be the Legendre transform of $H(x,p)$, which satisfies under (H1)--(H3), 
$$L(x,\xi)= \sup_{p\in\R^n}\{ \xi\cdot p -H(x,p)\}.$$
Let $v(x)$ be a viscosity solution of (\ref{HJ}). Note that $v$ is Lipschitz continuous. Then $v(x)$ satisfies for each $x\in\T^n$ and $T>0$,
\begin{eqnarray}\label{value-func}
v(x)=\inf_{\gamma\in AC,\gamma(0)=x}\left\{ \int^0_{-T}  ( L(\gamma(s),\gamma'(s))-c\cdot\gamma'(s) +h(c ) ) ds+v(\gamma(-T)) \right\},
\end{eqnarray} 
where $AC$ is the family of all absolutely continuous curves $[-T,0]\to\T^n$. By Tonelli's theory, one can find at least one minimizing curve $\gamma^\ast$ of the variational problem (\ref{value-func}), which is a $C^2$-solution of the Euler-Lagrange equation generated by $L^c:=L-c\cdot\xi$,
\begin{eqnarray}\label{EL}\quad\,\,\,
\frac{d}{ds} \{L^c_\xi(x(s),x'(s))\}=L^c_x(x(s),x'(s))\Leftrightarrow \frac{d}{ds} \{L_\xi(x(s),x'(s))\}=L_x(x(s),x'(s)).
\end{eqnarray}
Let $\phi^s_L$ denote the Euler-Lagrange flow of (\ref{EL}), i.e., $\phi^s_L(x(0),x'(0))=(x(s),x'(s))$. The viscosity solution $v$ is differentiable on the above minimizing curve $\gamma^\ast$ satisfying for $s\in[-T,0)$,
\begin{eqnarray}\label{nonderivative}
v_x(\gamma^\ast(s))=L^c_\xi(\gamma^\ast(s),\gamma^\ast{}'(s))=L_\xi(\gamma^\ast(s),\gamma^\ast{}'(s))-c.
\end{eqnarray}
In particular, if $v_x(x)$ exists, (\ref{nonderivative}) holds for $s=0$ and $\gamma^\ast$ is the unique minimizing curve for $v(x)$. The minimizing curve $\gamma^\ast$ can be extended to $(-\infty,0]$. By the variational property, we obtain for any $\tau>0$ and $\tilde{\tau}\ge\tau$, 
\begin{eqnarray*}
v(\gamma^\ast(-\tau))&=& \int^{-\tau}_{-\tilde{\tau}}  (L(\gamma^\ast(s),\gamma^\ast{}'(s))-c\cdot\gamma^\ast{}'(s) +h(c ))ds+v(\gamma^\ast(-\tilde{\tau}))\\
&=& \int^{0}_{-\tilde{\tau}+\tau}  (L(\gamma^\ast(s-\tau),\gamma^\ast{}'(s-\tau))-c\cdot\gamma^\ast{}'(s-\tau) +h(c ))ds+v(\gamma^\ast(-\tilde{\tau})),\\
v(x)&=&\int^{0}_{-\tau} ( L(\gamma^\ast(s),\gamma^\ast{}'(s))-c\cdot\gamma^\ast{}'(s) +h(c ))ds+v(\gamma^\ast(-\tau)),
\end{eqnarray*}
and hence we see that (\ref{nonderivative}) holds for all $s<0$. 

We call an extended minimizing curve defined on $(-\infty,0]$ a one-sided global minimizing curve or just a minimizing curve. Since $\gamma^\ast(s)$ solves (\ref{EL}) for $s\le 0$, we see that $(\gamma^\ast(s),p^\ast(s))$ with $p^\ast(s):=L_x(\gamma^\ast(s),\gamma^\ast{}'(s))$ is a $C^1$-solution of the Hamiltonian system,
\begin{eqnarray}\label{HS}
\left\{
\begin{array}{ll}
x'(s)=H_p(x(s),p(s)), \\ 
p'(s)=-H_x(x(s),p(s)).
\end{array}
\right. 
\end{eqnarray} 
%\begin{eqnarray}\label{HS}
%x'(s)=H_p(x(s),p(s)),\,\,\,p'(s)=-H_x(x(s),p(s)).
%\end{eqnarray} 
Let $\phi^s_H$ denote  the Hamiltonian flow of (\ref{HS}).  Due to equivalence between (\ref{EL}) and (\ref{HS}), and (\ref{nonderivative}), we have $p^\ast(s)=c+v_x(\gamma^\ast(s))$ for all $s< 0$, which means that for all $s<0$, 
$$(\gamma^\ast(s),p^\ast(s))\in \graph (c+v_x):=\{ (x,c+v_x(x))\,|\,x\in \T^n \mbox{ such that $v_x(x)$ exists}\}.$$  
Therefore we have 
%%%%%%%%%%
\begin{Thm}
The set $\graph(c+v_x)$ is backward invariant under the Hamiltonian flow $\phi^s_H$, i.e., $\phi^s_H(\graph(c+v_x))\subset \graph(c+v_x)\mbox{\quad for all  $s
\le0$}$. 
\end{Thm}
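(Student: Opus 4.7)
The plan is to assemble the facts already collected in this section: every point of $\graph(c+v_x)$ lies on a backward minimizing curve, and along such a curve formula (\ref{nonderivative}) persists for all negative times.

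More precisely, I would start by fixing an arbitrary point $(x, c + v_x(x)) \in \graph(c+v_x)$, which by definition means $v_x(x)$ exists. Then I would invoke the Tonelli minimizer $\gamma^\ast$ with $\gamma^\ast(0) = x$ that realizes (\ref{value-func}) and the fact, stated just before the theorem, that $\gamma^\ast$ extends to a one-sided global minimizer on $(-\infty,0]$ and is the \emph{unique} minimizer for $v(x)$ because $v_x(x)$ exists. Setting $p^\ast(s) = L_\xi(\gamma^\ast(s), \gamma^\ast{}'(s))$, the curve $(\gamma^\ast(s), p^\ast(s))$ is a $C^1$ solution of the Hamiltonian system (\ref{HS}), so
\[
\phi^s_H\bigl(x, c + v_x(x)\bigr) = \bigl(\gamma^\ast(s),\, p^\ast(s)\bigr) \quad \text{for all } s \le 0,
\]
where I have used that $p^\ast(0) = L_\xi(\gamma^\ast(0), \gamma^\ast{}'(0)) = c + v_x(x)$, by (\ref{nonderivative}) evaluated at $s=0$ (which is valid precisely because $v_x(x)$ exists).

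Next I would apply (\ref{nonderivative}) at negative times. The excerpt already establishes that $v$ is differentiable along $\gamma^\ast$ for every $s < 0$ and that $p^\ast(s) = c + v_x(\gamma^\ast(s))$ there. Hence
\[
\phi^s_H\bigl(x, c + v_x(x)\bigr) = \bigl(\gamma^\ast(s),\, c + v_x(\gamma^\ast(s))\bigr) \in \graph(c + v_x) \quad \text{for all } s \le 0,
\]
which gives the desired backward invariance.

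There is essentially no obstacle here: the statement is a direct compilation of the differentiability along minimizers, the extendability of the minimizer to $(-\infty,0]$, and the equivalence between (\ref{EL}) and (\ref{HS}) via the Legendre transform. The only tiny point to be careful about is the endpoint $s=0$, where one needs the hypothesis that $v_x(x)$ exists in order to have a well-defined element of the graph and a unique minimizer; for $s<0$ uniqueness of the minimizer is not needed since differentiability of $v$ at $\gamma^\ast(s)$ has already been derived from the variational identity on translated intervals.
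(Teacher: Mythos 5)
Your proposal is correct and follows essentially the same route as the paper: the theorem is stated there as an immediate consequence of the preceding discussion, namely the extension of the Tonelli minimizer to a one-sided global minimizer on $(-\infty,0]$, the identity (\ref{nonderivative}) for all $s<0$ (and at $s=0$ when $v_x(x)$ exists), and the equivalence of (\ref{EL}) and (\ref{HS}) giving $p^\ast(s)=c+v_x(\gamma^\ast(s))$, hence $\phi^s_H(x,c+v_x(x))=(\gamma^\ast(s),p^\ast(s))\in\graph(c+v_x)$ for $s\le0$. Your added care about identifying $\phi^s_H(x,c+v_x(x))$ with $(\gamma^\ast(s),p^\ast(s))$ via $p^\ast(0)=c+v_x(x)$ and uniqueness for the ODE is exactly the implicit step the paper relies on.
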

%%%%%%%
\subsection{$\alpha$-limit point}
Let $\gamma^\ast$ be a one-sided global minimizing curve. Consider $\alpha$-limit points of $\gamma^\ast$, where $x^\ast\in\T^n$ is called an $\alpha$-limit point if there exists a monotone sequence $\tau_j\to-\infty$ as $j\to\infty$ for which $\gamma^\ast(\tau_j)\to x^\ast$ as $j\to\infty$.  Fix $c$ and define the set for each viscosity solution $v$ of (\ref{HJ})  as
\begin{eqnarray*}
\,\,\,\,\,\mathcal{M}_\alpha(v;c ):=\overline{\{ x^\ast\in\T^n\,\,|\,\,\mbox{$x^\ast$: $\alpha$-limit point of $\gamma^\ast$, $\gamma^\ast$: minimizer for $v(x)$, $x\in\T^n$  }  \}},
\end{eqnarray*}
and take their union,
\begin{eqnarray*}
\mathcal{M}_\alpha(c ):=\overline{\bigcup_v \mathcal{M}_\alpha(v;c)},
\end{eqnarray*}
where 
$\overline{A}$ stands for the closure of $A\subset\R^m$ for $m\in\N$, and the union is taken over all the viscosity solutions of (\ref{HJ}) with fixed $c$. Here are properties of $\mathcal{M}_\alpha(v;c)$:
%%%%%%%%%%%%%%%
\begin{Thm}\label{invariance}
\begin{enumerate}[{\rm(i)}]
\item Each viscosity solution $v$ to \eqref{HJ} is differentiable on ${\mathcal{M}_\alpha}(v;c)$. 
\item Let $\gamma^\ast$ be a minimizing curve of $v$ and $(x^\ast,\xi^\ast)$ be an $\alpha$-limit point of $(\gamma^\ast(s),\gamma^\ast{}'(s))$. Then $\xi^\ast=H_p(x^\ast,c+v_x(x^\ast))$.
\item Let $x^\ast$ be a point of $\mathcal{M}_\alpha(v;c)$ and $x(s)$ be the solution of the Euler-Lagrange equation (\ref{EL}) with $x(0)=x^\ast$, $x'(0)=H_p(x^\ast,c+v_x(x^\ast))$. Then $x(s)\in\mathcal{M}_\alpha(v;c)$ and $v_x(x(s))=L_\xi(x(s),x'(s))-c$ for all $s\in\R$. 
\item Let  $v$ and $\tilde{v}$ be  two viscosity solutions of \eqref{HJ}. If $v=\tilde{v}$ on $\mathcal{M}_\alpha(c )$, then $v=\tilde{v}$ on $\T^n$.
\end{enumerate}
\end{Thm}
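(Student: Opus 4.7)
The plan is to prove the four assertions in sequence, where (i) carries the main analytic content and the remaining parts follow from it with routine arguments.

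For (i), fix $x^\ast$ an $\alpha$-limit point of a minimizer $\gamma^\ast$ for some $v(x)$, so that $\gamma^\ast(\tau_j)\to x^\ast$ along a sequence $\tau_j\to -\infty$. A Tonelli a priori estimate bounds the velocities of one-sided minimizers uniformly, so passing to a subsequence I may assume $\gamma^\ast{}'(\tau_j)\to\xi^\ast$ for some $\xi^\ast\in\R^n$. Let $\tilde\gamma$ be the Euler--Lagrange orbit through $(x^\ast,\xi^\ast)$ at $s=0$. Continuous dependence of (\ref{EL}) makes $\tilde\gamma$ the local-uniform limit of the shifted minimizers $\gamma^\ast(\tau_j+\cdot)$, and lower semi-continuity of the action combined with calibration of each shift transfers calibration to $\tilde\gamma$; in particular the restriction $\tilde\gamma|_{(-\infty,1]}$ is a one-sided global minimizer ending at $\tilde\gamma(1)$. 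Since $x^\ast=\tilde\gamma(0)$ lies strictly in the interior of this minimizer, (\ref{nonderivative}) yields differentiability of $v$ at $x^\ast$ with $v_x(x^\ast)=L_\xi(x^\ast,\xi^\ast)-c$. For a point of the closure $\mathcal{M}_\alpha(v;c)$ that is the limit of $\alpha$-limit points $x^\ast_k$, the same compactness-and-continuous-dependence argument applied to the associated calibrated orbits yields a calibrated limit orbit through the limit point, so (\ref{nonderivative}) again gives differentiability.

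Part (ii) is then immediate from (i): the relation $c+v_x(x^\ast)=L_\xi(x^\ast,\xi^\ast)$ has Legendre dual $\xi^\ast=H_p(x^\ast,c+v_x(x^\ast))$. For (iii), let $x^\ast\in\mathcal{M}_\alpha(v;c)$ and let $x(s)$ be the orbit of (\ref{EL}) prescribed in the statement. By (ii) the prescribed initial velocity $H_p(x^\ast,c+v_x(x^\ast))$ equals the $\xi^\ast$ produced in (i), so $x(s)$ coincides with the calibrated extension $\tilde\gamma(s)$ constructed there. For any fixed $s\in\R$, the relation $x(s)=\lim_j \gamma^\ast(\tau_j+s)$ with $\tau_j+s\to-\infty$ shows $x(s)\in\mathcal{M}_\alpha(v;c)$, and the gradient identity is (\ref{nonderivative}) applied to the calibrated minimizer $\tilde\gamma|_{(-\infty,s+1]}$ at its interior point~$s$.

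For (iv), take any $x\in\T^n$ and a backward minimizer $\gamma^\ast$ for $v(x)$. The dynamic programming equality (\ref{value-func}) for $v$ and the viscosity sub-solution inequality for $\tilde v$ applied along the same curve give, for every $T>0$,
\begin{equation*}
\tilde v(x)-v(x)\le \tilde v(\gamma^\ast(-T))-v(\gamma^\ast(-T)).
\end{equation*}
Choosing $T=T_j\to\infty$ so that $\gamma^\ast(-T_j)$ converges to an $\alpha$-limit point $x^\ast\in\mathcal{M}_\alpha(v;c)\subset\mathcal{M}_\alpha(c)$, the hypothesis $v=\tilde v$ on $\mathcal{M}_\alpha(c)$ together with continuity of $v$ and $\tilde v$ yields $\tilde v(x)\le v(x)$. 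Swapping the roles of $v$ and $\tilde v$ gives equality. The main obstacle is the closure step in (i): upgrading differentiability from interior points of calibrated minimizers to every accumulation point of $\alpha$-limit points. The key tools are Tonelli compactness of minimizing velocities and stability of calibration under locally uniform limits of trajectories, which together realize every point of $\mathcal{M}_\alpha(v;c)$ as an interior point of some calibrated minimizer.
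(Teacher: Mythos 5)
Your proposal is correct and follows essentially the same route as the paper: extract a convergent subsequence of velocities at the $\alpha$-limit point, pass to the limit orbit of (\ref{EL}) by continuous dependence, transfer the calibration identity to that orbit so that $x^\ast$ becomes an interior point of a minimizer where (\ref{nonderivative}) applies, handle accumulation points by the same compactness argument, and prove (iv) via the one-sided comparison $\tilde v(x)-v(x)\le \tilde v(\gamma^\ast(\tau_j))-v(\gamma^\ast(\tau_j))$ evaluated at the $\alpha$-limit point. The only cosmetic difference is that you invoke lower semicontinuity of the action to transfer calibration, whereas the paper simply passes to the limit in the calibration equality using the locally uniform $C^1$ convergence; both are valid.
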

%%%%%%%%%%%%%%%
\begin{proof}
Let $x^\ast$ be an arbitrary point of $\mathcal{M}_\alpha(v;c)$ such that there exist a minimizing curve $\gamma^\ast:(-\infty,0]\to\T^n$ and $\tau_j\to-\infty$ ($j\to\infty$) satisfying $\gamma^\ast(\tau_j)\to x^\ast$ ($j\to\infty$). Since $\gamma^\ast{}'(\tau_j)=H_p(\gamma^\ast(\tau_j),c+v_x(\gamma^\ast(\tau_j)))$ is a bounded sequence, there exists a subsequence, still denoted by $\gamma^\ast{}'(\tau_j)$, which converges to some $\xi^\ast\in\R^n$. Let $x(s):\R\to\T^n$ be the solution of (\ref{EL}) with the initial condition $x(0)=x^\ast,\,x'(0)=\xi^\ast$. It follows from the continuous dependence on initial conditions that for any $\alpha>0$, 
$$(\gamma^\ast(\cdot+\tau_j), \gamma^\ast{}'(\cdot+\tau_j))\to (x(\cdot),x'(\cdot))\mbox{ uniformly on $[-\alpha,\alpha]$ ($j\to\infty$)}.$$   
Hence, we see that $x(s)\in\mathcal{M}_\alpha(v;c)$ for all $s\in\R$ by the definition of $\mathcal{M}_\alpha(v;c)$. Since $\gamma^\ast(\cdot+\alpha+\tau_j):[-2\alpha,0]\to\T^n$ is the unique minimizing curve for $v(\gamma^\ast(\alpha+\tau_j))$ for each fixed large $j$, i.e.,
\begin{eqnarray*}
v(\gamma^\ast(\alpha+\tau_j))&=&\int^0_{-2\alpha} (L(\gamma^\ast(s+\alpha+\tau_j),\gamma^\ast{}'(s+\alpha+\tau_j))-c\cdot\gamma^\ast{}'(s+\alpha+\tau_j) \\
&&+h(c ))ds+v(\gamma^\ast(-\alpha+\tau_j)),
\end{eqnarray*}
we obtain by letting $j\to\infty$,
\begin{eqnarray*}
v(x(\alpha))=\int^0_{-2\alpha}(L(x(s+\alpha),x'(s+\alpha))-c\cdot x'(s+\alpha) +h(c ))ds
+v(x(-\alpha)). 
\end{eqnarray*}  
Therefore, we see that $x(s+\alpha)$, $s\in[-\alpha,\alpha]$ is a minimizing curve for $v(x(\alpha))$ and thus  $v_x(x(\alpha+s))=L_\xi(x(\alpha+s),x'(\alpha+s))-c$ for all $s\in[-2\alpha,0)$ due to (\ref{nonderivative}). For $s=-\alpha$, we obtain $v_x(x^\ast)=L_\xi(x^\ast,\xi^\ast)-c$ and $\xi^\ast=x'(0)=H_p(x^\ast,c+v_x(x^\ast))$. 

If $x^\ast$ is an accumulating point of $\mathcal{M}_\alpha(v;c)$, we have a sequence $\{(x^\ast_i,\xi_i^\ast)\}$ of $\alpha$-limit points of minimizing curves $(\gamma^\ast_i(s),\gamma^\ast{}'_i(s))$ such that  $(x^\ast_i,\xi_i^\ast)\to(x^\ast,\xi^\ast)$ as $i\to\infty$. Let $x(s)$ be the solution of (\ref{EL}) with the initial condition $(x^\ast,\xi^\ast)$. Since the above argument holds for each $(x^\ast_i,\xi_i^\ast)$, the continuous dependence yields the same result for $x(s)$.   Noting that $\alpha>0$ is arbitrary, we complete the proof of (i) to (iii). 

We prove (iv). Let $x$ be an arbitrary point of $\T^n$. Let $\gamma^\ast$ (resp., $\tilde{\gamma}^\ast$) be a minimizing curve for $v(x)$ (resp., $\tilde{v}(x)$). There exists $\tau_j\to-\infty$ ($j\to\infty$) and $x^\ast\in\mathcal{M}_\alpha(c )$ such that $\gamma^\ast(\tau_j)\to x^\ast$ ($j\to\infty$) (resp., $\tilde{\tau}_j\to-\infty$ ($j\to\infty$) and $\tilde{x}^\ast\in\mathcal{M}_\alpha(c )$ such that $\tilde{\gamma}^\ast(\tilde{\tau}_j)\to \tilde{x}^\ast$ ($j\to\infty$)).  It follows from the variational representation formula (\ref{value-func}) that for each $j$,
\begin{eqnarray*}
\tilde{v}(x)-v(x)\le \tilde{v}(\gamma^\ast(\tau_j))-v(\gamma^\ast(\tau_j))\quad\mbox{(resp., $\tilde{v}(x)-v(x)\ge \tilde{v}(\tilde{\gamma}^\ast(\tilde{\tau}_j))-v(\tilde{\gamma}^\ast(\tilde{\tau}_j))$)}.
\end{eqnarray*}  
Since $v(x^\ast)=\tilde{v}(x^\ast)$ (resp., $v(\tilde{x}^\ast)=\tilde{v}(\tilde{x}^\ast)$), we conclude $v(x)\le\tilde{v}(x)$ (resp., $v(x)\ge\tilde{v}(x)$) by letting $j\to\infty$.  
\end{proof}
By Theorem \ref{invariance}, the following sets are well-defined: 
\begin{align*}
&\tilde{\mathcal{M}}_\alpha(v;c):=\{ (x,H_p(c+v_x(x)))\,|\,x\in\mathcal{M}_\alpha(v;c) \},
&\tilde{\mathcal{M}}_\alpha(c):=\bigcup_v\tilde{\mathcal{M}_\alpha}(v;c),\\
&\tilde{\mathcal{M}}_\alpha^\ast(v;c):=\{ (x,c+v_x(x))\,|\,x\in\mathcal{M}_\alpha(v;c) \},
&\tilde{\mathcal{M}}^\ast_\alpha(c):=\bigcup_v\tilde{\mathcal{M}^\ast_\alpha}(v;c),
\end{align*}
%\begin{eqnarray*}
%\tilde{\mathcal{M}}_\alpha(v;c)&:=&\{ (x,H_p(c+v_x(x)))\,|\,x\in\mathcal{M}_\alpha(v;c) \},\\
%\tilde{\mathcal{M}}_\alpha(c)&:=&\bigcup_v\tilde{\mathcal{M}_\alpha}(v;c),\\
%\tilde{\mathcal{M}}_\alpha^\ast(v;c)&:=&\{ (x,c+v_x(x))\,|\,x\in\mathcal{M}_\alpha(v;c) \},\\
%\tilde{\mathcal{M}}^\ast_\alpha(c)&:=&\bigcup_v\tilde{\mathcal{M}^\ast_\alpha}(v;c),
%\end{eqnarray*}
where the union  in the above is taken over all the viscosity solutions of (\ref{HJ}) with fixed $c$. 
%%%%%
\begin{Rem}  The curve $x(s)$ in Theorem \ref{invariance} is a global minimizing curve of $v$, which is not a homoclinic/heteroclinic orbit. The set $\mathcal{M}_\alpha(v;c)$ is equal to the closure of the family of all such global minimizing curves of $v$,  and
\begin{eqnarray*}
\tilde{\mathcal{M}}_\alpha(v;c)&=&\overline{\{ (x^\ast,\xi^\ast)\,|\, \mbox{$(x^\ast,\xi^\ast)$: $\alpha$-limit point of $(\gamma^\ast(s),\gamma^\ast{}'(s))$}}, \\
&&\qquad\qquad\qquad\qquad\qquad\qquad \overline{\mbox{$\gamma^\ast$: minimizing curve of $v$ } \}}.
\end{eqnarray*} 
\end{Rem}
%%%%%%%%%
\noindent Here are properties of $\tilde{\mathcal{M}}_\alpha(v;c)$ and $\tilde{\mathcal{M}}^\ast_\alpha(v;c)$:
%%%%%%%%%
\begin{Thm}\label{similar}
\begin{enumerate}[{\rm(i)}]
\item For each point $x\in\T^n$, there exists $\xi\in\R^n$ such that $\phi^s_L(x,\xi)$ falls into $\tilde{\mathcal{M}}_\alpha(v;c)$ as $s\to-\infty$, i.e., any accumulating point of $\{\phi^s_L(x,\xi)\}_{s\le0}$ belongs to $\tilde{\mathcal{M}}_\alpha(v;c)$. If $v_x(x)$ exists, $\xi=H_p(x,c+v_x(x))$.
\item  $\tilde{\mathcal{M}}_\alpha(v;c)$ is a $\phi^s_L$-invariant subset of $\T^n\times\R^n$, i.e., $\phi^s_L(\tilde{\mathcal{M}}_\alpha(v;c))=\tilde{\mathcal{M}}_\alpha(v;c)$  for all $s\in\R$. 
\item For each point $x\in\T^n$,  there exists $p\in\R^n$ such that $\phi^s_H(x,p)$ falls into $\tilde{\mathcal{M}}^\ast_\alpha(v;c)$ as $s\to-\infty$ along $\graph(c+v_x)$. If $v^\ep_x(x)$ exists, $p=c+v_x^\ep(x)$.
\item $\tilde{\mathcal{M}}^\ast_\alpha(v;c)$ is a $\phi^s_H$-invariant subset of $\graph(c+v_x)$. 
\end{enumerate}
\end{Thm}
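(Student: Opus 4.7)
The plan is to extract everything from Theorem \ref{invariance} together with Tonelli existence of a one-sided global minimizing curve at every $x\in\T^n$, and the Legendre conjugacy $(x,\xi)\leftrightarrow(x,p)$ with $p=L_\xi(x,\xi)$ that intertwines the flows $\phi^s_L$ and $\phi^s_H$. With these in hand, each of the four assertions becomes essentially a translation of what Theorem \ref{invariance} already says.

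For (i), fix $x\in\T^n$ and choose any one-sided global minimizing curve $\gamma^\ast:(-\infty,0]\to\T^n$ with $\gamma^\ast(0)=x$, as produced by Tonelli's theory in the derivation of (\ref{value-func}). Set $\xi:=\gamma^\ast{}'(0)$, so $\phi^s_L(x,\xi)=(\gamma^\ast(s),\gamma^\ast{}'(s))$ for all $s\le 0$. By the definition of $\mathcal{M}_\alpha(v;c)$, every accumulating point of $\{(\gamma^\ast(s),\gamma^\ast{}'(s))\}_{s\le 0}$ has first coordinate in $\mathcal{M}_\alpha(v;c)$, and Theorem \ref{invariance} (ii) identifies its second coordinate as $H_p(x^\ast,c+v_x(x^\ast))$, so the point lies in $\tilde{\mathcal{M}}_\alpha(v;c)$. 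When $v_x(x)$ exists, $\gamma^\ast$ is the unique minimizer and (\ref{nonderivative}) extends to $s=0$, giving $L_\xi(x,\xi)=c+v_x(x)$; Legendre inversion yields $\xi=H_p(x,c+v_x(x))$.

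For (ii), Theorem \ref{invariance} (iii) states that for each $x^\ast\in\mathcal{M}_\alpha(v;c)$ the orbit $(x(s),x'(s))=\phi^s_L(x^\ast,H_p(x^\ast,c+v_x(x^\ast)))$ satisfies $x(s)\in\mathcal{M}_\alpha(v;c)$ and $x'(s)=H_p(x(s),c+v_x(x(s)))$ for every $s\in\R$. The initial and terminal points of this orbit are precisely generic elements of $\tilde{\mathcal{M}}_\alpha(v;c)$, so $\phi^s_L(\tilde{\mathcal{M}}_\alpha(v;c))\subset\tilde{\mathcal{M}}_\alpha(v;c)$ for every $s\in\R$; applying this to $s$ and $-s$ gives equality. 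Parts (iii) and (iv) are obtained by pushing (i)--(ii) through the Legendre transform: take the same minimizing curve and set $p:=L_\xi(x,\gamma^\ast{}'(0))$, so that $\phi^s_H(x,p)=(\gamma^\ast(s),p^\ast(s))$ with $p^\ast(s)=L_\xi(\gamma^\ast(s),\gamma^\ast{}'(s))=c+v_x(\gamma^\ast(s))$ for $s<0$ by (\ref{nonderivative}); hence the orbit stays in $\graph(c+v_x)$ for $s\le 0$, its accumulating points lie in $\tilde{\mathcal{M}}^\ast_\alpha(v;c)$, and $p=c+v_x(x)$ whenever $v_x(x)$ exists. The $\phi^s_H$-invariance in (iv) is then the image of the $\phi^s_L$-invariance in (ii) under the Legendre conjugacy.

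The only point that could look delicate is showing that the accumulating velocities $\gamma^\ast{}'(\tau_j)$ really correspond to values of the form $H_p(x,c+v_x(x))$, rather than arbitrary cluster points; but this is exactly what Theorem \ref{invariance} (i)--(ii) has already established. The work is therefore organizational: Tonelli existence, the formula (\ref{nonderivative}), Theorem \ref{invariance}, and the Legendre conjugacy immediately combine to give all four assertions, so no genuinely new estimate is needed.
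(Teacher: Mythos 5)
Your proposal is correct and follows essentially the same route as the paper's own (very terse) proof: (i) from Tonelli existence plus the definition of $\tilde{\mathcal{M}}_\alpha(v;c)$ and Theorem \ref{invariance} (ii), (ii) from Theorem \ref{invariance} (iii) applied at $s$ and $-s$, and (iii)--(iv) by transporting these through the Legendre conjugacy $\phi^s_H(x,p)=(\pi\circ\phi^s_L(x,\xi),L_\xi(\phi^s_L(x,\xi)))$ together with (\ref{nonderivative}). No gap; your write-up simply spells out what the paper leaves implicit.
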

%%%%%%%%%%%
\begin{proof}
(i) is clear by definition. (ii) is clear by (iii) of Theorem \ref{invariance}. Set $p:=L_\xi(x,\xi)$, $\xi:=\gamma^\ast{}'(0)$ with a minimizing curve $\gamma^\ast$ for $v(x)$. Then we have $\phi^s_H(x,p)=(\pi\circ \phi^s_L(x,\xi), L_\xi(\phi^s_L(x,\xi)))$, 
where $\pi:\T^n\times\R^n\to\T^n$ is the standard projection. 
Hence, (iii) follows from above (i), (\ref{nonderivative}) with equivalence between (\ref{EL}) and (\ref{HS}), and (ii) of Theorem \ref{invariance}. Let $(x^\ast,p^\ast)$ be an arbitrary point of $\tilde{\mathcal{M}}_\alpha^\ast(v;c)$. (iv) follows from  (iii) of Theorem \ref{invariance} with $x'(0)=H_p(x^\ast,p^\ast)$ and equivalence between (\ref{EL}) and (\ref{HS}).
\end{proof}
%%%%%%%%%%%
\subsection{Comparison with Mather set and Aubry set}
In this subsection, we compare $\mathcal{M}_\alpha(c )$, $\tilde{\mathcal{M}}_\alpha(c )$ with the (projected) Mather set  and the (projected) Aubry set defined in weak KAM theory.  

We recall the measure-theoretical aspect of weak KAM theory, which is first investigated in \cite{Mather1}. Let $\gamma^\ast:(-\infty,0]\to\T^n$ be a minimizing curve for $v(x)$. Note that $(\gamma^\ast(s),\gamma^\ast{}'(s))$ is contained in a compact set $K\subset\T^n\times\R^n$ for each $s\le0$, because $\{v^\ep\}_{\ep>0}$ is equi-Lipschitz continuous. Define the linear functional for each $T>0$ 
\begin{eqnarray}\label{holo110}
\Psi_T(\varphi):=\frac{1}{T}\int^0_{-T}\varphi(\gamma^\ast(s),\gamma^\ast{}'(s))ds,\,\,\,\,\varphi\in C_c(\T^n\times\R^n,\R),
\end{eqnarray}
where $C_c(\T^n\times\R^n,\R)$ denotes the family of compactly supported continuous functions. Then, by the Riesz representation theorem, there exists a probability measure $\mu_T$ on $\T^n\times\R^n$ such that 
\begin{eqnarray}\label{holo11}
\Psi_T(\varphi)=\int_{\T^n\times\R^n}\varphi(x,\xi)d\mu_T\,\,\,\mbox{ for all $\varphi\in C_c(\T^n\times\R^n,\R)$}.
\end{eqnarray}
Since the support of $\mu_T$, denoted by $\supp(\mu_T)$, is contained in $K$ for all $T>0$, we have $T_i\to\infty$  as $i\to\infty$ for which $\mu_{T_i}$ converges weakly to a probability measure  $\mu^\ast$. We see that $\mu^\ast$ is $\phi^s_L$-invariant. Furthermore, (\ref{holo11}) with $\varphi(x,\xi)=L(x,\xi)-c\cdot\xi+h(c )$ 
(more precisely, the right-hand-side is re-defined to be $0$ continuously outside $K$ so that it belongs to $C_c(\T^n\times \R^n, \R)$, which does not change anything in regards to a measure supported in $K$)
%more precisely, the right-hand-side is to be defined as $0$ outside $K$) 
yields 
\begin{eqnarray*}     
\int_{\T^n\times\R^n}L(x,\xi)-c\cdot\xi+h(c ) \,\,d\mu_T=\frac{1}{T}(v(\gamma^\ast(0))-v(\gamma^\ast(-T))). 
\end{eqnarray*}
Putting $T=T_i$ and sending $i\to\infty$, we obtain
\begin{eqnarray*}     
\int_{\T^n\times\R^n}L(x,\xi)-c\cdot\xi \,\,d\mu^\ast=-h(c ).
\end{eqnarray*}
We observe the minimizing property of $\mu^\ast$. For any $(x,\xi)\in\T^n\times\R^n$ and $t>\tau>0$, we have 
\begin{eqnarray}\label{22invariant-measure}
v(\pi\circ\phi^t_L(x,\xi))&\le& \int^t_{-\tau} L^c(\phi^s_L(x,\xi))+h(c )\, ds+v(\pi\circ\phi^{-\tau}_L(x,\xi))\\\nonumber
&=&\int^0_{-\tau-t} L^c(\phi^{s+t}_L(x,\xi))+h(c )\,ds+v(\pi\circ\phi^{-\tau}_L(x,\xi)). 
\end{eqnarray}
Then, integrating the inequality with any $\phi^s_L$-invariant probability measure $\mu$ defined on $\T^n\times\R^n$, we have 
$$0\le \int_{\T^n\times\R^n} L^c(x,\xi)+h(c )\,d\mu\Leftrightarrow \int_{\T^n\times\R^n}L(x,\xi)-c\cdot\xi \,\,d\mu\ge-h(c ). $$
Therefore we conclude that $\mu^\ast$ is a minimizing measure for the minimizing problem (M) in Introduction.   
 
Taking $\phi(x,\xi):=\psi_x(x)\cdot\xi$ (in $K$, otherwise re-defined to be $0$ continuously) with $\psi\in C^1(\T^n)$ in (\ref{holo11}), we have 
\begin{eqnarray*}
\int_{\T^n\times\R^n} \psi_x(x)\cdot\xi d\mu_T=\frac{1}{T}\int^0_{-T}\psi_x(\gamma^\ast(s))\cdot\gamma^\ast{}'(s)ds=\frac{1}{T}\{\psi(\gamma^\ast(0))-\psi(\gamma^\ast(-T))\}. 
\end{eqnarray*} 
With $T=T_j$ and $j\to\infty$, we see that $\mu=\mu^\ast$ satisfies  
\begin{eqnarray}\label{holonomic}
\int_{\T^n\times\R^n} \psi_x(x)\cdot\xi \,d\mu=0\mbox{ \,\,\,for all $\psi\in C^1(\T^n)$}.
\end{eqnarray}
A probability measure $\mu$ satisfying (\ref{holonomic}) is said to be \textit{holonomic}. It is proved that the minimizing problem (\ref{p-mather}) within all $\phi^s_L$-invariant measures is equivalent to the one within all holonomic measures \cite{Mane}. 

The (projected) Mather set $\mathcal{M}(c):=\pi\tilde{\mathcal{M}}(c)$, $\tilde{\mathcal{M}}(c)$ is defined as 
$$\tilde{\mathcal{M}}(c):=\overline{\bigcup_{\mu} \supp(\mu)},$$
where the union is taken over  all minimizing measures of (M).  As shown in \cite{Fathi-book}, \cite{EG},  for each point $(x,\xi)\in \tilde{\mathcal{M}}(c)$, any viscosity solution $v$ of (\ref{HJ}) is differentiable at $x$ possessing the common derivative 
\begin{eqnarray}\label{mather-deri}
v_x(x)=L_\xi(x,\xi)-c.
\end{eqnarray} 
In fact,  since (\ref{22invariant-measure})  holds for each $(x,\xi)\in\T^n\times\R^n$, it must be an equality on the support of each minimizing measure $\mu$, because otherwise our integration with $\mu$ yields $0<\int_{\T^n\times\R^n}L^c(x,\xi)d\mu+h(c)$. Continuity implies that  (\ref{22invariant-measure}) is an equality for each $(x,\xi)\in\tilde{\mathcal{M}}(c)$. Therefore $\pi\circ\phi_L^{\cdot+t}(x,\xi):[-\tau-t,0]\to\T^n$ yields a minimizing curve for $v(\phi_L^t(x,\xi))$ and $v$ is differentiable at $\pi\circ\phi_L^0(x,\xi)=(x,\xi)$ with (\ref{mather-deri}).     

The (projected) Aubry set  $\mathcal{A}(c ):=\pi\tilde{\mathcal{A}}(c )$, $\tilde{\mathcal{A}}(c )$ is defined through conjugate pairs of weak KAM solutions (see \cite{Fathi-book} for detail). By definition, for each point $(x,\xi)\in \tilde{\mathcal{A}}(c )$, any viscosity solution $v$ of (\ref{HJ}) is differentiable at $x$ possessing the common derivative  (\ref{mather-deri}). The following characterization of the projected Aubry set \cite{Fathi-book} is useful:
\begin{eqnarray*}
&&\mathcal{A}(c )=\{x\in\T^n\,|\,\mathfrak{h}^c(x,x)=0\},\\
&&\mathfrak{h}^c(x,y):=\liminf_{T\to\infty}\inf_{\gamma\in AC,\gamma(0)=x,\gamma(-T)=y}\int^0_{-T}L(\gamma(s),\gamma'(s))-c\cdot\gamma'(s)+h(c ) \,ds,
\end{eqnarray*}
where $\mathfrak{h}^c$ is the Peierls barrier. 
%%%%%%
%%%%%%%%%%%
\begin{Thm}\label{comparison} 
\begin{enumerate}[{\rm(i)}]
\item Let  $\mu$ be any minimizing measure of {\rm(M)}. Then, every point $(x,\xi)\in \supp(\mu)$ itself is an $\alpha$-limit point of $\phi^s_L(x,\xi)$. 
\item $\mathcal{M}(c )\subset\mathcal{M}_\alpha(c )\subset\T^n,\,\,\,\tilde{\mathcal{M}}(c )\subset\tilde{\mathcal{M}}_\alpha(c )\subset\T^n\times\R^n.$
\item $\mathcal{M}_\alpha(c )\subset\mathcal{A}(c )\subset\T^n,\,\,\,\tilde{\mathcal{M}}_\alpha(c )\subset\tilde{\mathcal{A}}(c )\subset\T^n\times\R^n.$
\end{enumerate}
\end{Thm}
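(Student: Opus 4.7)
My plan handles the three claims in order: part (i) supplies the dynamical content by Poincar\'e recurrence, part (ii) translates it via the Mather derivative formula, and part (iii) produces the inclusion into the Aubry set through a loop-concatenation estimate on the Peierls barrier.

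For part (i), the main ingredients are that $\mu$ is a $\phi^s_L$-invariant Borel probability measure whose compact support $\supp(\mu)$ is itself $\phi^s_L$-invariant (by invariance of $\mu$) and carries a finite invariant measure. Poincar\'e's recurrence theorem applied to the backward flow yields $\mu$-a.e.\ $\alpha$-recurrence on $\supp(\mu)$, and this set of recurrent points is dense in $\supp(\mu)$ by the definition of topological support. The main obstacle I expect is upgrading this $\mu$-almost-everywhere statement to the pointwise claim for every $(x,\xi)\in\supp(\mu)$; I envisage doing this by decomposing $\mu$ into ergodic components and combining Birkhoff's ergodic theorem for each component with the continuity of $\phi^s_L$ on compact time windows and the Lipschitz-graph structure of $\supp(\mu)$ from Mather's graph theorem, producing the desired sequence $\tau_j\to-\infty$ with $\phi^{\tau_j}_L(x,\xi)\to(x,\xi)$ by diagonal approximation.

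For part (ii), I would combine (i) with the identity $v_x(x)=L_\xi(x,\xi)-c$ valid at every $(x,\xi)\in\supp(\mu)$ for every viscosity solution $v$, as established in the discussion of (\ref{mather-deri}), together with the consequence that $s\mapsto\pi\circ\phi^s_L(x,\xi)$ for $s\le 0$ is a minimizing curve for $v(x)$. By (i), the backward orbit of $(x,\xi)$ accumulates at $(x,\xi)$ itself, so $x$ is an $\alpha$-limit point of this minimizing curve, giving $x\in\mathcal{M}_\alpha(v;c)\subset\mathcal{M}_\alpha(c)$. Taking the union over all minimizing measures $\mu$ and all viscosity solutions, together with the closedness of $\mathcal{M}_\alpha(c)$ built into its definition, yields $\mathcal{M}(c)\subset\mathcal{M}_\alpha(c)$. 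The lifted inclusion $\tilde{\mathcal{M}}(c)\subset\tilde{\mathcal{M}}_\alpha(c)$ follows by the same argument in $\T^n\times\R^n$ combined with Theorem \ref{invariance}(ii), which identifies the fibre of an $\alpha$-limit point as $\xi=H_p(x,c+v_x(x))$, matching the lift used to define $\tilde{\mathcal{M}}_\alpha(v;c)$.

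For part (iii), I would use the Peierls-barrier characterization $\mathcal{A}(c)=\{x:\mathfrak{h}^c(x,x)=0\}$ already recalled in the text. Fix $x\in\mathcal{M}_\alpha(v;c)$ with a minimizing curve $\gamma^\ast$ of $v$ satisfying $\gamma^\ast(\tau_j)\to x$ for some $\tau_j\to-\infty$. For each pair $\tau_j<\tau_k<0$, I would build a closed curve based at $x$ of total time $T_{jk}:=\tau_k-\tau_j+2\delta$ by concatenating a short cap of duration $\delta$ joining $x$ to $\gamma^\ast(\tau_j)$, the minimizing arc $\gamma^\ast|_{[\tau_j,\tau_k]}$, and a short cap of duration $\delta$ joining $\gamma^\ast(\tau_k)$ back to $x$. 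The action on the middle arc equals $v(\gamma^\ast(\tau_k))-v(\gamma^\ast(\tau_j))$ by (\ref{value-func}) and vanishes by continuity of $v$ at $x$, while each short cap contributes an action of order $O(\delta)+O(\delta^{-1}|\gamma^\ast(\tau_\ast)-x|^2)$ that can be made arbitrarily small by choosing $\delta$ comparable to the vanishing distance $|\gamma^\ast(\tau_\ast)-x|$. Letting $j,k\to\infty$ with $T_{jk}\to\infty$ yields $\mathfrak{h}^c(x,x)\le 0$; the reverse inequality is immediate from applying (\ref{value-func}) to $v$ along any loop based at $x$. The lifted inclusion $\tilde{\mathcal{M}}_\alpha(c)\subset\tilde{\mathcal{A}}(c)$ is then automatic, since the fibre over each $x\in\mathcal{A}(c)$ is determined by the common derivative formula, in combination with Theorem \ref{invariance}(i)--(ii).
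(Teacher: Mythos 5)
Your parts (ii) and (iii) are essentially correct and close to the paper's own route. For (ii), note that you do not actually need the full strength of (i): the $\mu$-a.e.\ recurrence from Poincar\'e's theorem already makes the recurrent points dense in $\supp(\mu)$, and the closedness of $\mathcal{M}_\alpha(v;c)$ together with the identity $v_x(x)=L_\xi(x,\xi)-c$ on $\supp(\mu)$ then gives $\tilde{\mathcal{M}}(c)\subset\tilde{\mathcal{M}}_\alpha(c)$ exactly as the paper observes. Your two-cap loop for (iii) is a harmless variant of the paper's construction (the paper attaches a single cap, estimates $\mathfrak{h}^c(\gamma(-T_i),x)\le v(\gamma(-T_i))-v(x)$, and then uses continuity of $\mathfrak{h}^c$ to let $\gamma(-T_i)\to x$; you close the loop at $x$ directly), and the reverse inequality $\mathfrak{h}^c(x,x)\ge0$ is identical in both.

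The genuine gap is in part (i), exactly at the step you yourself flag as the main obstacle. Poincar\'e's theorem yields a set of backward-recurrent points that is dense and of full measure in $\supp(\mu)$, but none of the tools you list promotes this to \emph{every} point. A diagonal argument fails because transferring $\phi^{\tau_{i,j}}_L(x_i,\xi_i)\approx(x_i,\xi_i)$ to $\phi^{\tau_{i,j}}_L(x^\ast,\xi^\ast)\approx(x^\ast,\xi^\ast)$ would require continuity of the flow uniformly over the unbounded return times $\tau_{i,j}\to-\infty$, which is false in general (nearby orbits diverge); continuity on compact time windows is useless here precisely because recurrence times are unbounded. Ergodic decomposition plus Birkhoff at best shows that a point of $\supp(\mu_e)$ is an $\alpha$-limit point of $\mu_e$-a.e.\ \emph{other} orbit, which is a weaker statement than recurrence of the point under its own orbit, and it still only reaches points lying in the support of a single ergodic component. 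Indeed, for a general invariant measure it is simply not true that every point of the support is recurrent (think of a sequence of periodic orbits accumulating on a non-recurrent orbit), so any correct proof must exploit the minimizing structure, and Mather's graph theorem alone does not supply the missing ingredient. The paper's argument is a proof by contradiction: if $(x^\ast,\xi^\ast)\in\supp(\mu)$ were not an $\alpha$-limit point of its own backward orbit, one finds a closed ball $B^\ast_\delta$ such that the backward orbit of \emph{every} point of $\supp(\mu)\cap B^\ast_\delta$ eventually leaves $B^\ast_\delta$ and never returns --- this step uses that $\alpha$-limit sets of these orbits lie in $\supp(\mu)\cap\tilde{\mathcal{M}}_\alpha(c)$, the closedness of $\tilde{\mathcal{M}}_\alpha(c)$, and the invariance statement of Theorem \ref{invariance}(iii), with a case distinction according to whether $B^\ast_\delta$ meets $\supp(\mu)$ outside the orbit of $(x^\ast,\xi^\ast)$ --- and then the $\phi^s_L$-invariance of $\mu$ forces $\int_{\T^n\times\R^n}\varphi\,d\mu=\lim_{T\to\infty}\int_{\T^n\times\R^n}\bigl(\frac{1}{T}\int^0_{-T}\varphi(\phi^s_L(x,\xi))\,ds\bigr)d\mu=0$ for a bump function $\varphi>0$ near $(x^\ast,\xi^\ast)$, a contradiction. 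You would need to replace your upgrading step by an argument of this kind.
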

%%%%%%%%%%%%%
\begin{proof}
It follows from Poincar\'e's recurrence theorem that $\mu$-a.e. points of $\supp(\mu)$ are recurrent, which yields (ii). A slightly more detailed argument is necessary to prove that {\it every} point is recurrent. We show a direct proof of (i) and (ii).   

Let $\mu$ be any minimizing measure of (M). As we already observed, for any $(x,\xi)\in\T^n\times \R^n$, we have (\ref{22invariant-measure}), which must be an equality on $\supp(\mu)$. Hence,  for each $(x,\xi)\in\supp(\mu)$, the curve $\pi\circ\phi^s_L(x,\xi):[-T,0]\to\T^n$ is the unique minimizing curve for $v(x)$. 

Suppose that there exists a minimizing measure $\mu$ of (M) for which we have a point $(x^\ast,\xi^\ast)\in \supp(\mu)\setminus \tilde{\mathcal{M}}_\alpha(c)$.  Then, $(x^\ast,\xi^\ast)$ cannot be any $\alpha$-limit point of $\phi^s_L(x^\ast,\xi^\ast)$. Therefore, we have a closed $\delta$-ball $B^\ast_\delta$ of $(x^\ast,\xi^\ast)$ to which $\phi^s_L(x^\ast,\xi^\ast)$ never comes back for $s\to-\infty$. The following two cases are possible: 
\begin{enumerate}
\item[(a)] Taking smaller $\delta>0$ if necessary, the set $B^\ast_\delta\setminus\{ \phi^s_L(x^\ast,\xi^\ast) \}_{s\in\R}$ contains no other points of $\supp(\mu)$.
\item [(b)] There exists a sequence $(x_i,\xi_i)\in[B^\ast_\delta\cap\supp(\mu)]\setminus \{ \phi^s_L(x^\ast,\xi^\ast) \}_{s\in\R}$ such that $(x_i,\xi_i)\to(x^\ast,\xi^\ast)$ as $i\to\infty$.
\end{enumerate}
\indent 
In case (a), taking a continuous function $\varphi:\T^n\times\R^n\to\R$ such that $\supp (\varphi)\subset B^\ast_\delta$ and $\varphi>0$ in $\supp(\varphi)$, we have for each $T>0$,
\begin{align}
&\int_{\T^n\times\R^n}\left\{  \frac{1}{T}\int^0_{-T}\varphi(\phi^s_L(x,\xi)) ds \right\}\,d\mu= 
\frac{1}{T}\int^0_{-T} \left\{ \int_{\T^n\times\R^n}\varphi(\phi^s_L(x,\xi))d\mu\right\}\,ds
\label{12345}\\
=&\, 
\frac{1}{T}\int^0_{-T} \left\{ \int_{\T^n\times\R^n}\varphi(\phi^0_L(x,\xi))d\mu\right\}\,ds
=
\int_{\T^n\times\R^n}\varphi(x,\xi)d\mu>0. 
\nonumber
\end{align}
Since $\phi^s_L(x,\xi)$ with $(x,\xi)\in\supp(\mu)\cap B^\ast_\delta$ never comes back to $B^\ast_\delta$ for $s\to-\infty$, we have $ \frac{1}{T}\int^0_{-T}\varphi(\phi^s_L(x,\xi)) ds \to0$ as $T\to\infty$ pointwise on $\supp(\mu)\cap B^\ast_\delta$ and hence, the left-hand-side of (\ref{12345}) is to be $0$, which is a contradiction. 

In case (b), we cannot find any subsequence of $(x_i,\xi_i)$, still denoted by  $(x_i,\xi_i)$, such that  $(x_i,\xi_i)\in \tilde{\mathcal{M}}_\alpha(c)$, because otherwise $(x^\ast,\xi^\ast)$ must belong to $\tilde{\mathcal{M}}_\alpha(c)$. Hence, taking smaller $\delta>0$ if necessary, all points of  $[B^\ast_\delta\cap\supp(\mu)]\setminus \{ \phi^s_L(x^\ast,\xi^\ast) \}_{s\in\R}$ do not belong to $\tilde{\mathcal{M}}_\alpha(c)$. Note that if $(x,\xi)\in\supp(\mu)$, the $\alpha$-limit points of $\phi^s_L(x,\xi)$ belong to $\supp(\mu)$.  Therefore, for each $(x,\xi)\in  [B^\ast_\delta\cap\supp(\mu)]\setminus \{ \phi^s_L(x^\ast,\xi^\ast) \}_{s\in\R}$, the $\alpha$-limit points of $\phi^s_L(x,\xi)$  cannot belong to $[B^\ast_\delta\cap\supp(\mu)]\setminus \{ \phi^s_L(x^\ast,\xi^\ast) \}_{s\in\R}$, nor to  $B^\ast_\delta\cap \{ \phi^s_L(x^\ast,\xi^\ast) \}_{s\in\R}$ because otherwise  $\{ \phi^s_L(x^\ast,\xi^\ast) \}_{s\in\R} \subset \tilde{\mathcal{M}}_\alpha(c)$ due to (iii) of Theorem \ref{invariance}. Thus, since $\phi^s_L(x,\xi)$ with $(x,\xi)\in\supp(\mu)\cap B^\ast_\delta$ never comes back to $B^\ast_\delta$ for $s\to-\infty$, the same argument with (\ref{12345}) as that in the case of (a) yields a contradiction.

We prove (iii). It is enough to show  $\mathcal{M}_\alpha(c )\subset\mathcal{A}(c )$, because if $x\in \mathcal{M}_\alpha(c )\cap\mathcal{A}(c )$, we have a viscosity solution $v$ for which $(x,\xi)=(x,H_p(x,c+v_x(x)))\in\tilde{\mathcal{M}}_\alpha(v;c )$ 
and hence $(x,\xi)\in\tilde{\mathcal{A}}(c )$ due to (\ref{mather-deri}) on $\tilde{\mathcal{A}}(c )$ for any viscosity solution.  
Let $x$ be an arbitrary point of $\mathcal{M}_\alpha(c )$ for which we have a minimizing curve $\gamma:(-\infty,0]\to\T^n$ and $T_i\to\infty$ such that $\gamma(-T_i)\to x$ as $i\to\infty$. Then, it holds that 
\begin{eqnarray*}
v(\gamma(-T_i))&=&\int^{-T_i}_{-T_{i+j}}L(\gamma(s),\gamma'(s))-c\cdot\gamma'(s) +h(c )\, ds +v(\gamma(-T_{i+j}))
\end{eqnarray*}
or, with $\gamma_i(s):=\gamma(s-T_i):[-T_{i+j}+T_i,0]\to\T^n$,
\begin{eqnarray*}
v(\gamma_i(0))&=&\int^0_{-T_{i+j}+T_i}L(\gamma_i(s),\gamma_i'(s))-c\cdot\gamma_i'(s)+h(c )\, ds+v(\gamma_i(-T_{i+j}+T_i)).
\end{eqnarray*}
Define 
\begin{eqnarray*}
\gamma_{ij}(s):=\left\{
\begin{array}{l}
\gamma_i(s)\,\,\,\,\,\,\,\text{for} \ s\in[-T_{i+j}+T_i,0],  \\
\dis\gamma_i(-T_{i+j}+T_i)+\frac{\gamma_i(-T_{i+j}+T_i)-x}{\ep_{ij}} \{s-(-T_{i+j}+T_i)\,\\
\qquad\qquad\qquad\qquad\qquad\qquad\,\,\,\,\,\,\,\text{for}s\in[-T_{i+j}+T_i-\ep_{ij},-T_{i+j}+T_i],
\end{array}
\right.
\end{eqnarray*}
where $\ep_{ij}:=\sqrt{|\gamma_i(-T_{i+j}+T_i)-x|}$. Then we see that 
\begin{eqnarray*}
\mathfrak{h}^c(\gamma_i(0),x)&\le& \liminf_{j\to\infty} \int^0_{-T_{i+j}+T_i-\ep_{ij}}L(\gamma_{ij}(s),\gamma_{ij}'(s))-c\cdot\gamma_{ij}'(s)+h(c )\, ds\\
&=&\liminf_{j\to\infty} v(\gamma_i(0))-v(\gamma_i(-T_{i+j}+T_i))+O(\ep_{ij})
=v(\gamma(-T_i))-v(x).
\end{eqnarray*}
Since $\gamma(-T_i)\to x$ as $i\to\infty$ and continuity of $\mathfrak{h}^c$, we have $\mathfrak{h}^c(x,x)\le0$.
On the other hand, for a minimizing curve $\gamma^\ast$ of the variational problem 
$$\inf_{\gamma\in AC,\gamma(0)=x,\gamma(-T)=x}\int^0_{-T}L(\gamma(s),\gamma'(s))-c\cdot\gamma'(s)+h(c ) \,ds,$$
we have 
$$ \int^0_{-T}L(\gamma^\ast(s),\gamma^\ast{}'(s))-c\cdot\gamma^\ast{}'(s)+h(c ) \,ds\ge v(\gamma^\ast(0))-v(\gamma^\ast(-T))=0$$
for each $T>0$, which implies $\mathfrak{h}^c(x,x)\ge0$. Hence, we obtain $\mathfrak{h}^c(x,x)=0$ and therefore $x\in \mathcal{A}(c)$. Continuity of $\mathfrak{h}^c$ implies $\mathfrak{h}^c(x,x)=0$ also for accumulating points of $\mathcal{M}_\alpha(c)$. Thus, we conclude $\mathcal{M}_\alpha(c)\subset \mathcal{A}(c)$, $\tilde{\mathcal{M}}_\alpha(c)\subset \tilde{\mathcal{A}}(c)$.
\end{proof}
%%%%%%%%%
Since any viscosity solution of (\ref{HJ}) is differentiable on the projected Aubry set, we have
%%%%%%%%%%%
\begin{Cor}\label{2626}
 Any viscosity solution $v$ of (\ref{HJ}) is differentiable on $\mathcal{M}_\alpha(c)$ with the common derivative $v_x(x)=L_\xi(x,\xi)-c$ for each point $(x,\xi)\in\tilde{\mathcal{M}}_\alpha(c)$.
\end{Cor}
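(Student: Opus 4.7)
The plan is to obtain this as a direct consequence of part (iii) of Theorem \ref{comparison} combined with the standard characterization of the projected Aubry set recalled in equation (\ref{mather-deri}). No fresh dynamical or variational work is needed; everything has already been set up.

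First, I would invoke Theorem \ref{comparison}(iii), which gives the inclusions
\[
\mathcal{M}_\alpha(c)\subset \mathcal{A}(c),\qquad \tilde{\mathcal{M}}_\alpha(c)\subset \tilde{\mathcal{A}}(c).
\]
Second, as recalled just before the statement of Theorem \ref{comparison}, the defining property of the Aubry set is that any viscosity solution $v$ of (\ref{HJ}) is differentiable at every $x\in\mathcal{A}(c)$, and for each $(x,\xi)\in \tilde{\mathcal{A}}(c)$ the derivative is given by the common formula $v_x(x)=L_\xi(x,\xi)-c$, independently of the choice of $v$. Restricting these two statements along the inclusions above immediately yields differentiability of every viscosity solution on $\mathcal{M}_\alpha(c)$ together with the asserted formula on $\tilde{\mathcal{M}}_\alpha(c)$.

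A small point worth noting is the closure in the definition of $\mathcal{M}_\alpha(c)$: one should check that accumulation points are also covered. This is automatic because $\mathcal{A}(c)$ is closed (being the zero set of the continuous Peierls barrier $x\mapsto \mathfrak{h}^c(x,x)$), so the closure of $\bigcup_v \mathcal{M}_\alpha(v;c)$ still lies in $\mathcal{A}(c)$; the same remark applies in the tangent/cotangent variable for $\tilde{\mathcal{M}}_\alpha(c)\subset \tilde{\mathcal{A}}(c)$.

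There is no genuine obstacle here, since all the substantive content has already been established: the nontrivial inclusion $\mathcal{M}_\alpha(c)\subset \mathcal{A}(c)$ was proved in Theorem \ref{comparison}(iii) via the Peierls barrier argument, and the universality of the derivative on the Aubry set is a classical fact from weak KAM theory. The corollary is simply the transparent bundling of these two results.
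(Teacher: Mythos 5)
Your proposal matches the paper's own derivation: the corollary is stated there as an immediate consequence of Theorem \ref{comparison}(iii) together with the fact, recalled via \eqref{mather-deri}, that every viscosity solution is differentiable on the projected Aubry set with the common derivative. Your extra remark about accumulation points being absorbed because $\mathcal{A}(c)$ is closed is a sensible (and correct) bit of added care, but the argument is essentially identical to the paper's.
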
 
%%%%%%%%%%
\noindent This is apparently not trivial from the viscosity-solution-wise definition of $\mathcal{M}_\alpha(c)$, $\tilde{\mathcal{M}}_\alpha(c)$. 
%%%%%%%%%
\begin{Rem}\label{2727}
\begin{enumerate}[{\rm(i)}]
\item  $\mathcal{M}_\alpha(c )$ can be strictly smaller than $\mathcal{A}(c )$ in general. 
\item Currently it is not clear if $\mathcal{M}(c )$ is  strictly smaller than $\mathcal{M}_\alpha(c )$ in general or not.
\end{enumerate}
\end{Rem}
%%%%%%%%%%
\noindent In fact, (i) of Remark \ref{2727} is the case, when there are homoclinic/heteroclinic orbits. For instance, suppose that Hamiltonian dynamics with $n=1$ has a single stationary solution $x_0$ with a homoclinic orbit $\gamma^\ast$ moving over $\T\setminus\{x_0\}$. There exists a value $c$ such that $\graph(c+v_x)=\{ (\gamma^\ast(s),c+v_x(\gamma^\ast(s))) \}_{s\in\R}$. Then, $\mathcal{A}(c )$ is equal to  $\{\gamma^\ast(s)\}_{s\in\R}\cup\{x_0\}=\T$, and $\mathcal{M}_\alpha(c )$ is equal to $\{x_0\}$. 

In regards to (ii) of Remark \ref{2727}, one could expect the case where $\mathcal{M}(c )$ is strictly smaller than $\mathcal{M}_\alpha(c )$ because of the following observation. Let $\gamma^\ast(s)$ be a minimizing curve and $(x^\ast,\xi^\ast)$ be an $\alpha$-limit point of $(\gamma^\ast(s),\gamma^\ast{}'(s))$. Let $\tau(T)$ denote the total length of time for which $(\gamma^\ast(s),\gamma^\ast{}'(s))$ stays in a $\delta$-ball of  $(x^\ast,\xi^\ast)$ within $s\in[-T,0]$. It seems that $r(\delta):=\lim_{T\to\infty}\tau(T)/T$ determines whether or not $(x^\ast,\xi^\ast)$ is a point of the support of the minimizing measure induced by (\ref{holo110}) and (\ref{holo11}), namely, $(x^\ast,\xi^\ast)$ would not belong to the support,  if $r(\delta)=0$ for some $\delta>0$.     
%%%%%%%%%%%%%%%%%%%%%%%%%%%%
%%%%%%%%%%%%%%%%%%%%%%%%%%%%
%%%%%%%%%%%%%%%%%%%%%%%%%%%%
%%%%%%%%%%%%%%%%%%%%%%%%%%%%
\setcounter{section}{2}
\setcounter{equation}{0}
\section{Weak KAM theory for discount problem}
We extend what we observed in Section 2 to the discount problem. The family of $\alpha$-limit points of minimizing curves has properties similar  to those of $\mathcal{M}_\alpha(c)$, $\tilde{\mathcal{M}}_\alpha(c)$. We present certain minimizing measures to relate them to the family of $\alpha$-limit points.  These minimizing measures are different from the ones used in \cite{Davini}, \cite{MT} for the selection criterion in the vanishing discount process.   
%%%%%%%%%%%%%
\subsection{Viscosity solution and minimizing curve}  
Let $v^\ep$ be the unique viscosity solution of (\ref{dHJ}). Note that $v^\ep$ is Lipschitz continuous and semiconcave. It is well-known that $v^\ep$ satisfies for each $x\in\T^n$ and $T>0$,
\begin{eqnarray}\label{d-value-func}
v^\ep(x)&=&\inf_{\gamma\in AC,\gamma(0)=x} \Big\{\int^0_{-T}e^{\ep s}  (L(\gamma(s),\gamma'(s))-c\cdot\gamma'(s)+h(c )  )ds \\\nonumber 
&&+ e^{-\ep T}v^\ep(\gamma(-T))\Big\}.
\end{eqnarray} 
It follows from Tonelli's theory that there exists a minimizing curve $\gamma^\ep:[-T,0]\to\T^n$ for $v^\ep(x)$, which is a $C^2$-solution of the Euler-Lagrange equation generated by $L^{c,\ep}:=e^{\ep s}\{L-c\cdot\xi+h(c )\}$,
\begin{eqnarray} \label{dEL}
&&\frac{d}{ds} \{ L^{c,\ep}_\xi(x(s),x'(s)) \}= L^{c,\ep}_x(x(s),x'(s))\\\nonumber
\Leftrightarrow
&& \frac{d}{ds}\{ L_\xi(x(s),x'(s)) \}=L_x(x(s),x'(s))-\ep L_\xi(x(s),x'(s))+\ep c. 
\end{eqnarray}
The flow of (\ref{dEL}) is denoted by $\phi^s_{L,c,\ep}$, i.e., $\phi^s_{L,c,\ep}(x(0),x'(0))=(x(s),x'(s))$. Set $p^\ep(s):=L_\xi(\gamma^\ep(s),\gamma^\ep{}'(s))$. Then $(\gamma^\ep(s),p^\ep(s))$ solves the equation,
\begin{eqnarray}\label{dHS}
\left\{
\begin{array}{l}
x'(s)=H_p(x(s),p(s)),\\
p'(s)=-H_x(x(s),p(s))+\ep c- \ep p(s). 
\end{array}
\right. 
\end{eqnarray}
%\begin{eqnarray}\label{dHS}
%x'(s)=H_p(x(s),p(s)),\quad p'(s)=-H_x(x(s),p(s))+\ep c- \ep p(s).
%\end{eqnarray}
We see that (\ref{dEL}), (\ref{dHS}) can be regarded as (\ref{EL}), (\ref{HS}) with a friction term.  The flow of (\ref{dHS}) is denoted by $\phi^s_{H,c,\ep}$.
%%%%%%%%%
\begin{Prop}\label{programing}
Let $\gamma^\ep:[-T,0]\to\T^n$ be a minimizing curve for $v^\ep(x)$. For each $\tau\in[0,T)$,  we have 
\begin{align*}
&v^\ep(\gamma^\ep(-\tau))\\
=&\, e^{\ep \tau}\Big\{\int^{-\tau}_{-T}e^{\ep s}  (L(\gamma^\ep(s),\gamma^\ep{}'(s))-c\cdot\gamma^\ep{}'(s)+h(c )  )ds 
+ e^{-\ep T}v^\ep(\gamma^\ep(-T))\Big\}\\
=&\,\int^0_{-T+\tau}e^{\ep s}  (L(\gamma^\ep(s-\tau),\gamma^\ep{}'(s-\tau))-c\cdot\gamma^\ep{}'(s-\tau)+h(c )  )ds 
+ e^{\ep (-T+\tau)}v^\ep(\gamma^\ep(-T)).
\end{align*} 
%\begin{eqnarray*}
%v^\ep(\gamma^\ep(-\tau))&=&e^{\ep \tau}\Big\{\int^{-\tau}_{-T}e^{\ep s}  (L(\gamma^\ep(s),\gamma^\ep{}'(s))-c\cdot\gamma^\ep{}'(s)+h(c )  )ds \\
%&&+ e^{-\ep T}v^\ep(\gamma^\ep(-T))\Big\}\\
%&=&\int^0_{-T+\tau}e^{\ep s}  (L(\gamma^\ep(s-\tau),\gamma^\ep{}'(s-\tau))-c\cdot\gamma^\ep{}'(s-\tau)+h(c )  )ds \\\nonumber
%&&+ e^{\ep (-T+\tau)}v^\ep(\gamma^\ep(-T)).
%\end{eqnarray*} 
\end{Prop}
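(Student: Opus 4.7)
The plan is to establish the proposition as a dynamic programming identity obtained by splitting the minimizing curve $\gamma^\ep$ at the intermediate time $-\tau$. Write $A$ for the right-hand side of the first displayed equality. The goal reduces to proving $v^\ep(\gamma^\ep(-\tau))\le A$ and $A\le v^\ep(\gamma^\ep(-\tau))$ separately; the second equality in the statement is then only the change of integration variable $u=s-\tau$ combined with the identities $e^{\ep s}=e^{\ep\tau}e^{\ep u}$ and $e^{-\ep T}e^{\ep\tau}=e^{-\ep(T-\tau)}$, which I would verify briefly but not belabor.

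For the upper bound, I would plug the translated curve $\tilde\gamma(s):=\gamma^\ep(s-\tau)$ on $[-(T-\tau),0]$ into the variational representation \eqref{d-value-func} written at the base point $\gamma^\ep(-\tau)$ with horizon $T-\tau$. Since $\tilde\gamma(0)=\gamma^\ep(-\tau)$ and $\tilde\gamma(-(T-\tau))=\gamma^\ep(-T)$, performing the above change of variables immediately yields $v^\ep(\gamma^\ep(-\tau))\le A$, which is exactly the second displayed expression.

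For the reverse inequality I would use a concatenation argument. Given any competitor $\eta\in AC$ defined on $[-(T-\tau),0]$ with $\eta(0)=\gamma^\ep(-\tau)$, I construct $\hat\gamma$ on $[-T,0]$ by gluing $\gamma^\ep|_{[-\tau,0]}$ with the shift $\eta(\cdot+\tau)$ on $[-T,-\tau]$; the result is absolutely continuous and satisfies $\hat\gamma(0)=x$. The minimizing property of $\gamma^\ep$ in \eqref{d-value-func} at $x$ compares its total cost favorably against that of $\hat\gamma$ over $[-T,0]$, and the pieces on $[-\tau,0]$ cancel between the two sides. Performing the translation $u=s+\tau$ in the remaining integral on the right and recognizing $e^{-\ep\tau}A$ on the left transforms the inequality into $A\le \int_{-(T-\tau)}^0 e^{\ep u}(L(\eta,\eta')-c\cdot\eta'+h(c))\,du+e^{-\ep(T-\tau)}v^\ep(\eta(-(T-\tau)))$. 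Taking the infimum over $\eta$ and invoking \eqref{d-value-func} at $\gamma^\ep(-\tau)$ with horizon $T-\tau$ yields $A\le v^\ep(\gamma^\ep(-\tau))$, completing the proof.

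There is no substantive obstacle: the only delicate part is the careful bookkeeping of the exponential discount factor under the time translation $s\mapsto s\pm\tau$, which reduces everything to a routine algebraic verification. This is essentially the standard dynamic programming principle adapted to the exponentially discounted value function \eqref{d-value-func}.
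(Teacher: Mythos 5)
Your proof is correct: the two-sided argument (restricting the tail of the minimizer as a competitor for the upper bound, and concatenating an arbitrary competitor with the head of the minimizer for the lower bound) is the standard proof of the dynamic programming principle, and the exponential bookkeeping under the shift $s\mapsto s\pm\tau$ checks out. The paper gives no proof at all, merely remarking that the proposition is the well-known dynamic programming principle for the discounted value function \eqref{d-value-func}, so your write-up simply supplies the standard argument the authors had in mind.
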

%%%%%%%%
\noindent This is well-known as a dynamic programing principle for the discount value function (\ref{d-value-func}).
It follows from Proposition \ref{programing} that  we have  
\begin{eqnarray}\label{value6}
v^\ep(x)&=&\int^0_{-\tau}e^{\ep s} (L(\gamma^\ep(s),\gamma^\ep{}'(s))-c\cdot\gamma^\ep{}'(s)+h(c )  )ds\\\nonumber
&&\qquad\qquad\qquad +e^{-\ep \tau}v^\ep(\gamma^\ep(-\tau))\mbox{\,\,\,\, for each $\tau\in[0,T]$}.
\end{eqnarray}
%%%%%%%%%%%%%
\begin{Prop}\label{differentiable}
Let $\gamma^\ep:[-T,0]\to\T^n$ be a minimizing curve for $v^\ep(x)$. Then $v^\ep$ is differentiable on $\gamma^\ep(s)$ for all $s\in[-T,0)$ satisfying  
$$v^\ep_x(\gamma^\ep(s))=L_\xi(\gamma^\ep(s),\gamma^\ep{}'(s))-c.$$
If $v^\ep_x(x)$ exists, this holds for $s=0$ and  $\gamma^\ep$ is the unique minimizing curve for $v^\ep(x)$.
\end{Prop}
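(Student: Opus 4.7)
The plan is to exploit a two-sided structure at interior points $y:=\gamma^\ep(s)$ for $s\in[-T,0)$: by Proposition \ref{programing}, the shifted curve $\sigma\mapsto\gamma^\ep(\sigma+s)$ on $[-T-s,0]$ is itself a minimizer for $v^\ep(y)$, while the segment $\gamma^\ep|_{[s,0]}$ provides a forward extension of positive length $|s|$. I would use the backward minimizer to bound the super-differential of $v^\ep$ at $y$, and the forward extension to produce a matching sub-differential bound; since $v^\ep$ is semiconcave, differentiability then follows automatically.

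For the super-differential at $y$, I would insert into (\ref{d-value-func}) a competitor built from the backward minimizer by linearly interpolating the last $\delta$-segment toward $y+h$, namely replacing $\gamma^\ep(\cdot+s)$ on $[-\delta,0]$ by $\gamma^\ep(\cdot+s)+\tfrac{\sigma+\delta}{\delta}h$. Taylor-expanding the Lagrangian in $\delta$ and sending $\delta\to 0+$ yields $v^\ep(y+h)-v^\ep(y)\le [L_\xi(y,\gamma^\ep{}'(s))-c]\cdot h+o(|h|)$. For the sub-differential, I would combine the identity (\ref{value6}) on $[s,0]$ with the perturbed competitor $\tilde\gamma(\sigma):=\gamma^\ep(\sigma)-(\sigma/s)h$ for $\sigma\in[s,0]$, which satisfies $\tilde\gamma(s)=y+h$ and $\tilde\gamma(0)=x$. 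Subtracting (\ref{value6}) from the competitor inequality for $v^\ep(x)$ gives
\[
e^{\ep s}[v^\ep(y)-v^\ep(y+h)]\le \int_s^0 e^{\ep\sigma}[L^c(\tilde\gamma,\tilde\gamma')-L^c(\gamma^\ep,\gamma^\ep{}')]\,d\sigma,
\]
where $L^c:=L-c\cdot\xi+h(c)$. A Taylor expansion plus integration by parts---using the identity $\tfrac{d}{d\sigma}[e^{\ep\sigma}L^c_\xi(\gamma^\ep,\gamma^\ep{}')]=e^{\ep\sigma}L^c_x(\gamma^\ep,\gamma^\ep{}')$ derived from (\ref{dEL})---would collapse the interior contributions and leave only the boundary term $-e^{\ep s}[L_\xi(y,\gamma^\ep{}'(s))-c]\cdot h$, with an $O(|h|^2)$ remainder (for fixed $s<0$). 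This produces the matching lower bound.

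Combining the two inequalities gives differentiability at $y$ with the claimed formula. For the endpoint case $s=0$, no forward extension is available, but the super-differential step still yields $L_\xi(x,\gamma^\ep{}'(0))-c\in D^+v^\ep(x)$, so if $v^\ep_x(x)$ exists it must equal $L_\xi(x,\gamma^\ep{}'(0))-c$. If $\tilde\gamma$ were another minimizer for $v^\ep(x)$, the same formula would give $L_\xi(x,\tilde\gamma'(0))=L_\xi(x,\gamma^\ep{}'(0))$; injectivity of $L_\xi(x,\cdot)$ from (H2) forces $\tilde\gamma'(0)=\gamma^\ep{}'(0)$, and ODE uniqueness for (\ref{dEL}) then gives $\tilde\gamma\equiv\gamma^\ep$. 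I expect the main delicacy to be the integration by parts in the sub-differential step: the extra friction-induced term $\ep c$ in (\ref{dEL}) must combine precisely with the $\sigma$-derivative of $e^{\ep\sigma}(L_\xi-c)$ for the interior integral to vanish, which is the algebraic place where the argument departs from the classical non-discount version.
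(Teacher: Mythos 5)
Your proposal is correct in substance but follows a genuinely different route from the paper on the decisive step. The paper only establishes the \emph{sub}-differential inclusion $L_\xi(\gamma^\ep(\tau),\gamma^\ep{}'(\tau))-c\in D^-v^\ep(\gamma^\ep(\tau))$, doing so by patching the minimizer on a short interval $[\tau,\tau+\delta]$ with a linear deviation starting from $\gamma^\ep(\tau)+w$ and choosing $\delta=\sqrt{|w|}$; it never touches the Euler--Lagrange equation at this stage, and differentiability then follows from semiconcavity of $v^\ep$ (which guarantees $D^+v^\ep\neq\emptyset$ everywhere, and a semiconcave function with nonempty sub-differential at a point is differentiable there). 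You instead build \emph{both} one-sided bounds explicitly: your super-differential construction (backward minimizer via the dynamic programming principle plus a shrinking terminal patch) is exactly the device the paper reserves for the endpoint $s=0$, and your sub-differential construction uses the fixed forward segment $[s,0]$ with a global linear perturbation, collapsing the interior terms by integration by parts against the identity $\tfrac{d}{d\sigma}\bigl[e^{\ep\sigma}(L_\xi(\gamma^\ep,\gamma^\ep{}')-c)\bigr]=e^{\ep\sigma}L_x(\gamma^\ep,\gamma^\ep{}')$, which is indeed the correct discounted Euler--Lagrange identity (the $\ep c$ term is already absorbed into the $\sigma$-derivative of $e^{\ep\sigma}(L_\xi-c)$, so the cancellation you flag as delicate does go through). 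What your route buys is independence from the semiconcavity of $v^\ep$ ($D^+\cap D^-\neq\emptyset$ alone gives differentiability); what it costs is reliance on the $C^2$ regularity of the minimizer and the Euler--Lagrange equation, which the paper's sub-differential step avoids. Two small repairs: your $\tilde\gamma(\sigma)=\gamma^\ep(\sigma)-(\sigma/s)h$ satisfies $\tilde\gamma(s)=y-h$, not $y+h$ (harmless, replace $h$ by $-h$); and at $s=-T$ the backward piece has zero length, so your super-differential construction is unavailable there and you must fall back on semiconcavity (as the paper does throughout), which you do leave yourself room to invoke. The endpoint case $s=0$ and the uniqueness argument via injectivity of $L_\xi(x,\cdot)$ and ODE uniqueness for the discounted Euler--Lagrange flow coincide with the paper's.
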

%%%%%%%%%%%%%%%
\begin{proof}
Since $v^\ep$ is semiconcave, the superdifferential is a non-empty set: $D^+v^\ep(\gamma^\ep(s))\neq\emptyset$ for all $s\in[-T,0]$ (see, e.g.,  \cite{Cannarsa}). Hence it is enough to check that the subdifferential contains $L_\xi(\gamma^\ep(s),\gamma^\ep{}'(s))-c$.    
For any $\tau\in[-T,0)$, we have 
\begin{eqnarray*}
v^\ep(x)&=&\int^0_{-T} e^{\ep s}(L(\gamma^\ep(s), \gamma^\ep{}'(s))-c\cdot\gamma^\ep{}'(s) +h(c ))ds+e^{-\ep T}v^\ep(\gamma^\ep(-T))\\
&=&\int^0_\tau e^{\ep s}(L(\gamma^\ep(s), \gamma^\ep{}'(s))-c\cdot\gamma^\ep{}'(s) + h(c ))ds+e^{\ep \tau}v^\ep(\gamma^\ep(\tau)).
\end{eqnarray*}
Take $\delta>0$ small so that $\tau+\delta\in(-T,0)$. Introduce a continuous curve $\gamma$ with $w\in\R^n$ as
\begin{eqnarray*}
\gamma(s):=\left\{
\begin{array}{lll}
\gamma^\ep(s) &\text{for} \ \tau+\delta\le s\le 0, \\
\dis \gamma^\ep(s)+(\tau+\delta-s)\frac{w}{\delta} &\text{for} \ \tau\le s\le\tau+\delta.
\end{array}
\right.
\end{eqnarray*}
Due to the variational property, we have 
\begin{eqnarray*}
v^\ep(x)\le \int^0_\tau e^{\ep s}(L(\gamma(s), \gamma'(s))-c\cdot\gamma'(s) +h(c ))ds+e^{\ep \tau}v^\ep(\gamma(\tau)).
\end{eqnarray*}
Hence, we obtain
\begin{eqnarray*}
0&\le&\int^{\tau+\delta}_\tau e^{\ep s}\Big[\big\{ L(\gamma^\ep(s)+(\tau+\delta-s)\frac{w}{\delta}, \gamma^\ep{}'(s)-\frac{w}{\delta})-c\cdot(\gamma^\ep{}'(s)-\frac{w}{\delta})\big\} \\
&&- \big\{L(\gamma^\ep(s), \gamma^\ep{}'(s))-c\cdot\gamma^\ep{}'(s)    \big\}\Big ]ds+e^{\ep\tau}\big\{ v^\ep(\gamma^\ep(\tau)+w)-v^\ep(\gamma^\ep(\tau)) \big\}\\
&=&e^{\ep (\tau+\theta\delta)}\Big[\big\{ L(\gamma^\ep(\tau+\theta\delta)+(\delta-\theta\delta)\frac{w}{\delta}, \gamma^\ep{}'(\tau+\theta\delta)-\frac{w}{\delta})-c\cdot(\gamma^\ep{}'(\tau+\theta\delta)-\frac{w}{\delta})\big\} \\
&&- \big\{L(\gamma^\ep(\tau+\theta\delta), \gamma^\ep{}'(\tau+\theta\delta))-c\cdot\gamma^\ep{}'(\tau+\theta\delta)    \big\}\Big ]\delta\\
&&+e^{\ep\tau}\big\{ v^\ep(\gamma^\ep(\tau)+w)-v^\ep(\gamma^\ep(\tau)) \big\}\\
&=&e^{\ep (\tau+\theta\delta)}\Big\{ L_x(\gamma^\ep(\tau+\theta\delta)+\tilde{\theta}(1-\theta)w, \gamma^\ep{}'(\tau+\theta\delta)-\tilde{\theta}\frac{w}{\delta})\cdot(1-\theta)w\\
&&+L_\xi(\gamma^\ep(\tau+\theta\delta)+\tilde{\theta}(1-\theta)w, \gamma^\ep{}'(\tau+\theta\delta)-\tilde{\theta}\frac{w}{\delta})\cdot(-\frac{w}{\delta})+c\cdot\frac{w}{\delta}\Big\}\delta\\
&&+e^{\ep\tau}\big\{ v^\ep(\gamma^\ep(\tau)+w)-v^\ep(\gamma^\ep(\tau)) \big\},
\end{eqnarray*}
for some $\theta,\tilde{\theta}\in(0,1)$. Therefore, we have 
\begin{eqnarray*}
&&\frac{v^\ep(\gamma^\ep(\tau)+w)-v^\ep(\gamma^\ep(\tau)) - \{L_\xi(\gamma^\ep(\tau),\gamma^\ep{}'(\tau))-c\}\cdot w}{|w|}  \\
&&\ge -e^{\ep \theta\delta} L_x(\gamma^\ep(\tau+\theta\delta)+\tilde{\theta}(1-\theta)w, \gamma^\ep{}'(\tau+\theta\delta)-\tilde{\theta}\frac{w}{\delta})\cdot(1-\theta)\frac{w}{|w|}\delta\\
&&+e^{\ep \theta\delta} L_\xi(\gamma^\ep(\tau+\theta\delta)+\tilde{\theta}(1-\theta)w, \gamma^\ep{}'(\tau+\theta\delta)-\tilde{\theta}\frac{w}{\delta})\cdot \frac{w}{|w|}
-L_\xi(\gamma^\ep(\tau),\gamma^\ep{}'(\tau))\cdot \frac{w}{|w|}\\
&&+(1-e^{\ep \theta\delta})c\cdot \frac{w}{|w|}.
\end{eqnarray*}
Taking $\delta=\sqrt{|w|}$ and sending $w$ to $0$, we conclude that 
$$\liminf_{|w|\to0}\frac{v^\ep(\gamma^\ep(\tau)+w)-v^\ep(\gamma^\ep(\tau)) - \{L_\xi(\gamma^\ep(\tau),\gamma^\ep{}'(\tau))-c\}\cdot w}{|w|}  \ge0,$$
which means that the subdifferential $D^-v^\ep (\gamma^\ep(\tau))$ contains $L_\xi(\gamma^\ep(\tau),\gamma^\ep{}'(\tau))-c$ for all $\tau\in[-T,0)$.

Similar reasoning with the curve 
\begin{eqnarray*}
\gamma(s):=\left\{
\begin{array}{lll}
\displaystyle
\gamma^\ep(s)+\frac{s+\delta}{\delta}w &\text{for} \ -\delta\le s\le 0, \\
\dis \gamma^\ep(s) &\text{for} \ -T\le s\le-\delta
\end{array}
\right.
\end{eqnarray*}
shows that $D^+v^\ep(\gamma^\ep(0))$ contains $L_\xi(\gamma^\ep(0),\gamma^\ep{}'(0))-c$. If $v_x^\ep(\gamma^\ep(0))=v^\ep_x(x)$ exists, the set $D^+v^\ep(\gamma^\ep(0))$ must be singleton and therefore $v_x^\ep(\gamma^\ep(0))=L_\xi(\gamma^\ep(0),\gamma^\ep{}'(0))-c$.  Uniqueness of the discount Euler-Lagrange equation (\ref{dEL}) yields uniqueness of the minimizing curve.  
\end{proof}
%%%%%%%%%%%%%%%
\noindent Each minimizing curve $\gamma^\ep: [-T,0]\to\T^n$ for $v^\ep(x)$ can be extended to $(-\infty,0]$. We call an extended minimizing curve defined on $(-\infty,0]$ a one-sided global minimizing curve or just minimizing curve. Since $v^\ep_x(\gamma^\ep(-T))=L_\xi(\gamma^\ep(-T),\gamma^\ep{}'(-T))-c$ due to Proposition \ref{differentiable}, the minimizing curve for $v(\gamma^\ep(-T))$ is uniquely obtained as $\gamma^\ep(-T+\cdot)$ on any interval $[-\tilde{T},0]$. Therefore, with (\ref{value6}), we conclude that for a one-sided global minimizing curve $\gamma^\ep$ for $v^\ep(x)$ we have
\begin{eqnarray}\label{ddd-value}
v^\ep(x)&=&\int^0_{-\tau}e^{\ep s} (L(\gamma^\ep(s),\gamma^\ep{}'(s))-c\cdot\gamma^\ep{}'(s)+h(c )  )ds\\\nonumber
&&\qquad\qquad\qquad\qquad\qquad\qquad+e^{-\ep \tau}v^\ep(\gamma^\ep(-\tau))\mbox{ for any $\tau\ge0$.}
\end{eqnarray}
Equivalence between (\ref{dEL}) and (\ref{dHS}), and Proposition \ref{differentiable} yield  
%%%%%%%%%%
\begin{Thm}
The set $\graph(c+v^\ep_x)$ is backward invariant under the discount Hamiltonian flow $\phi^s_{H,c,\ep}$, i.e., $\phi^s_{H,c,\ep}(\graph(c+v^\ep_x))\subset \graph(c+v^\ep_x)\mbox{\quad for all  $s\le0$}$. 
\end{Thm}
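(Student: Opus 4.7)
The plan is to trace a one-sided global minimizing curve backward in time from a point where $v^\ep_x$ exists, and to read off the claim from Proposition \ref{differentiable} together with the equivalence between the discount Euler-Lagrange system \eqref{dEL} and the discount Hamiltonian system \eqref{dHS}. Concretely, I would fix an arbitrary point $(x_0,p_0)\in\graph(c+v^\ep_x)$, so that $v^\ep_x(x_0)$ exists and $p_0=c+v^\ep_x(x_0)$, and produce a single orbit of $\phi^s_{H,c,\ep}$ starting from $(x_0,p_0)$ that lies entirely in $\graph(c+v^\ep_x)$ for all $s\le 0$.

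First, I would invoke Proposition \ref{differentiable} at $x_0$: since $v^\ep_x(x_0)$ exists, there is a \emph{unique} minimizing curve $\gamma^\ep:[-T,0]\to\T^n$ for $v^\ep(x_0)$, for every $T>0$. The discussion following Proposition \ref{differentiable} (together with the formula \eqref{ddd-value}) shows that these finite-horizon minimizers paste together into a one-sided global minimizer $\gamma^\ep:(-\infty,0]\to\T^n$ with $\gamma^\ep(0)=x_0$, and Proposition \ref{differentiable} also gives
\[
v^\ep_x(\gamma^\ep(s))=L_\xi(\gamma^\ep(s),\gamma^\ep{}'(s))-c\quad\text{for every }s\le 0,
\]
including $s=0$ because $v^\ep_x(x_0)$ exists.

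Next, set $p^\ep(s):=L_\xi(\gamma^\ep(s),\gamma^\ep{}'(s))$. Since $\gamma^\ep$ solves the discount Euler-Lagrange equation \eqref{dEL}, the equivalence between \eqref{dEL} and \eqref{dHS} (established right before the statement) implies that $(\gamma^\ep(s),p^\ep(s))$ solves \eqref{dHS} for $s\le 0$. The initial data are $\gamma^\ep(0)=x_0$ and $p^\ep(0)=L_\xi(x_0,\gamma^\ep{}'(0))=c+v^\ep_x(x_0)=p_0$, so by definition of the flow
\[
\phi^s_{H,c,\ep}(x_0,p_0)=(\gamma^\ep(s),p^\ep(s))\quad\text{for all }s\le 0.
\]

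Finally, the identity $v^\ep_x(\gamma^\ep(s))=L_\xi(\gamma^\ep(s),\gamma^\ep{}'(s))-c=p^\ep(s)-c$ gives $p^\ep(s)=c+v^\ep_x(\gamma^\ep(s))$ for every $s\le 0$, so $(\gamma^\ep(s),p^\ep(s))\in\graph(c+v^\ep_x)$, which is exactly the desired backward invariance. I do not expect a serious obstacle: the only subtle point is to be sure that differentiability of $v^\ep$ holds at the \emph{endpoint} $s=0$ (so that we really start on the graph) and also at every backward time $s<0$; both follow directly from Proposition \ref{differentiable}, whose proof already handled the endpoint by using the fact that $v^\ep$ is semiconcave and that existence of $v^\ep_x(x_0)$ collapses the superdifferential to a singleton.
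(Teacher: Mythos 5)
Your proposal is correct and follows essentially the same route as the paper: the authors likewise extend each minimizer to a one-sided global minimizing curve via the dynamic programming principle, apply Proposition \ref{differentiable} to get $v^\ep_x(\gamma^\ep(s))=L_\xi(\gamma^\ep(s),\gamma^\ep{}'(s))-c$ for all $s\le 0$, and conclude by the equivalence of \eqref{dEL} and \eqref{dHS}. Your explicit attention to differentiability at the endpoint $s=0$ is exactly the point the paper handles through the semiconcavity argument in Proposition \ref{differentiable}, so there is no gap.
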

%%%%%%%%%%%%%%
\subsection{$\alpha$-limit point}
 Now we introduce the family of $\alpha$-limit points of all one-sided global minimizing curves of $v^\ep$:
$$\mathcal{M}_\alpha^\ep(c ):=\overline{\{ x^\ep\,|\,\mbox{$x^\ep$: $\alpha$-limit point of $\gamma^\ep$, $\gamma^\ep$: minimizer for $v^\ep(x)$, $x\in\T^n$ } \}}.$$
Here are properties of $\mathcal{M}^\ep_\alpha(c )$:
%%%%%%%%
\begin{Thm}\label{derivative8}
\begin{enumerate}[{\rm(i)}]
\item The viscosity solution $v^\ep$ of \eqref{dHJ} is differentiable on $\mathcal{M}_\alpha^\ep(c)$. 
\item Let $\gamma^\ep$ be a minimizing curve of $v^\ep$ and $(x^\ep,\xi^\ep)$ be an $\alpha$-limit point of $(\gamma^\ep(s),\gamma^\ep{}'(s))$. Then $\xi^\ep=H_p(x^\ep,c+v_x^\ep(x^\ep))$.
\item Let $x^\ep$ be a point of $\mathcal{M}_\alpha^\ep(c)$ and $x(s)$ be the solution of the discount Euler-Lagrange equation \eqref{dEL} with $x(0)=x^\ep$, $x'(0)=H_p(x^\ep,c+v_x^\ep(x^\ep))$. Then $x(s)\in\mathcal{M}_\alpha^\ep(c)$ and $v^\ep_x(x(s))=L_\xi(x(s),x'(s))-c$ for all $s\in\R$. 
\end{enumerate}
\end{Thm}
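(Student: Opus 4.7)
The plan is to run through the three statements as in the proof of Theorem~\ref{invariance}, replacing each non-discount ingredient by its discount counterpart: Proposition~\ref{differentiable} gives pointwise differentiability of $v^\ep$ along minimizers together with the identity $v^\ep_x(\gamma^\ep(s))=L_\xi(\gamma^\ep(s),\gamma^\ep{}'(s))-c$, and the dynamic programming identity (\ref{ddd-value}) plays the role that the translation-invariant variational formula (\ref{value-func}) plays in the non-discount setting. The only new bookkeeping is the factor $e^{\ep s}$ inside the integrand and the factor $e^{-\ep\tau}$ in front of the boundary term, both of which are harmless once the integration window is frozen before the limit is taken.

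First I pick an $\alpha$-limit point $x^\ep$ and a sequence $\tau_j\to-\infty$ with $\gamma^\ep(\tau_j)\to x^\ep$ along some one-sided global minimizer $\gamma^\ep$. Since $v^\ep$ is Lipschitz on $\T^n$, the identity $\gamma^\ep{}'(\tau_j)=H_p(\gamma^\ep(\tau_j),c+v^\ep_x(\gamma^\ep(\tau_j)))$ from Proposition~\ref{differentiable} together with (H1)--(H3) bounds $\gamma^\ep{}'(\tau_j)$ uniformly, so I may extract a subsequence with $\gamma^\ep{}'(\tau_j)\to\xi^\ep$ (in case (ii) the velocity $\xi^\ep$ is already prescribed). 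Continuous dependence on initial data for the discount Euler--Lagrange flow $\phi^s_{L,c,\ep}$ then yields, for every fixed $\alpha>0$,
\[
(\gamma^\ep(\cdot+\tau_j),\gamma^\ep{}'(\cdot+\tau_j))\to (x(\cdot),x'(\cdot))\quad\mbox{uniformly on }[-\alpha,\alpha],
\]
where $x(s)$ solves (\ref{dEL}) with $x(0)=x^\ep$, $x'(0)=\xi^\ep$. In particular $x(s)\in\mathcal{M}^\ep_\alpha(c)$ for every $s\in\R$, since for any fixed $s$ we have $s+\tau_j\to-\infty$, so $\gamma^\ep(s+\tau_j)\to x(s)$ exhibits $x(s)$ as an $\alpha$-limit point of the very same minimizer $\gamma^\ep$.

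Next I identify $x(\cdot+\alpha)$ as a minimizing curve. For $j$ large the point $\gamma^\ep(\alpha+\tau_j)$ is an interior point of $\gamma^\ep$, so the uniqueness clause in Proposition~\ref{differentiable} forces the one-sided global minimizer for $v^\ep(\gamma^\ep(\alpha+\tau_j))$ to be exactly $\gamma^\ep(\cdot+\alpha+\tau_j)$. Applying (\ref{ddd-value}) with $\tau=2\alpha$ gives
\begin{align*}
v^\ep(\gamma^\ep(\alpha+\tau_j))
&=\int^0_{-2\alpha}e^{\ep s}\bigl(L(\gamma^\ep(s+\alpha+\tau_j),\gamma^\ep{}'(s+\alpha+\tau_j))\\
&\qquad\quad{}-c\cdot\gamma^\ep{}'(s+\alpha+\tau_j)+h(c)\bigr)\,ds+e^{-2\ep\alpha}v^\ep(\gamma^\ep(-\alpha+\tau_j)).
\end{align*}
Letting $j\to\infty$, the uniform convergence of the curves controls the integrand and continuity of $v^\ep$ controls the boundary term, yielding the same representation with $x(\cdot+\alpha)$ and $x(-\alpha)$ in place of the shifted $\gamma^\ep$'s. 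Hence $x(\cdot+\alpha)|_{[-2\alpha,0]}$ is a minimizer for $v^\ep(x(\alpha))$, and Proposition~\ref{differentiable} delivers $v^\ep_x(x(s+\alpha))=L_\xi(x(s+\alpha),x'(s+\alpha))-c$ for all $s\in[-2\alpha,0)$. Setting $s=-\alpha$ yields differentiability of $v^\ep$ at $x^\ep$ and, via the Legendre transform, $\xi^\ep=H_p(x^\ep,c+v^\ep_x(x^\ep))$; arbitrariness of $\alpha>0$ promotes the derivative relation to every $s\in\R$, giving (i)--(iii) at $x^\ep$.

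Finally, for a point $x^\ep$ only in the closure $\mathcal{M}^\ep_\alpha(c)$, I approximate it by actual $\alpha$-limit points $x^\ep_i\to x^\ep$, extract a convergent subsequence of the associated velocities $\xi^\ep_i\to\xi^\ep$ (using the uniform velocity bound), and invoke continuous dependence on initial data for (\ref{dEL}) once more to transfer the conclusions to the limiting trajectory. The step I expect to require the most care, and thus the main obstacle, is the limit passage in the displayed identity above: one must ensure that the exponential discount weights and the shifted boundary term behave continuously as $j\to\infty$. Freezing the integration window $[-2\alpha,0]$ before sending $j\to\infty$ keeps $e^{\ep s}$ bounded and uniform on that window, and leaves the boundary term as a continuous function of $\gamma^\ep(-\alpha+\tau_j)$, so the passage to the limit is legitimate.
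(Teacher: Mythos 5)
Your proposal is correct and follows essentially the same route as the paper's proof: extract a convergent subsequence of velocities at the $\alpha$-limit point, use continuous dependence for the flow of \eqref{dEL} to pass to a limiting trajectory, invoke the dynamic programming identity on a frozen window $[-2\alpha,0]$ and let $j\to\infty$ to identify the limit as a minimizer, then apply Proposition \ref{differentiable} and handle closure points by approximation. Your explicit remark that the discount weights $e^{\ep s}$ and $e^{-2\ep\alpha}$ are harmless on a fixed window is exactly the (implicit) point that makes the non-discount argument of Theorem \ref{invariance} carry over.
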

%%%%%%%%
\begin{proof}
Let $x^\ep$ be an arbitrary point of $\mathcal{M}_\alpha^\ep(c)$ for which  there exists a minimizing curve $\gamma^\ep:(-\infty,0]\to\T^n$ and $\tau_j\to-\infty$ ($j\to\infty$) such that $\gamma^\ep(\tau_j)\to x^\ep$ ($j\to\infty$). Since 
$$\gamma^\ep{}'(\tau_j)=H_p(\gamma^\ep(\tau_j),c+v^\ep_x(\gamma^\ep(\tau_j)))\mbox{ ($\Leftrightarrow$ $v^\ep_x(\gamma^\ep(\tau_j))=L_\xi(\gamma^\ep(\tau_j),\gamma^\ep{}'(\tau_j))-c$)}$$
is a bounded sequence, there exists a subsequence, still denoted by $\{\gamma^\ep{}'(\tau_j)\}_j$, which converges to some $\xi^\ep\in\R^n$. Let $x(s):\R\to\T^n$ be the  solution of (\ref{dEL}) with the initial condition $x(0)=x^\ep,\,x'(0)=\xi^\ep$. It follows from the continuous dependence on initial conditions that we have for any $\alpha>0$, 
$$(\gamma^\ep(\cdot+\tau_j), \gamma^\ep{}'(\cdot+\tau_j))\to (x(\cdot),x'(\cdot))\mbox{ uniformly on $[-\alpha,\alpha]$ ($j\to\infty$)}.$$   
Hence, we see that $x(s)\in\mathcal{M}_\alpha^\ep(c)$ for all $s\in\R$ by the definition of $\mathcal{M}_\alpha^\ep(c)$. Since $\gamma^\ep(\cdot+\alpha+\tau_j):[-2\alpha,0]\to\T^n$ is the unique minimizing curve for $v(\gamma^\ep(\alpha+\tau_j))$ for each fixed large $j$, i.e.,
\begin{eqnarray*}
v^\ep(\gamma^\ep(\alpha+\tau_j))&=&\int^0_{-2\alpha}e^{\ep s}(L(\gamma^\ep(s+\alpha+\tau_j),\gamma^\ep{}'(s+\alpha+\tau_j))-c\cdot\gamma^\ep{}'(s+\alpha+\tau_j)\\
&& +h(c ))ds+e^{-\ep\cdot 2\alpha}v^\ep(\gamma^\ep(-\alpha+\tau_j)).
\end{eqnarray*}
We obtain by letting $j\to\infty$,
\begin{eqnarray*}
v^\ep(x(\alpha))&=&\int^0_{-2\alpha}e^{\ep s}(L(x(\alpha+s),x'(\alpha+s))-c\cdot x'(\alpha+s) +h(c ))ds\\
&&+e^{-\ep\cdot2\alpha}v^\ep(x(-\alpha)). 
\end{eqnarray*}  
Therefore, we see that $x(\alpha+s)$, $s\in[-2\alpha,0]$ is a minimizing curve for $v^\ep(x(\alpha))$, and hence that $v_x^\ep(x(\alpha+s))=L_\xi(x(\alpha+s),x'(\alpha+s))-c$ for all $s\in[-2\alpha,0)$. For $s=-\alpha$, we obtain $v_x^\ep(x^\ep)=L_\xi(x^\ep,\xi^\ep)-c$ and $\xi^\ep=x'(0)=H_p(x^\ep,c+v_x(x^\ep))$. Continuity yields the same for each accumulating point $(x^\ep,\xi^\ep)$ of $\mathcal{M}_\alpha^\ep(c)$. Noting that $\alpha>0$ is arbitrary, we complete the proof.  \end{proof}
%%%%%%%%%%%%
\noindent The following sets are well-defined:
\begin{eqnarray*}
\tilde{\mathcal{M}}_\alpha^\ep(c )&:=&\{(x^\ep,H_p(x^\ep,c+v^\ep_x(x^\ep)))\,|\,x^\ep\in\mathcal{M}_\alpha^\ep(c ) \},\\
\tilde{\mathcal{M}}_\alpha^\ep{}^\ast(c )&:=&\{(x^\ep,c+v^\ep_x(x^\ep))\,|\,x^\ep\in\mathcal{M}_\alpha^\ep(c ) \}.
\end{eqnarray*}
%%%%%%%%
\begin{Rem}
 The set $\mathcal{M}_\alpha^\ep(c)$ is the closure of the family of all the curves $x(s)$ in Theorem \ref{derivative8}, and 
\begin{eqnarray*}
\tilde{\mathcal{M}}_\alpha^\ep(c)&=&\overline{\{ (x^\ep,\xi^\ep)\,|\, \mbox{$(x^\ep,\xi^\ep)$: $\alpha$-limit point of $(\gamma^\ep(s),\gamma^\ep{}'(s))$}},\\
&&\qquad\qquad\qquad\qquad\qquad\qquad \overline{\mbox{$\gamma^\ep$: minimizing curve of $v^\ep$ } \}}.
\end{eqnarray*} 
\end{Rem}
%%%%%%%
\noindent Here are properties of $\tilde{\mathcal{M}}^\ep_\alpha(c)$ and $\tilde{\mathcal{M}}^\ep{}^\ast_\alpha(c)$:
%%%%%%%%%
\begin{Thm}
\begin{enumerate}[{\rm(i)}]
\item For each point $x\in\T^n$, there exists $\xi\in\R^n$ such that $\phi^s_{L,c,\ep}(x,\xi)$ falls into $\tilde{\mathcal{M}}_\alpha^\ep(c)$ as $s\to-\infty$, i.e., any accumulating point of $\{\phi^s_{L,c,\ep}(x,\xi)\}_{s\le0}$ belongs to $\tilde{\mathcal{M}}_\alpha^\ep(c)$. If $v^\ep_x(x)$ exists, $\xi=H_p(x,c+v_x^\ep(x))$.
\item  $\tilde{\mathcal{M}}_\alpha^\ep(c)$ is a $\phi^s_{L,c,\ep}$-invariant subset of $\T^n\times\R^n$, i.e., $\phi^s_{L,c,\ep}(\tilde{\mathcal{M}}_\alpha^\ep(c))=\tilde{\mathcal{M}}_\alpha^\ep(c)$  for all $s\in\R$. 
\item For each point $x\in\T^n$,  there exists $p\in\R^n$ such that $\phi^s_{H,c,\ep}(x,p)$ falls into $\tilde{\mathcal{M}}^\ep_\alpha{}^\ast(c)$ as $s\to-\infty$ along $\graph(c+v_x^\ep)$. If $v^\ep_x(x)$ exists, $p=c+v_x^\ep(x)$.
\item $\tilde{\mathcal{M}}^\ep_\alpha{}^\ast(c)$ is a $\phi^s_{H,c,\ep}$-invariant subset of $\graph(c+v^\ep_x)$. 
\end{enumerate}
\end{Thm}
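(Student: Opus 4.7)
The plan is to reproduce, step by step, the argument of Theorem~\ref{similar}, with Proposition~\ref{differentiable} playing the role of (\ref{nonderivative}) and Theorem~\ref{derivative8} playing the role of Theorem~\ref{invariance}. All four parts follow by transferring, via the Legendre transform, information already established about one-sided global minimizers of $v^\ep$.

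For (i), fix $x\in\T^n$ and pick any one-sided global minimizing curve $\gamma^\ep:(-\infty,0]\to\T^n$ with $\gamma^\ep(0)=x$; existence is precisely what was established immediately before (\ref{ddd-value}). Setting $\xi:=\gamma^\ep{}'(0)$, uniqueness of solutions of (\ref{dEL}) yields $\phi^s_{L,c,\ep}(x,\xi)=(\gamma^\ep(s),\gamma^\ep{}'(s))$ for all $s\le 0$, and any accumulation point as $s\to-\infty$ is by definition an $\alpha$-limit point of $\gamma^\ep$, hence lies in $\tilde{\mathcal{M}}_\alpha^\ep(c)$. When $v^\ep_x(x)$ exists, Proposition~\ref{differentiable} forces $\xi=H_p(x,c+v_x^\ep(x))$ and simultaneously shows that $\gamma^\ep$ is the unique minimizer. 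For (ii), take $(x^\ep,\xi^\ep)\in\tilde{\mathcal{M}}_\alpha^\ep(c)$, so $\xi^\ep=H_p(x^\ep,c+v_x^\ep(x^\ep))$ by construction, and let $x(s)$ solve (\ref{dEL}) with initial data $(x^\ep,\xi^\ep)$. Part (iii) of Theorem~\ref{derivative8} places $x(s)$ in $\mathcal{M}_\alpha^\ep(c)$ for every $s\in\R$ and delivers $v_x^\ep(x(s))=L_\xi(x(s),x'(s))-c$, equivalently $x'(s)=H_p(x(s),c+v^\ep_x(x(s)))$; this says $\phi^s_{L,c,\ep}(x^\ep,\xi^\ep)\in\tilde{\mathcal{M}}_\alpha^\ep(c)$ for every $s\in\R$, giving both inclusions and hence the claimed equality.

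For (iii) and (iv), I would pass to the Hamiltonian side through $p:=L_\xi(x,\xi)$. Equivalence of (\ref{dEL}) and (\ref{dHS}) then gives the conjugation
\begin{equation*}
\phi^s_{H,c,\ep}(x,p)=\bigl(\pi\circ\phi^s_{L,c,\ep}(x,\xi),\,L_\xi(\phi^s_{L,c,\ep}(x,\xi))\bigr),
\end{equation*}
where $\pi:\T^n\times\R^n\to\T^n$ is the standard projection. Combining this with (i) and with the identity $p^\ep(s)=L_\xi(\gamma^\ep(s),\gamma^\ep{}'(s))=c+v_x^\ep(\gamma^\ep(s))$ along one-sided global minimizers (Proposition~\ref{differentiable}) produces (iii), where the orbit stays in $\graph(c+v^\ep_x)$ thanks to the backward invariance theorem recorded just before this subsection; the same conjugation applied to (ii) gives (iv). No genuine obstacle is expected: the only delicate bookkeeping is the consistent identification between velocities $\xi^\ep$ and momenta $p^\ep$ and ensuring that the curve furnished by Theorem~\ref{derivative8}(iii) is defined on all of $\R$ (not merely $(-\infty,0]$), which that theorem already guarantees.
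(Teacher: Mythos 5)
Your proposal is correct and follows essentially the same route as the paper, which simply states that the proof is identical to that of Theorem \ref{similar}: (i) by definition via one-sided global minimizers, (ii) from part (iii) of Theorem \ref{derivative8}, and (iii)--(iv) by transferring to the Hamiltonian side through $p=L_\xi(x,\xi)$ and the equivalence of (\ref{dEL}) and (\ref{dHS}). You have merely written out explicitly the details the paper leaves implicit, and the bookkeeping (Proposition \ref{differentiable} in place of (\ref{nonderivative}), Theorem \ref{derivative8} in place of Theorem \ref{invariance}) is exactly right.
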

%%%%%%%%
\noindent Proof is the same as that of Theorem \ref{similar}.
%%%%%%%
%\begin{proof} 
%1. is clear by definition. 2. is clear by 3. of Theorem \ref{derivative8}. Set $p:=L_\xi(x,\xi)$, $\xi:=\gamma^\ep{}'(0)$ with a minimizing curve $\gamma^\ep$ for $v^\ep(x)$. Then we have $\phi^s_{H,c,\ep}(x,p)=(\pi\circ \phi^s_{L,c,\ep}(x,\xi), L_\xi(\phi^s_{L,c,\ep}(x,\xi)))$. Hence, 3. follows from above 1., Proposition \ref{differentiable} with equivalence between (\ref{dEL}) and (\ref{dHS}), and 2. of Theorem \ref{derivative8}. Let $(x^\ep,p^\ep)$ be an arbitrary point of $\tilde{\mathcal{M}}_\alpha^\ep{}^\ast(c)$. 4. follows from  3. of Theorem \ref{derivative8} with $x'(0)=H_p(x^\ep,p^\ep)$ and equivalence between (\ref{EL}) and (\ref{HS})
%\end{proof}
%%%%%%%%%%%%%%%%%%
%%%%%%%%%%%%%%%%%%
\subsection{Two types of minimizing measure}
We observe measure-theoretical characterization of the discount problem and show relation between $\mathcal{M}_\alpha^\ep(c )$, $\tilde{\mathcal{M}}_\alpha^\ep(c )$ and minimizing measures. There are two types of minimizing measures obtained from (M1)$^\ep$ and (M2)$^\ep$ in Introduction. As is observed below, (M1)$^\ep$ is a direct generalization of (M), and is well-related to $\mathcal{M}_\alpha^\ep(c )$, $\tilde{\mathcal{M}}_\alpha^\ep(c )$. On the other hand,  (M2)$^\ep$ is obtained naturally from (\ref{d-value-func}) \cite{Davini}, or the nonlinear adjoint method as a solution of the adjoint PDE  \cite{MT}.  We will see that the minimizing measures of (M2)$^\ep$ are different from those of (M1)$^\ep$, namely the minimizing measures of (M2)$^\ep$ are not $\phi^s_{L,c,\ep}$-invariant in general and their support is not related to $\mathcal{M}_\alpha^\ep(c )$,  $\tilde{\mathcal{M}}_\alpha^\ep(c )$, whereas both tend to some of the minimizing measures of (M) as $\ep\to0+$. In the selection problem \cite{Davini}, \cite{MT}, the minimizing measures of (M2)$^\ep$ are exploited.  

Now we observe more on (M1)$^\ep$. Let $\mu$ be arbitrary $\phi^s_{L,c,\ep}$-invariant probability measure. Then we have for any $(x,\xi)\in\T^n\times\R^n$,
\begin{equation}\label{321-ineq}
v^\ep(\pi\circ\phi^0_{L,c,\ep}(x,\xi))\le\int^0_{-T}e^{\ep s} \{L^c(\phi^s_{L,c,\ep}(x,\xi))  + h(c) \}ds
+\ep^{-\ep T}v^\ep(\pi\circ\phi^{-T}_{L,c,\ep}(x,\xi)).
\end{equation}
%\begin{eqnarray}\label{321-ineq}
%v^\ep(\pi\circ\phi^0_{L,c,\ep}(x,\xi))&\le& \int^0_{-T}e^{\ep s} \{L^c(\phi^s_{L,c,\ep}(x,\xi))  + h(c) \}ds\\\nonumber
%&&\qquad\qquad\qquad\qquad+\ep^{-\ep T}v^\ep(\pi\circ\phi^{-T}_{L,c,\ep}(x,\xi)).
%\end{eqnarray}
Integrating the inequality with $\mu$ over $\T^n\times\R^n$, we obtain
\begin{eqnarray*}
\int_{\T^n\times\R^n}v^\ep(x)d\mu \le \frac{1-e^{-\ep T}}{\ep}\left\{ \int_{\T^n\times\R^n} L^c(x,\xi) \,d\mu + h(c)\right\}+e^{-\ep T}\int_{\T^n\times\R^n}v^\ep(x)d\mu.
\end{eqnarray*}
Letting $T\to\infty$, we have 
$$\int_{\T^n\times\R^n}L(x,\xi)-c\cdot\xi    -\ep v^\ep(x) \,d\mu\ge-h(c).$$
Let $\gamma^\ep(-\infty,0]\to\T^n$ be a minimizing curve of $v^\ep$. Note that $\{(\gamma^\ep(s),\gamma^\ep{}'(s))\}_{s\le0}$ is contained in a compact set $K\subset \T^n\times\R^n$ independent from $\ep$ and choice of $\gamma^\ep$, because $\{v^\ep\}_{\ep>0}$ is equi-Lipschitz continuous. Define the linear functional 
$$\Psi^\ep_T(\varphi):=\frac{1}{T}\int^0_{-T}\varphi(\gamma^\ep(s),\gamma^\ep{}'(s))ds,\,\,\,\,\,\varphi\in C_c(\T^n\times\R^n,\R).$$
Then, by the Riesz representation theorem, there exists a probability measure $\mu_T$ on $\T^n\times\R^n$ such that 
\begin{eqnarray}\label{d-holo11}
\Psi_T^\ep(\varphi)=\int_{\T^n\times\R^n}\varphi(x,\xi)d\mu_T\,\,\,\mbox{ for all $\varphi\in C_c(\T^n\times\R^n,\R)$}.
\end{eqnarray}
Since $\supp(\mu_T)$ is contained in the compact set $K\subset \T^n\times\R^n$ for all $T>0$ and $\ep>0$, we have $T_i\to\infty$ for which $\mu_{T_i}$ converges weakly to a probability measure  $\mu^\ep$. We see that $\mu^\ep$ is $\phi^s_{L,c,\ep}$-invariant. Furthermore, (\ref{d-holo11}) with $\varphi(x,\xi)=\ep v^\ep(x)$ (in $K$, otherwise re-defined to be $0$ continuously) and (\ref{dHJ}), as well as equivalence between (\ref{dEL}) and (\ref{dHS}) through the Legendre transform, yields 
\begin{eqnarray*}     
\int_{\T^n\times\R^n}\ep v^\ep(x) d\mu_{T_j}&=&\frac{1}{{T_j}}\int^0_{-{T_j}} \ep v^\ep(\gamma^\ep(s))ds\\
&=&\frac{1}{{T_j}}\int^0_{-{T_j}}-H(\gamma^\ep(s),c+v^\ep_x(\gamma^\ep(s)))+h(c )ds\\
&=&-\frac{1}{{T_j}}\int^0_{-{T_j}}\gamma^\ep{}'(s)\cdot \{c+v^\ep_x(\gamma^\ep(s))\}-L(\gamma^\ep(s),\gamma^\ep{}'(s))ds+h(c) \\
&=&-\frac{1}{{T_j}}\int^0_{-{T_j}}\frac{d}{ds}\{v^\ep(\gamma^\ep(s)) \}+\frac{1}{{T_j}}\int^0_{-{T_j}}L^c(\gamma^\ep(s),\gamma^\ep{}'(s))ds+h(c)\\
&=&-\frac{v^\ep(\gamma^\ep(0))-v^\ep(\gamma^\ep(-{T_j}))}{{T_j}}+\int_{\T^n\times\R^n}L^c(x,\xi)d\mu_{T_j}+h(c).
\end{eqnarray*}
Hence, we obtain with $j\to\infty$,
\begin{eqnarray}\label{3minimizing}     
\int_{\T^n\times\R^n}L(x,\xi)-c\cdot\xi-\ep v^\ep(x) \,\,d\mu^\ep=-h(c ),
\end{eqnarray}
namely, $\mu^\ep$ is a minimizing measure of (M1)$^\ep$. It is clear that $\mu^\ep$ satisfies the holonomic condition 
\begin{eqnarray*}
\int_{\T^n\times\R^n} \psi_x(x)\cdot\xi d\mu^\ep=0\mbox{ \,\,\,for all $\psi\in C^1(\T^n)$}.
\end{eqnarray*}
\indent We examine the support of minimizing measures of (M1)$^\ep$. Let $\mu^\ep$ be an arbitrary minimizing measure of (M1)$^\ep$. Since (\ref{321-ineq}) holds for any $(x,\xi)\in\T^n\times\R^n$, the minimizing property (\ref{3minimizing}) of $\mu^\ep$ implies that we have  
\[
v^\ep(\pi\circ\phi^0_{L,c,\ep}(x,\xi))=\int^0_{-T}e^{\ep s} \{L^c(\phi^0_{L,c,\ep}(x,\xi))  + h(c) \}ds
+\ep^{-\ep T}v^\ep(\pi\circ\phi^{-T}_{L,c,\ep}(x,\xi))
\]
for each $(x,\xi)\in\supp(\mu^\ep)$. 
%\begin{eqnarray*}
%v^\ep(\pi\circ\phi^0_{L,c,\ep}(x,\xi))&= &\int^0_{-T}e^{\ep s} \{L^c(\phi^0_{L,c,\ep}(x,\xi))  + h(c) \}ds\\
%&&\quad\qquad+\ep^{-\ep T}v^\ep(\pi\circ\phi^{-T}_{L,c,\ep}(x,\xi))\mbox{\quad for each $(x,\xi)\in\supp(\mu^\ep)$}.
%\end{eqnarray*}
Hence, we have $\xi=H_p(x,c+v_x^\ep(x))$ for each $(x,\xi)\in\supp(\mu^\ep)$, which means that $\supp(\mu^\ep)$ is contained in a compact set $K\subset\T^n\times\R^n$ for all $\ep>0$.  Since $\supp(\mu^\ep)\subset K$ for all $\ep>0$, there exists a weakly convergent subsequence $\mu^{\ep_j}$, $\ep_j\to0+$ ($j\to\infty$), whose limit is a minimizing measure of (M).  Define the set for each $\ep>0$,
$$\tilde{\mathcal{M}}^\ep(c ):= \overline{\bigcup_{\mu^\ep}\supp(\mu^\ep)},\,\,\,\,\mathcal{M}^\ep(c ):=\pi\tilde{\mathcal{M}}^\ep(c ),$$
where the union is taken with respect to all minimizing measures of (M1)$^\ep$. Reasoning similar to the proof of Theorem \ref{comparison} in Section 2 shows that each point $(x,\xi)\in\supp(\mu^\ep)$ itself is an $\alpha$-limit point of $\phi^s_{L,c,\ep}(x,\xi)$, where $\mu^\ep$ is a minimizing measure of (M1)$^\ep$. Therefore, we have the following inclusion:
%%%%%%%%%%%%
\begin{Thm}
${\mathcal{M}}^\ep(c )\subset{\mathcal{M}}_\alpha^\ep(c )\subset\T^n$, $\tilde{\mathcal{M}}^\ep(c )\subset\tilde{\mathcal{M}}_\alpha^\ep(c )\subset\T^n\times\R^n$.
\end{Thm}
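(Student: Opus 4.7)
The plan is to first observe that the right-hand inclusions in both lines are immediate from the definitions; the real work is in the left-hand inclusions. Since $\mathcal{M}^\ep(c) = \pi \tilde{\mathcal{M}}^\ep(c)$ with $\tilde{\mathcal{M}}^\ep(c) = \overline{\bigcup_{\mu^\ep}\supp(\mu^\ep)}$, and since Theorem \ref{derivative8}(i) together with semiconcavity of $v^\ep$ makes $\tilde{\mathcal{M}}^\ep_\alpha(c)$ a closed subset of $\T^n \times \R^n$, everything reduces to showing $\supp(\mu^\ep) \subset \tilde{\mathcal{M}}^\ep_\alpha(c)$ for every minimizing measure $\mu^\ep$ of (M1)$^\ep$.

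Fix such a $\mu^\ep$, which by the formulation of (M1)$^\ep$ is $\phi^s_{L,c,\ep}$-invariant, and take an arbitrary $(x^\ep,\xi^\ep) \in \supp(\mu^\ep)$. The discussion preceding the statement already forces $\xi^\ep = H_p(x^\ep, c + v^\ep_x(x^\ep))$ and turns \eqref{321-ineq} into equality on $\supp(\mu^\ep)$; combined with Proposition \ref{differentiable} and uniqueness for \eqref{dEL}, this makes $\pi \circ \phi^s_{L,c,\ep}(x^\ep,\xi^\ep)$, $s \in (-\infty, 0]$, the unique one-sided global minimizing curve for $v^\ep(x^\ep)$. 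Thus, once I establish that $(x^\ep,\xi^\ep)$ is itself an $\alpha$-limit point of this backward orbit, it follows that $x^\ep \in \mathcal{M}^\ep_\alpha(c)$, and the defining formula for $\tilde{\mathcal{M}}^\ep_\alpha(c)$ places $(x^\ep,\xi^\ep)$ inside it.

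To establish this recurrence I will mimic the direct argument in the proof of Theorem \ref{comparison}(i)--(ii): suppose $(x^\ep,\xi^\ep) \notin \tilde{\mathcal{M}}^\ep_\alpha(c)$, pick a closed $\delta$-ball $B^\ast_\delta$ around it that $\phi^s_{L,c,\ep}(x^\ep,\xi^\ep)$ never revisits as $s \to -\infty$, and split into the same two cases as in the non-discount argument. In case (a), only points of the full orbit through $(x^\ep,\xi^\ep)$ meet $B^\ast_\delta \cap \supp(\mu^\ep)$; in case (b), after passing to a subsequence and shrinking $\delta$, Theorem \ref{derivative8}(iii) ensures that no point of $[B^\ast_\delta \cap \supp(\mu^\ep)] \setminus \{\phi^s_{L,c,\ep}(x^\ep,\xi^\ep)\}_{s\in\R}$ lies in $\tilde{\mathcal{M}}^\ep_\alpha(c)$, so its own backward orbit must eventually escape $B^\ast_\delta$ as well.

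The contradiction is then produced by choosing $\varphi \in C_c(\T^n \times \R^n, \R)$ with $\supp\varphi \subset B^\ast_\delta$ and $\varphi > 0$ on $\supp\varphi$, and using $\phi^s_{L,c,\ep}$-invariance of $\mu^\ep$ together with Fubini to write
\begin{eqnarray*}
0 < \int_{\T^n \times \R^n} \varphi \, d\mu^\ep
= \int_{\T^n \times \R^n} \frac{1}{T}\int_{-T}^0 \varphi(\phi^s_{L,c,\ep}(x,\xi)) \, ds \, d\mu^\ep
\end{eqnarray*}
for every $T > 0$; the inner time average then tends to $0$ pointwise on $\supp(\mu^\ep) \cap B^\ast_\delta$ by the no-return property, and dominated convergence forces the right-hand side to $0$ as $T \to \infty$. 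I expect the delicate step to be case (b), namely arranging the $\delta$-shrinkage so that every relevant backward orbit escapes $B^\ast_\delta$; this parallels the corresponding passage in the non-discount proof and exploits that Theorem \ref{derivative8}(iii) is stated precisely for the discount flow $\phi^s_{L,c,\ep}$, so no new dynamical input is needed.
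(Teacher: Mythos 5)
Your proposal is correct and follows essentially the same route as the paper: the authors dispose of this theorem by remarking that ``reasoning similar to the proof of Theorem \ref{comparison}'' shows each point of $\supp(\mu^\ep)$ is an $\alpha$-limit point of its own backward orbit under $\phi^s_{L,c,\ep}$, which is exactly the case (a)/(b) recurrence argument with the time-average and invariance of $\mu^\ep$ that you reproduce. Your write-up simply makes explicit the reduction steps (equality in \eqref{321-ineq} on the support, uniqueness of the minimizing curve via Proposition \ref{differentiable}, and the use of Theorem \ref{derivative8}(iii) in case (b)) that the paper leaves implicit.
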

%%%%%%%%%%%%
Now we observe more on (M2)$^\ep$. Let $x_0\in\T^n$ be fixed and $\gamma^\ep:(-\infty,0]\to\T^n$ be a minimizing curve for $v^\ep(x_0)$. Define the linear functional 
$$\tilde{\Psi}^\ep_T(\varphi):=\frac{\ep}{1-e^{-\ep T}}\int^0_{-T}e^{\ep s}\varphi(\gamma^\ep(s),\gamma^\ep{}'(s))ds,\,\,\,\,\,\varphi\in C_c(\T^n\times\R^n,\R).$$
Then, by  the Riesz representation theorem, there exists a probability measure $\tilde{\mu}_T$ on $\T^n\times\R^n$ such that 
\begin{eqnarray}\label{d2-holo11}
\tilde{\Psi}_T^\ep(\varphi)=\int_{\T^n\times\R^n}\varphi(x,\xi)d\tilde{\mu}_T\,\,\,\mbox{ for all $\varphi\in C_c(\T^n\times\R^n,\R)$}.
\end{eqnarray}
Since $\supp(\tilde{\mu}_T)$ is contained in a compact set $K\subset \T^n\times\R^n$ for all $T>0$ and $\ep>0$, we have $T_i\to\infty$ for which $\tilde{\mu}_{T_i}$ converges weakly to a probability measure  $\tilde{\mu}^\ep$ as $i\to\infty$. For $\varphi(x,\xi)=L^c(x,\xi)+h(c)$ (in $K$, otherwise re-defined to be $0$ continuously), we have 
\begin{eqnarray*}
\int_{\T^n\times\R^n}L^c(x,\xi)+h(c)d\tilde{\mu}_{T_i}&=&\frac{\ep}{1-e^{-\ep T}}\int^0_{-T_i}e^{\ep s}\{L^c(\gamma^\ep(s),\gamma^\ep{}'(s))+h(c)\}ds\\
&=&\frac{\ep}{1-e^{-\ep T_i}}\{ v^\ep(x_0)-e^{-\ep T_i}v^\ep (\gamma^\ep(-T_i)) \},
\end{eqnarray*}
and hence with $i\to\infty$,
$$\int_{\T^n\times\R^n}L(x,\xi)-c\cdot\xi\,d\tilde{\mu}^\ep=-h(c)+\ep v^\ep(x_0).$$
For $\varphi(x,\xi)=\psi_x(x)\cdot\xi$ (in $K$, otherwise re-defined to be $0$ continuously) with $\psi\in C^1(\T^n)$, we have 
\begin{eqnarray*}
\int_{\T^n\times\R^n}\psi_x(x)\cdot\xi d\tilde{\mu}_{T_i}&=&\frac{\ep}{1-e^{-\ep T}}\int^0_{-T_i}e^{\ep s}\psi_x(\gamma^\ep(s))\cdot\gamma^\ep{}'(s)ds\\
&=&\frac{\ep}{1-e^{-\ep T_i}}\int^0_{-T_i}e^{\ep s} \frac{d}{ds}\{ \psi(\gamma^\ep(s))\}\\
 &=&\frac{\ep}{1-e^{-\ep T_i}}\left\{   e^{\ep s}\psi(\gamma^\ep(s))|^0_{-T_i}-\ep   \int^0_{-T_i}e^{\ep s}  \psi(\gamma^\ep(s)) ds\right\}\\
 &=& \frac{\ep}{1-e^{-\ep T_i}}\left\{  \psi(x_0) - e^{-\ep T_i}\psi(\gamma^\ep(-T_i))\right\}-\ep   \int_{\T^n\times\R^n}  \psi(x) d\tilde{\mu}_{T_i}.
\end{eqnarray*}
Letting $i\to\infty$, we see that $\tilde{\mu}^\ep$ satisfies (\ref{d-holonomic}). In order to see that $\tilde{\mu}^\ep$ is a minimizing measure of (M2)$^\ep$, we check  
\begin{eqnarray}\label{minimini}
\int_{\T^n\times\R^n}L(x,\xi)-c\cdot\xi\,d\mu \ge-h(c)+\ep v^\ep(x_0)
\end{eqnarray}
to be true for any probability measure $\mu$ satisfying (\ref{d-holonomic}). Let $u^\eta$ denote $v^\ep\ast\rho^\eta$, where $\rho^\eta$ is the standard mollifier on $\T^n$ with the parameter $\eta\to0+$. Note that $u^\eta_x=v^\ep_x\ast\rho^\eta$ and  $u^\eta\to v^\ep$ uniformly as $\eta\to0+$. Then, the Legendre transform and Jensen's inequality  imply that  
\begin{eqnarray}\label{LJ}
\qquad \ep u^\eta(x)+\xi\cdot(c+u^\eta_x(x))-L(x,\xi)&\le& \ep u^\eta(x)+H(x,c+u^\eta_x(x))\\\nonumber
&\le& \{\ep v^\ep(\cdot)+H(x,c+v_x^\ep(\cdot))\}\ast\rho^\eta(x)\\\nonumber
&=& \{\ep v^\ep(\cdot)+H(\cdot,c+v_x^\ep(\cdot))\}\ast\rho^\eta(x)+O(\eta)\\\nonumber
&=&h(c)+O(\eta) \mbox{\quad for any $(x,\xi)\in\T^n\times\R^n$}.
\end{eqnarray}
Hence, we have with (\ref{d-holonomic}),
\begin{eqnarray*}
h(c)+O(\eta)&\ge&\ep \int_{\T^n\times\R^n} u^\eta(x)d\mu +\int_{\T^n\times\R^n} \xi\cdot(c+u_x^\eta(x))-L(x,\xi)\,d\mu\\
&=& \ep \int_{\T^n\times\R^n} u^\eta(x)d\mu +\int_{\T^n\times\R^n} \xi\cdot u_x^\eta(x)-\int_{\T^n\times\R^n}L^c(x,\xi)\,d\mu\\
&=&\ep u^\eta(x_0)-\int_{\T^n\times\R^n}L^c(x,\xi)\,d\mu.
\end{eqnarray*}
Letting $\eta\to0+$, we have (\ref{minimini}). 

We examine the support of each minimizing measures of (M2)$^\ep$. Let $\tilde{\mu}^\ep$ be an arbitrary  minimizing measure of (M2)$^\ep$. It follows from (\ref{LJ}) and the minimizing property of $\tilde{\mu}^\ep$ that for each $(x,\xi)\in\supp(\tilde{\mu}^\ep)$ we have
$$\ep u^\eta(x)+\xi\cdot(c+u^\eta_x(x))-L(x,\xi)-h(c)\to0\mbox{ as $\eta\to0+$}.$$
Since $u^\eta_x(\cdot)$ is uniformly bounded in $\T^n$, we have with each accumulating point $p$ of $\{u^\eta_x(x)\}_{\eta>0}$, which is bounded,
$$\ep v^\ep(x)+\xi\cdot(c+p)-L(x,\xi)-h(c)=0\mbox{ \,\,\,\, for each $(x,\xi)\in\supp(\tilde{\mu}^\ep)$}.$$
Hence, $\xi$ is bounded independently of $x$ and $\ep$, namely there exists a compact set $K\subset\T^n\times\R^n$ such that $\supp(\tilde{\mu}^\ep)\subset K$ for any $\ep>0$. If $v^\ep_x(x)$ exists, we have $\xi=H_p(x,c+v^\ep_x(x))$.  
%%%%%%%%%
\begin{Rem}\label{NOT}
A minimizing measure $\tilde{\mu}^\ep$ of {\rm(M2)}$^\ep$  is NOT $\phi^s_{L,c,\ep}$-invariant in general. 
Furthermore, the viscosity solution to \eqref{dHJ} is not differentiable at every point of the support of $\tilde{\mu}^\ep$ in general. 
\end{Rem}
%%%%%%%%%
\noindent 
In order  to check Remark \ref{NOT}, suppose that $v^\ep$ is not differentiable at $x=x_0$.  
Then, a minimizing curve $\gamma^\ep$ for $v^\ep(x_0)$ does not have $x_0$ as its $\alpha$-limit point.  
We will see that the support of the minimizing measure $\tilde{\mu}^\ep$ induced by $\gamma^\ep$ through (\ref{d2-holo11}) contains $(x_0,\xi_0)$ with $\xi_0:=\gamma^\ep{}'(0)$. 
Let $B_\delta$ be a $\delta$-ball of $(x_0,\xi_0)$ and $\varphi$ be a positive function supported inside $B_\delta$. There exists $\tau^-_\delta>0$ such that $\phi^{s}_{L,c,\ep}(x_0,\xi_0)$ first touches $\partial B_\delta$ at $s=\tau^-_\delta$ for $s<0$. Since $x_0$ is not an $\alpha$-limit point of $\gamma^\ep$, $\phi^{s}_{L,c,\ep}(x_0,\xi_0)$ never comes back to $B_\delta$ for $s<\tau^-_\delta$.  Hence, we have
\begin{eqnarray*}
\int_{\T^n\times\R^n}\varphi(x,\xi)d\tilde{\mu}^\ep&=&\lim_{i\to\infty}\frac{\ep}{1-e^{-\ep T_i}}\int^0_{-T_i}e^{\ep s}\varphi(\gamma^\ep(s),\gamma^\ep{}'(s))ds\\
&=&\ep\int^0_{\tau^-_\delta}e^{\ep s}\varphi(\gamma^\ep(s),\gamma^\ep{}'(s))ds>0.
\end{eqnarray*}
Since $\delta>0$ is arbitrary, we have $(x_0,\xi_0)\in\supp(\tilde{\mu}^\ep)$. We  see that $\tilde{\mu}^\ep$ is not $\phi^s_{L,c,\ep}$-invariant. Let $\tau^+_\delta>0$ be such that $\phi^{s}_{L,c,\ep}(x_0,\xi_0)$ first touches $\partial B_\delta$ at $s=\tau^+_\delta$ for $s>0$. Fix $\tau>0$. Take $\delta>0$ so small that $\phi^{-\tau}(B_\delta)\cap B_\delta=\emptyset$. Then $\phi^s_{L,c,\ep}(x_0,\xi_0)$ stays in $\phi^{-\tau}(B_\delta)$ only within $s\in(-\tau-\tau^-_\delta,-\tau+\tau^+_\delta)$ for $\delta\to0+$. Hence, with the indicator function $\chi_A$ for each $A\subset\T^n\times\R^n$ and a standard mollifier $\rho^\eta$ on $\T^n\times\R^n$, we have 
\begin{eqnarray*}
\tilde{\mu}^\ep(B_\delta)&=&\lim_{i\to\infty}\lim_{\eta\to0+}\int_{\T^n\times\R^n}\left(\chi_{B_\delta}\ast\rho^\eta\right)(x,\xi)d\tilde{\mu}_{T_i}\\
&=&\lim_{i\to\infty}\lim_{\eta\to0+}\frac{\ep}{1-e^{-\ep T_i}}\int^0_{-T_i}e^{\ep s}\left(\chi_{B_\delta}\ast\rho^\eta\right)(\gamma^\ast(s),\gamma^\ast{}'(s))ds\\
&=&1-e^{-\ep\tau^-_\delta},\\
\tilde{\mu}^\ep(\phi^{-\tau}_{L,c,\ep}(B_\delta))&=&\lim_{i\to\infty}\lim_{\eta\to0+}\int_{\T^n\times\R^n}\left(\chi_{\phi^{-\tau}_{L,c,\ep}(B_\delta)}\ast\rho^\eta\right)(x,\xi)d\tilde{\mu}_{T_i}\\
&=&\lim_{i\to\infty}\lim_{\eta\to0+}\frac{\ep}{1-e^{-\ep T_i}}\int^0_{-T_i}\left(e^{\ep s}\chi_{\phi^{-\tau}_{L,c,\ep}(B_\delta)}\ast\rho^\eta\right)(\gamma^\ast(s),\gamma^\ast{}'(s))ds\\
&=&e^{-\ep(\tau-\tau^+_\delta)}-e^{-\ep(\tau+\tau^-_\delta)},
\end{eqnarray*}
which are not identical. This implies also that $v^\ep$ is not necessarily differentiable at every point of $\pi\,\supp(\tilde{\mu}^\ep)$, e.g., at $x_0$ in the above example.  

Define the sets for each $\ep>0$,
$$\tilde{\hat{\mathcal{M}}}^\ep(c ):= \overline{\bigcup_{\tilde{\mu}^\ep} \supp(\tilde{\mu}^\ep)},\,\,\,\,\hat{\mathcal{M}}^\ep(c ):= \pi \tilde{\hat{\mathcal{M}}}^\ep(c ),$$
where the union is taken with respect to all minimizing measures of (M2)$^\ep$ for all $x_0\in\T^n$. 
Unlike (M1)$^\ep$, minimizers of (M2)$^\ep$ do not provide information on $\alpha$-limit points, and in particular,
$$\mbox{$\tilde{\hat{\mathcal{M}}}^\ep(c )\not\subset\tilde{\mathcal{M}}_\alpha^\ep(c)$, \,\,\,${\hat{\mathcal{M}}}^\ep(c )\not\subset{\mathcal{M}}_\alpha^\ep(c)$ in general.}$$
Nevertheless, since each minimizing measure  of (M2)$^\ep$ is supported on a compact set $K$ for any $\ep>0$, there exists weakly convergent sequence $\tilde{\mu}^{\ep_i}$ with $\ep_i\to0+$ ($i\to\infty$). Since (\ref{d-holonomic}) yields the holonomic condition (\ref{holonomic}) for $\ep\to0+$, the limit of $\tilde{\mu}^{\ep_i}$ is a minimizing measure of (M).           
%%%%%%%%%%%%%
%%%%%%%%%%%%%
%%%%%%%%%%%%%
%%%%%%%%%%%%%%%%
\setcounter{section}{3}
\setcounter{equation}{0}
\section{Error estimate in vanishing discount process}
We show error estimates between viscosity solutions of (\ref{HJ}) and (\ref{dHJ})  in the following two cases: 
\begin{enumerate}
 \item[(C1)] (\ref{HJ}) with $H(x,p)=\frac{1}{2}p^2-F(x):\T\times\R\to\R$  whose corresponding Hamiltonian system possesses hyperbolic stationary solutions. 
 \item[(C2)] (\ref{HJ}) with a $C^2$-solution which generates a KAM $n$-torus. 
\end{enumerate} 
In the first case, we could see the advantage to introducing the families of $\alpha$-limit points $\mathcal{M}_\alpha(c)$ and $\mathcal{M}_\alpha^\ep(c)$, where asymptotics of the discount dynamical systems including $\mathcal{M}_\alpha^\ep(c)$ for $\ep\to0+$ is also made clear.   

We refer to \cite[Theorem 1.1]{Davini}, \cite[Theorem 1.1]{MT} for the general convergence result in the vanishing discount process.   
Under (H1)-(H3), there exists a viscosity solution $v^\ast$ to \eqref{HJ} such that 
\[
v^\ep\to v^\ast\mbox{ uniformly on $\T^n$ as $\ep\to0+$}. 
\]
Moreover, $v^\ast$ is characterized by 
\begin{multline}\label{rep-form-asym}
v^\ast(x)=\sup\{v(x)\,|\,\mbox{$v\in {\rm Lip\,}(\T^n)$, viscosity  subsolution to (\ref{HJ}) such that}\\
\qquad\qquad \ \int_{\T^n\times\R^n}v\,d\mu\le0 \ \text{for all Mather measures}  \mu \ \text{of (M)}\}. 
\end{multline}

We remark that a criterion in this characterization is first observed in \cite[Corollary 4]{G}. We also refer to \cite{CCG} for an $L^2$-estimate with respect to a discount Mather measure between the gradient of the viscosity solution to \eqref{dHJ} and that of the mollified function of a viscosity solution to \eqref{HJ} on the support of the measure.

%%%%%%%%%%%%%%%%%%%%%%
\subsection{Error estimate in (C1)}
  We examine asymptotics of the discount problem for $\ep\to0+$ with    
\begin{eqnarray*}
H(x,p):=\frac{1}{2}p^2-F(x):\T\times\R\to\R,\quad F\in C^2(\T), \quad F(x)\ge 0\mbox{\quad in $\T$}.
\end{eqnarray*}  
Suppose that $F$ has a finite  number of zero points $0\le x_1<x_2<\ldots<x_I<1$ at which $F_{xx}(x_i)>0$ for all $i=1,2,\ldots,I$. Since $F_x(x_i)=0$, the points $(x_i,0)$ are hyperbolic stationary solutions of the Hamiltonian system (\ref{HS}), whose stable/unstable manifolds form a separatrix  
$$S_\pm:=\{ (x,\pm\sqrt{2F(x)})\,|\,x\in\T\}.$$   
Note that the slope of $S_\pm$ at $x_i$ is $\pm\sqrt{F_{xx}(x_i)}$. Set $c_\pm:=\pm\int_\T\sqrt{2F(x)}dx$. Due to such a structure of the Hamiltonian system, we see that the Hamilton-Jacobi equation  (\ref{HJ}) has the following properties: 
\begin{itemize}
\item $h(c )\equiv0$ for $c\in[c_-,c_+]$.
\item $\mathcal{M}_\alpha(v;c)=\{x_1,x_2,\ldots,x_I\}$ for each $c\in[c_-,c_+]$ and viscosity solution $v$ of (\ref{HJ}).
\item Let $c\in(c_-,c_+)$ and $v$ be a viscosity solution of (\ref{HJ}). Then the graph of $c+v_x$ consists of parts of $S_\pm$ and has at least one point of discontinuity in such a way that the jump is only from $S_+$ to $S_-$ (the entropy condition).   
\item Let $c\in(c_-,c_+)$. If a function $u(x)$ is such that $\int_\T c+u(x)dx=c$ and the graph of $c+u(x)$ consists of parts of $S_\pm$, and has discontinuity only from $S_+$ to $S_-$, then the primitive function of $u$ is a viscosity solution of (\ref{HJ}).
\item If $I\ge2$, such functions as $u$ may have $2$ to $I$ points of singularity and may exist uncountably many (shift the position of discontinuity right or left keeping $\int_\T c+u(x)dx=c$).   This means that there are uncountably many viscosity solutions of (\ref{HJ}) beyond constant difference.    
\end{itemize}
Note carefully that the points between $x_{i-1}$ and $x_i$ at which $F_x=0$ also give stationary  solutions of (\ref{HS}), where they are elliptic if $F_{xx}<0$ or hyperbolic if $F_{xx}>0$.

\indent We study the limit process $\ep\to0+$ of (\ref{dHJ}) for $c\in (c_-,c_+)$, exploiting dynamical properties of (\ref{dHS}) in the present situation. The first task is to specify $\mathcal{M}_\alpha^\ep(c)$. Here, $(\ref{dHS})$ is of the form
\begin{eqnarray}\label{1dHS}
x'(s)=p(s),\quad p'(s)=F_x(x(s))+c\ep-\ep p(s). 
\end{eqnarray} 
For each small $\ep>0$, (\ref{1dHS}) has  hyperbolic stationary solutions $(x_i^{c,\ep},0)$ which are $O(\ep)$-close to $(x_i,0)$ for $i=1,2,\ldots,I$; namely, $x_i^{c,\ep}$ is the value of  the implicit function derived from $F_x(x)+c\ep=0$ near $(\ep,x)=(0,x_i)$. Therefore we have the local stable/unstable manifolds of $(x_i^\ep,0)$, which tend to the exact local stable/unstable manifolds of $(x_i,0)$ as $\ep\to0$. Note carefully that the geometrical structure of the phase space of (\ref{1dHS}) is not equivalent to that of (\ref{HJ}) no matter how small $\ep>0$ is, i.e., the stable/unstable manifolds of $(x_i^\ep,0)$ (the extension of the local ones by $\phi^s_{H,c,\ep}$ for $s\in\R$) do not connect each other and hence do not form a separatrix like $S_\pm$ in general, where some of the stationary solutions of (\ref{HS}) may change to be asymptotically stable due to non-Hamiltonian perturbation in (\ref{1dHS}).  However the stable/unstable manifolds cannot be transversal in our autonomous $1$-dimensional setting. 
%%%%%%%%%%%
\begin{Thm}\label{d-Mather1}
 Set $\Gamma^{c,\ep}:=\{x_1^{c,\ep},\ldots,x_I^{c,\ep}\}$, where $x_i^{c,\ep}$ is the value of the implicit function derived from $F_x(x)+c\ep=0$ near $(\ep,x)=(0,x_i)$. For each sufficiently small $\ep>0$, it holds that 
\begin{enumerate}[{\rm(i)}]
\item $c+v_x^\ep(x)=0$ on $\mathcal{M}_\alpha^\ep(c)$ for each $c\in (c_-,c_+)$.
\item $\mathcal{M}_\alpha^\ep(c)\subset \Gamma^{c,\ep}$ for each $c\in (c_-,c_+)$.  
\item $|v^\ep|\le \beta \ep $ on $\mathcal{M}_\alpha^{\ep}(c)$  for each $c\in (c_-,c_+)$, where $\beta>0$ is a constant independent of $c$ and $\ep$.
\item There exists $c^\ast_+\in (0,c_+]$ and $c^\ast_-\in[c_-,0)$ such that $\mathcal{M}_\alpha^\ep(c)= \Gamma^{c,\ep}$ for all $c\in[c^\ast_-,c^\ast_+]$. 
%Furthermore, for the viscosity solution $v^\ep$ of (\ref{dHJ}) with $c=0$, $\graph(v_x^\ep)$ has a discontinuity within each interval $[x^{0,\ep}_i,x^{0,\ep}_{i+1}]\mod1$ switching from the unstable manifold of  $(x^{0,\ep}_i,0)$ to the unstable manifold of $(x^{0,\ep}_{i+1},0)$ 
\end{enumerate}  
\end{Thm}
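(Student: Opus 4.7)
The plan is to combine a phase-portrait analysis of \eqref{1dHS} as an $\ep$-perturbation of the Hamiltonian system with the structural results of Theorem \ref{derivative8}. For sufficiently small $\ep>0$, the equilibria of \eqref{1dHS} are precisely the points $(x,0)$ with $F_x(x)+c\ep=0$: by the implicit function theorem the hyperbolic saddles $(x_i,0)$ (where $F_{xx}(x_i)>0$) persist as the hyperbolic saddles $(x_i^{c,\ep},0)$, while the remaining critical points of $F$ (where $F_{xx}<0$) become asymptotically stable spirals, since the eigenvalues $(-\ep\pm\sqrt{\ep^2+4F_{xx}})/2$ have negative real part. The divergence of the vector field is identically $-\ep$, so Bendixson--Dulac excludes librational periodic orbits, while rotational periodic orbits on $\T\times\R$ are ruled out for $c\in(c_-,c_+)$ because their existence would force $c+v_x^\ep$ to have constant sign, incompatible with the jump structure of the viscosity solution away from $c=c_\pm$.

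To prove (i) and (ii), fix $x^\ep\in\mathcal{M}_\alpha^\ep(c)$. Since $H_p(x,p)=p$, Theorem \ref{derivative8}(ii) gives $\xi^\ep=c+v_x^\ep(x^\ep)$, and Theorem \ref{derivative8}(iii) implies that the entire orbit of \eqref{1dHS} through $(x^\ep,\xi^\ep)$ lies in $\tilde{\mathcal{M}}_\alpha^\ep(c)$. The $\alpha$-limit set of the minimizing curve $(\gamma^\ep,\gamma^\ep{}')$ realizing $x^\ep$ is a compact invariant subset of \eqref{1dHS} containing $(x^\ep,\xi^\ep)$, and by Poincar\'e--Bendixson together with the absence of periodic orbits, it must be a union of equilibria and their connecting orbits. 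A sink $y^*$ cannot occur as an $\alpha$-limit of a nontrivial backward trajectory, because its forward-stability would trap the forward orbit in a small neighborhood of $y^*$ once the trajectory enters it, contradicting the existence of the base point $\gamma^\ep(0)$ outside that neighborhood; the same forward-stability argument excludes interior points of saddle-to-sink connecting orbits. Hence $(x^\ep,\xi^\ep)$ must itself be a saddle equilibrium, yielding $\xi^\ep=0$ and $x^\ep\in\Gamma^{c,\ep}$.

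For (iii), using $c+v_x^\ep(x_i^{c,\ep})=0$ from (i), \eqref{dHJ} with $h(c)=0$ reduces at the saddle to $\ep v^\ep(x_i^{c,\ep})=F(x_i^{c,\ep})$. The implicit function theorem applied to $F_x(x)+c\ep=0$ near $x_i$ gives $x_i^{c,\ep}=x_i-c\ep/F_{xx}(x_i)+O(\ep^2)$ uniformly for $c$ in a bounded set, and a second-order Taylor expansion (using $F(x_i)=F_x(x_i)=0$ and $F_{xx}(x_i)>0$) yields $F(x_i^{c,\ep})=O(\ep^2)$ with a constant depending only on $F$; dividing by $\ep$ produces $|v^\ep(x_i^{c,\ep})|\le\beta\ep$.

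For (iv), we take $c=0$ as the reference: each unstable manifold $W^u(x_i^{0,\ep})$ has two branches on which the energy $\tfrac12 p^2-F(x)$ strictly decreases (since $\tfrac{d}{ds}(\tfrac12 p^2-F)=-\ep p^2<0$), so both branches flow into the adjacent spiral sinks without forming saddle-to-saddle connections, and their projections together cover $\T$ outside small neighborhoods of the sinks. For any $x$ in such a projection, the backward invariance of $\graph(c+v_x^\ep)$ under $\phi^s_{H,c,\ep}$, combined with Proposition \ref{differentiable}, forces the unique minimizing curve issuing from $x$ to coincide with the $W^u$-branch reaching $x$, whose $\alpha$-limit is the corresponding saddle; hence $x_i^{0,\ep}\in\mathcal{M}_\alpha^\ep(0)$ for every $i$. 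By continuity of the phase portrait in $c$, the same picture persists for $|c|\le c_0$ for some $c_0>0$, providing $c_\pm^\ast$ of the required signs. The main obstacle throughout is the rigorous exclusion of rotational periodic orbits and of saddle-to-saddle heteroclinic orbits from $\tilde{\mathcal{M}}_\alpha^\ep(c)$; a sharper treatment of these global bifurcations of the perturbed separatrices would be needed to extend the interval in (iv) from $[c_-^\ast,c_+^\ast]$ toward $[c_-,c_+]$.
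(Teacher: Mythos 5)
Your treatment of (i)--(iii) takes a genuinely different route from the paper: you analyse the $\alpha$-limit set via Poincar\'e--Bendixson, whereas the paper argues directly that a minimizing orbit with $\alpha$-limit point off $p=0$ can never touch the line $p=0$ (else $\graph(c+v_x^{\ep})$ would fail to be single-valued), concludes that the whole graph then lies strictly on one side of $p=0$, and kills this by passing to the limit $\ep\to0$, where such a graph is possible only for $c=c_\pm$. Your route is workable but leaves two holes. First, your exclusion of rotational periodic orbits appeals to ``the jump structure of the viscosity solution,'' which is a property of the \emph{non-discounted} equation; for \eqref{dHJ} you must still argue it, e.g.\ by noting that a rotational invariant circle in the graph forces $c+v^\ep_x=\pm\sqrt{2(F-\ep v^\ep)}$ everywhere and hence $c=\pm\int_{\T}\sqrt{2(F-\ep v^\ep)}\,dx\to c_\pm$, or by the paper's limiting argument. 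Second, ``$(x^\ep,\xi^\ep)$ is a saddle, hence $x^\ep\in\Gamma^{c,\ep}$'' is false as stated: the paper explicitly warns that critical points of $F$ with $F>0$ and $F_{xx}>0$ are \emph{also} hyperbolic stationary points of \eqref{1dHS}. You need the extra step the paper uses in (ii): since $\xi^\ep=0$, equation \eqref{dHJ} gives $F(x^\ep)=\ep v^\ep(x^\ep)\to0$, which rules out all interior equilibria (saddles and sinks alike, which also repairs the case where the base point of the minimizer sits inside the trapping neighborhood of a sink). Part (iii) is essentially the paper's computation.

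The serious gap is in (iv). The step ``the backward invariance of $\graph(c+v^\ep_x)$ forces the minimizing curve from $x$ to coincide with the $W^u$-branch reaching $x$'' is circular: over a given $x$ several unstable branches pass (the upper branch from $x_i^{c,\ep}$, the lower branch from $x_{i+1}^{c,\ep}$, and deeper layers of the spirals), and which one the graph selects is precisely what must be determined. Covering $\T$ by projections of unstable branches proves nothing about selection; indeed, in the paper's two-well example with $c\in(1,2)$ one has $\mathcal{M}_\alpha^\ep(c)=\{x_1^{c,\ep}\}$ even though $W^u(x_2^{c,\ep},0)$ projects onto most of $\T$, so an argument of your form would prove too much. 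The paper closes this by a quantitative estimate: if $x_i^{c,\ep}\notin\mathcal{M}_\alpha^\ep(c)$, the graph between $x_i^{c,\ep}$ and the saddle $x_j^{c,\ep}$ actually selected is a single monotone unstable branch strictly above or below $p=0$, so $v^\ep(x_i^{c,\ep})-v^\ep(x_j^{c,\ep})+c\cdot O(1)$ equals the area under that branch, which is bounded below by an $O(1)$ constant $\beta_3$; combined with $|v^\ep|\le\beta\ep$ on $\Gamma^{c,\ep}$ this is contradictory once $|c|<\beta_3-O(\ep)$, and this is what produces $c_\pm^\ast$ \emph{independent of} $\ep$. Your alternative, ``continuity of the phase portrait in $c$,'' cannot deliver such a uniform $c_0$: the portrait depends jointly on $(c,\ep)$ and the energy-monotonicity you use at $c=0$ (where $\dot E=-\ep p^2$) already fails for $c\neq0$ (where $\dot E=\ep p(c-p)$ can be positive), so the confinement of branches to single intervals is lost exactly where you need it.
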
 
%%%%%%%%%%%
\begin{proof}
(i) Fix $c\in(c_-,c_+)$. Suppose that there exists a sequence ${\ep_i}\to0+$ ($i\to\infty$) for which  $\mathcal{M}_\alpha^\ep(c)$ contains a point $x^\ast$ such that $c+v_x^{\ep_i}(x^\ast)\neq0$ ($v^{\ep_i}$ is differentiable on $\mathcal{M}_\alpha^{\ep_i}(c)$, as  shown in Section 3). Let $\gamma^{\ep_i}(s):(-\infty,0]\to \T$ be a minimizing curve that has $x^\ast$ as its $\alpha$-limit point. Then $(x(s),p(s)):=(\gamma^{\ep_i} (s),c+v_x^{\ep_i}((\gamma^{\ep_i} (s)))$, which solves (\ref{1dHS}) for $s\le0$, is not on any unstable manifolds of (\ref{1dHS}) nor in the region that is absorbed by asymptotically unstable stationary solutions of (\ref{1dHS}), because otherwise we necessarily have $\lim_{s\to-\infty}(x(s),p(s))=(x^\ast,0)=(x^\ast,c+v_x^{\ep_i}(x^\ast))$.   
Moreover $(x(s),p(s))$ cannot touch the line $p=0$ for $s<0$.  
If not, $(x(s),p(s))$ touches the line $p=0$ away from any stationary solution of (\ref{1dHS}) at some $s=s_0$.  
Then $x''(s_0)=p'(s_0)=F_x(x(s_0))+c{\ep_i}\neq0$, $x'(s_0)=0$ and $x(s)$ takes a local maximum or minimum at $s=s_0$. 
Hence $p(s)$ changes its sign at $s=s_0$ and $\{ (x(s),p(s))\,\,|\,\,s\in[s_0-\delta,s_0+\delta]\}$ cannot be a single-valued graph defined on $\{x(s)\,\,|\,\,s\in[s_0-\delta,s_0+\delta]\}$. 
However $(x(s),p(s))$ must be on $\graph(c+v_x^{\ep_i})$ for $s<0$, and we reach a contradiction.  Hence,  $\{ (x(s),p(s))\,\,|\,\,s<0\}\subset \graph(c+v_x^{\ep_i})$ must be strictly above or below the line $p=0$, where $x(s)$ moves over $\T$ for $s<0$. 

Therefore, we conclude that  $\graph(c+v_x^{\ep_i})$ is smooth and strictly above or below the line $p=0$ for all $i$. Then, we have a subsequence of $c+v_x^{\ep_i}$ that converges to $c+v_x$ pointwise a.e. as $i\to\infty$ (we do not yet assert that the whole sequence is convergent), where $v$ is a viscosity solution. $\graph(c+v_x)$ is above or below the line $p=0$, which is only possible for $c=c_\pm$.  Thus we obtain the conclusion.     

(ii)  Fix $c\in(c_-,c_+)$. Let $\ep_0>0$ and $\delta>0$ be such that the implicit function $x=x_i^{c,\ep}:[-\ep_0,\ep_0]\to[x_i-\delta,x_i+\delta]$ is one to one and onto for all $i=1,\ldots,I$.  Since $F(x)=0$ only for $x=x_1,\ldots,x_I$, there exists $\eta>0$ such that $F(x)>\eta$ for all $x\not\in [x_i-\delta,x_i+\delta]$, $i=1,\ldots,I$. 
Let  $x^\ast$ be an arbitrary point of $\mathcal{M}_\alpha^\ep(c)$. Since $c+v_x^\ep(x^\ast)=0$ due to (i) and $v^\ep$ is uniformly bounded, it follows from (\ref{dHJ}) that for sufficiently small $\ep>0$,
$$F(x^\ast)=\ep v^\ep(x^\ast)\le\eta.$$
Hence, $x^\ast\in[x_i-\delta,x_i+\delta]$ for some $i$. Since $(x^\ast,0)$ is a stationary solution  of (\ref{1dHS}), we have $F_x(x^\ast)+c\ep=0$, which implies $x^\ast=x_i^{c,\ep}$. 

(iii)  In what follows, $\beta_j>0$ are some constants independent of $\ep$ and $c$. Let  $x^\ast$ be an arbitrary point of $\mathcal{M}_\alpha^\ep(c)$.  Since $x^\ast$ is equal to $x_i^{c,\ep}$ for some $i$ due to (ii), we have $|x^\ast-x_i|\le \beta_1\ep$. Hence, we observe with (i) and (\ref{dHJ}) that 
\begin{eqnarray*}
\ep v^\ep(x^\ast)=F(x^\ast)=F(x^\ast)-F(x_i)=F_x(x_i)+\frac{1}{2}F_{xx}(x_i+\theta (x^\ast-x_i))(x^\ast-x_i)^2,
\end{eqnarray*}
where $\theta\in(0,1)$. Since $F_x(x_i)=0$, we conclude that $|v^\ep(x^\ast)|\le \beta_2 \ep$.

(iv) Suppose that $\Gamma^{c,\ep}\setminus \mathcal{M}_\alpha^\ep(c)$ is non-empty. Let  $x^\ast$ be an arbitrary point of $\Gamma^{c,\ep}\setminus \mathcal{M}_\alpha^\ep(c)$. Since $x^\ast$ is equal to $x_i^{c,\ep}$ for some $i$, we have $|x^\ast-x_i|\le \beta_1\ep$. By (\ref{dHJ}), we observe that if $v^\ep$ is differentiable at $x^\ast$ 
\begin{eqnarray*}
\ep v^\ep(x^\ast)&=&F(x^\ast)-\frac{1}{2}(c+v_x^\ep(x^\ast))^2\\
&\le& F(x^\ast)=F_x(x_i)+\frac{1}{2}F_{xx}(x_i+\theta (x^\ast-x_i))(x^\ast-x_i)^2
\le \beta_2  \ep^2
\end{eqnarray*}
for some $\theta\in(0,1)$, 
and hence we have 
\begin{eqnarray}\label{4112}
v^\ep(x^\ast)\le \beta_2\ep.
\end{eqnarray}
If $v^\ep$ is not differentiable at $x^\ast$, take a point of differentiability of $v^\ep$ near $x^\ast$ and use continuity of $v^\ep$ to obtain (\ref{4112}).   

Since $x^\ast\not\in\mathcal{M}_\alpha^\ep(c)$, the minimizing curve $\gamma^\ep:(-\infty,0]\to\R$ (we do not take ``mod $1$'' now) starting from a point $x$ of differentiability of $v^\ep$ near $x^\ast$ is such that $(\gamma^\ep(s),c+v_x^\ep(\gamma^\ep(s)))$ stays on the unstable manifold of a certain stationary solution $(x_j^{c,\ep},0)$ of (\ref{1dHS}) for $s\le0$, where $x_j^{c,\ep}\in \mathcal{M}_\alpha^\ep(c )$. Due to the argument in the proof of (i),  $(\gamma^\ep(s),c+v_x^\ep(\gamma^\ep(s)))$ cannot touch the line $p=0$ for $s\le0$. If $c+v_x^\ep(x)>0$ (resp., $c+v_x^\ep(x)<0$),  then $(\gamma^\ep(s),c+v_x^\ep(\gamma^\ep(s)))$ is above (resp.,  below) the line $p=0$ and falls into  $(x_j^{c,\ep}+m, 0)$ with $m=0$ or $-1$ from the right (resp.,  $m=0$ or $1$ from the left) as $s\to-\infty$. Therefore, sending $x$ to $x^\ast$, we see  that 
\begin{eqnarray*}
\beta_3&<& \int _{x_j^{c,\ep}+m}^{x^\ast} c+ v^\ep_x(x) dx=c(x^\ast-x_j^{c,\ep}-m)+v(x^\ast)-v^\ep(x_j^{c,\ep})\\
(\mbox{resp., } -\beta_3&>& \int ^{x_j^{c,\ep}+m}_{x^\ast}c+ v^\ep_x(x) dx=c(x_j^{c,\ep}+m-x^\ast)+v^\ep(x_j^{c,\ep})-v^\ep(x^\ast)),
\end{eqnarray*}
where $\beta_3>0$ is a number smaller than the area trimmed by the line $x=x^\ast$ and the unstable manifold of $(x_j^{0,\ep}+m, 0)$ within $x\in[x_j^{0,\ep}+m, x^\ast]$ (resp.,   within $x\in[x^\ast,x_j^{0,\ep}+m]$).  Here $\beta_3$ is of $O(1)$ for $\ep\to0$. Since $|v^\ep(x_j^{c,\ep})|\le \beta \ep$ due to (iii), we have 
\begin{eqnarray*}
&&\beta_2\ep\ge v^\ep(x^\ast)>\beta_3-\beta \ep - c(x^\ast-x_j^{c,\ep}-m)\\
&&(\mbox{resp., } \beta_2\ep\ge v^\ep(x^\ast)>\beta_3-\beta \ep + c(x_j^{c,\ep}+m-x^\ast) ).
\end{eqnarray*}
If $|c|$ is less than a certain value,  we reach a contradiction. 
%We conclude the proof by showing that the set of all points $c\in[c_-,c_+]$ for which $\mathcal{M}_\alpha^\ep(c_j)= \Gamma^{c_j,\ep}$ holds, is closed. It follows from (\ref{ddd-value}) that the viscosity solution of (\ref{dHJ}), denoted here by $v^{c,\ep}$, is continuous with respect to $c$.  
%Fix a sufficiently small $\ep>0$. If $|c|$ is small, a similar reasoning to the proof of 4. of Lemma \ref{d-Mather1} is available and $\mathcal{M}_\alpha^\ep(c)= \Gamma^{c,\ep}$. Let $c^\ast$ be the supremum of all $c\in[0,c_+]$ such that $\mathcal{M}_\alpha^\ep(c)= \Gamma^{c,\ep}$ holds. Take a sequence $c_j\nearrow c^\ast$ ($j\to\infty$) such that $\mathcal{M}_\alpha^\ep(c_j)= \Gamma^{c_j,\ep}$ for all $j$.  By Lemma \ref{d-Mather1}, we see that $c_j+v^{c_j,\ep}_x(x^{c_j,\ep}_i)=0$ for all $i=1,\ldots,I$. By (\ref{dHJ}) and  continuity  with respect to $c$, we have 
%$$(c^\ast+v_x^{c^\ast,\ep}(x^{c^\ast,\ep}_i))^2 -(c_j+v_x^{c_j,\ep}(x^{c_j,\ep}_i))^2=-\ep (v^{c^\ast,\ep}(x^{c^\ast,\ep})-v^{c_j,\ep}(x^{c_j,\ep}))\to0\mbox{ as $j\to\infty$}.$$  
%Therefore $c^\ast+v_x^{c^\ast,\ep}(x^{c^\ast,\ep}_i)=0$ for all $i=1,\ldots,I$ and $\mathcal{M}_\alpha^\ep(c^\ast)= \Gamma^{c^\ast,\ep}$.
\end{proof}
%%%%%%%%%%%%%%
\begin{Rem}\label{same} 
$\mathcal{M}_\alpha^\ep(c)= \Gamma^{c,\ep}$ does not hold for every $c\in(c_-,c_+)$ in general, 
which implies that not every Mather measure is obtained through  {\rm(M1)}$^\ep$ in the vanishing discount process.
\end{Rem}
%%%%%%%%%%
\noindent For more detailed discussion in regards to Remark \ref{same}, let us consider the following example: $F\in C^2(\T)$, $F(x)\ge0$, $F(x)=0$ for $x\in\{0,\,\,1/2\}$, $c_\pm:=\pm\int_{\T}\sqrt{2F(x)}dx=\pm2$, $S_1:=\int_0^{1/2}\sqrt{2F(x)}dx=1/2$. Set $x_1=0$ and  $x_2=1/2$. Then, for each sufficiently small $\ep>0$, we have
$$\mathcal{M}_\alpha^\ep(c)=\{x_1^{c,\ep}\}\,\,\,\,\mbox{for }c\in(1,2),\quad \mathcal{M}_\alpha^\ep(c)=\{x_2^{c,\ep}\}\,\,\,\,\mbox{for }c\in(-2,-1).$$
In fact, let $c\in(1,2)$ (resp., $c\in(-2,-1)$) and suppose that there exists $\ep_j\to 0$ such that   
$x_2^{c,\ep_j}\in \mathcal{M}_\alpha^{\ep_j}(c)$ (resp., $x_1^{c,\ep_j}\in \mathcal{M}_\alpha^{\ep_j}(c)$ ) for all $j$. By (\ref{d-value-func}) with $\gamma(s)\equiv x_1,x_2$ and letting $T\to\infty$, we have $v^{\ep_j}(x_1)\le0$, $v^{\ep_j}(x_2)\le0$. Hence, an accumulating point $v^\ast$ of $\{v^{\ep_j}\}$ satisfies $v^{\ast}(x_1)\le0$, $v^{\ast}(x_2)\le0$. In particular, by (iii) of Theorem \ref{d-Mather1}, we have $O(\ep_j)=v^{\ep_j}(x_2^{c,\ep_j})=v^{\ep_j}(x_2)+O(\ep_j)$ (resp., $O(\ep_j)=v^{\ep_j}(x_1^{c,\ep_j})=v^{\ep_j}(x_1)+O(\ep_j)$), and hence $v^\ast(x_2)=0$ (resp., $v^\ast(x_1)=0$). Since  $\graph(c+v^\ast_x)$ consists of parts of the graph of $\pm\sqrt{2F(x)}$ with  jump down discontinuity, we see that  
\begin{eqnarray*}
\frac{1}{2}=S_1&\ge&|\int_{x_1}^{x_2}c+v^\ast_x(x)\,dx|=|c(x_2-x_1)+v^\ast(x_2)-v^\ast(x_1)|=|\frac{c}{2}-v^\ast(x_1)|\ge\frac{c}{2},\\
\mbox{(resp., }\frac{1}{2}&\ge&|\int_{x_1}^{x_2}c+v^\ast_x(x)\,dx|=|c(x_2-x_1)+v^\ast(x_2)-v^\ast(x_1)|=|\frac{c}{2}+v^\ast(x_2)|\ge
-\frac{c}{2}).
\end{eqnarray*}
If $c>1$ (resp., $c<-1$), we reach a contradiction. 

In this example, Mather measures are Dirac measures supported by $x_1$ and $x_2$. It is worth emphasizing that we can clearly see the selection of Mather measures by (M1)$^\ep$ depending on $c$.

Notice here that the selection of viscosity solutions and Mather measures in the vanishing discount process is totally different from that in the vanishing viscosity process. 
Indeed, if we assume $0<F_{xx}(x_1)< F_{xx}(x_2)$, then for each $c\in(-2,2)$ the vanishing viscosity method selects the viscosity solution $\tilde{v}$ up to constant such that $\graph(c+\tilde{v}_x)$ contains the local unstable manifold of $(x_i,0)$ in a neighborhood of $x_i$ only for $i=1$, which has only one point of discontinuity \cite{JKM}, \cite{Bessi}, \cite{A} (cf.,  if $c=0$, for instance, $\graph(c+v^\ast_x)$ has two points of discontinuity due to $v^\ast(x_1)=v^\ast(x_2)=0$, which 
can also be observed by the formula \eqref{rep-form-asym}). 
We can also see a difference of the selection of Mather measures associated with the vanishing viscosity process (see \cite{A1} for details). 
%If $F_{xx}(x_1)= F_{xx}(x_2)$, we do not yet know which viscosity solution is selected by the vanishing viscosity method. If $F_{xx}(x_1)\neq F_{xx}(x_2)$ (say, $F_{xx}(x_1)< F_{xx}(x_2)$), then for each $c\in(-2,2)$ the vanishing viscosity method selects the viscosity solution $v^{\ast\ast}$ up to constant such that $\graph(c+v_x^{\ast\ast})$ contains the local unstable manifold of $(x_i,0)$ in a neighborhood of $x_i$ only for $i=1$, which has only one point of discontinuity \cite{JKM}, \cite{Bessi}, \cite{A} (cf.,  if $c=0$, for instance, $\graph(c+v^\ast_x)$ has two points of discontinuity due to $v^\ast(x_1)=v^\ast(x_2)=0$).     
%%%%%%%%%%%
%\noindent In order to check Remark \ref{same}, Consider the following example of $F(x)$: $F(x)\ge0$, $F(x)=0$ for $x\in\{0,\,\,1/2\}$, $c_\pm:=\pm\int_{\T}\sqrt{2F(x)}dx=\pm2$, $S_1:=\int_0^{1/2}\sqrt{2F(x)}dx=1/2$. Here $x_1=0$ and  $x_2=1/2$. 
%Suppose that $\mathcal{M}_\alpha^\ep(c)= \Gamma^{c,\ep}$ holds for every $c\in(c_-,c_+)$. Then, by (iii) of Theorem \ref{d-Mather1}, $v^\ep(x)=O(\ep)$ on $\mathcal{M}_\alpha^\ep(c)$. Since $v^\ep$ is uniformly Lipschitz, $v^\ep(x)=O(\ep)$ on  $\mathcal{M}_\alpha(c)=\{x_1,x_2\}$.   Hence, an accumulating point $v$ of $\{v^\ep\}_{\ep>0}$ satisfies $v(x)=0$ on $\mathcal{M}_\alpha(c)$, where $v$ is a viscosity solution of (\ref{HJ}). Note that $\graph(c+v_x)$ consists of parts of the graph of $\pm\sqrt{2F(x)}$ with  jump down discontinuity.   Then, 
%$$S_1\ge|\int_{x_1}^{x_2}c+v_x(x)dx|=|c(x_2-x_1)|=\frac{|c|}{2}.$$
%Hence we have $|c|\le1$. If $|c|>1$, we reach a contradiction.

%%%%%%%%%%
Now it is easy to observe that 
\begin{itemize}
\item  For $c\in[c^\ast_-,c^\ast_+]$, $v^\ep(x)=O(\ep)$ on $\{x_1,\ldots,x_I\}$ for any $\ep>0$ and any accumulating point $v^\ast$ of $\{v^\ep\}_{\ep>0}$ vanishes on $\mathcal{M}_\alpha(c)=\{x_1,\ldots,x_I\}$. Such $v^\ast$ is unique because of (iv) of Theorem \ref{invariance} and therefore $v^\ep\to v^\ast$ as $\ep\to0$ in the whole sequence. This is a specific example of the general convergence result in \cite{Davini} and \cite{MT}.
\item There may exist $c\in(c_-,c_+)$ for which an accumulating point $v^\ast$ of $\{v^\ep\}_{\ep>0}$ does not vanishes on $\{x_1,\ldots,x_I\}$.  
\end{itemize}

Next, we obtain error estimates between $v^\ep$ and $v^\ast=\lim_{\ep\to0+}v^\ep$. Our strategy is to estimate the time for each minimizing curve to fall into $\mathcal{M}_\alpha^\ep(c), \mathcal{M}_\alpha(c)$ along the unstable manifolds and to compare the value functions (\ref{value-func}) and (\ref{d-value-func}). In what follows,  $x_{I+k}:=x_{k}+1$ and each interval $K\subset\R$ is identified with $K\mod1:=\{x\mod1\,|\,x\in K\}$. 

Let $\bar{x}_i$ be the midpoint of the interval $[x_i,x_{i+1}]$, $i=1,2,\ldots,I$.  Since the slope of the local stable/unstable manifold of each $(x_i,0)$, given by the graph of $\pm\sqrt{2F(x)}$, is strictly away from $0$ at $(x_i,0)$, there exists $b>0$ such that 
$$\mbox{$\sqrt{2F(x)}\ge b(x-x_i)$ on $[x_i,\bar{x}_i]$, \,\, $\sqrt{2F(x)}\ge -b(x-x_{i+1})$ on $[\bar{x}_i,x_{i+1}]$}.$$
Hence, each trajectory $(x(s),p(s))$ on the unstable manifold of $(x_i,0)$ with $x(0)\in(x_i,x_{i+1})$ satisfies the estimate 
\begin{eqnarray*}
&&\mbox{if $x(0)\in(\bar{x}_i,x_{i+1}]$, }\\
&&\qquad x'(t)=p(t)\ge -b(x(t)-x_{i+1}) \mbox{ and } x_{i+1}-x(-\tau)\ge(x_{i+1}-x(0))e^{b\tau},\\
&&\mbox{if $x(0)\in[x_i,\bar{x}_{i})$, }\\
&&\qquad x'(t)=p(t)\ge b(x(t)-x_{i}) \mbox{ and } 0<x(-\tau)-x_{i}\le(x(0)-x_{i})e^{-b\tau}.
\end{eqnarray*} 
Therefore, for each $\delta>0$, the time $\tau>0$ for which the backward trajectory with $x(0)\in(\bar{x}_i,x_{i+1}-\delta]$ reaches the midpoint $(\bar{x}_i,\sqrt{2F(\bar{x}_i)})$ is estimated as
\begin{eqnarray}\label{estimate} 
\tau\le\frac{1}{b}\log\frac{1}{\delta}.
\end{eqnarray}   
The same estimate holds for the time $\tau>0$ for which the backward trajectory with $x(0)\in(\bar{x}_i+\delta,\bar{x}_{i}]$ reaches the $\delta$-cylinder $[x_i-\delta,x_i+\delta]\times \R$. In this way, we see that each trajectory on the stable/unstable manifolds of $(x_i,0)$ starting away from  $\delta$-cylinders $[x_i-\delta,x_i+\delta]\times \R$ ($i=1,\ldots,I$) reaches a $\delta$-cylinder $[x_j-\delta,x_j+\delta]\times \R$ ($j=i-1$, $i$ or $i+1$) taking at most  time $T>0$:    
\begin{eqnarray}\label{estimate2} 
T\le\frac{2}{b}\log\frac{1}{\delta}.
\end{eqnarray}   

Consider the case with $\ep>0$. For each $i=1,\ldots,I$, the stable/unstable manifold of $(x^{c,\ep}_i,0)$ grows from $(x_i^{c,\ep},0)$ and has the first contact with either the line $p=0$ at $(x^\ast,0)$, where $x^\ast$ is not necessarily certain $(x_j^{c,\ep},0)$, or the line $x=x_i^{c,\ep}$ at $(x^{c,\ep}_i,p^\ast)$ after passing over whole $\T$, where $p^\ast$ is not necessarily $0$. Let $(x(s),p(s))$ be any trajectory on the part of the stable/unstable manifold connecting $(x_i^{c,\ep},0)$ and $(x^\ast,0)$ or connecting $(x_i^{c,\ep},0)$ and $(x_i^{c,\ep},p^\ast)$. Consider $\delta$-cylinders $[x_j^{c,\ep}-\delta,x_j^{c,\ep}+\delta]\times \R$, $j=1,\ldots,I$ and the $\delta$-cylinder $[x^\ast-\delta,x^\ast+\delta]\times \R$.  Since the local stable/unstable manifolds of $(x_i^{c,\ep},0)$ are close to those of $(x_i,0)$ for sufficiently small $\ep>0$,  we see, with a smaller $b>0$ if necessary, that the above trajectory $(x(s),x(s))$ takes at most time $T>0$ estimated as (\ref{estimate2}) to first touch one of the above cylinders after leaving a point outside of any of the cylinders.  
%%%%%%%
\begin{Thm}\label{error22}
Let $\delta(\ep)>0$ be any function tending to $0+$ with $\delta(\ep)\ge\alpha\ep$ as $\ep\to0+$, where $\alpha$ is a constant. Then, for each $c\in(c_-,c_+)$, the limit $v^\ast=\lim_{\ep\to0+}v^\ep$ satisfies 
$$|v^\ep(x)-v^\ast(x)|\le \beta(\ep |\log\delta(\ep)|^2+\delta(\ep))  \mbox{  \,\,\,on $\T$ \,\,\,\,as $\ep\to0+$,}$$
where $\beta>0$ is a constant independent of $c,\ep$. 
\end{Thm}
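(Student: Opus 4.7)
The plan is to propagate the error from an arbitrary $x\in\T$ to the $\alpha$-limit sets, where Theorem \ref{d-Mather1}(iii) already gives the pointwise bound $|v^\ep|\le\beta\ep$ on $\mathcal{M}_\alpha^\ep(c)$. Two ingredients will drive the estimate: the hyperbolicity-based time bound (\ref{estimate2}), which extends to the perturbed system (\ref{1dHS}) because hyperbolicity of the stationary points $(x_i,0)$ persists under the small non-Hamiltonian perturbation and $x_i^{c,\ep}=x_i+O(\ep)$; and the variational identities (\ref{ddd-value}) for $v^\ep$ and (\ref{value-func}) for $v^\ast$, evaluated along well-chosen minimizing curves.

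For the upper bound on $v^\ast-v^\ep$, I fix $x\in\T$, take a minimizing curve $\gamma^\ep$ of $v^\ep(x)$ whose $\alpha$-limit $y^\ep$ lies in $\mathcal{M}_\alpha^\ep(c)\subset\Gamma^{c,\ep}$, and pick $T_\ep\le\tfrac{2}{b}\log\delta^{-1}$ so that $\gamma^\ep(-T_\ep)\in[y^\ep-\delta,y^\ep+\delta]$. Using (\ref{ddd-value}) as an exact identity for $v^\ep$ at $\tau=T_\ep$ and the subsolution inequality (\ref{value-func}) for $v^\ast$ along the same curve, subtracting yields
\[
v^\ast(x)-v^\ep(x) \le \int_{-T_\ep}^0 (1-e^{\ep s})[L-c\gamma^{\ep\prime}+h(c)]\,ds + (v^\ast-v^\ep)(\gamma^\ep(-T_\ep)) + (1-e^{-\ep T_\ep})v^\ep(\gamma^\ep(-T_\ep)).
\]
The integral is $O(\ep T_\ep^2)=O(\ep|\log\delta|^2)$ via $|1-e^{\ep s}|\le\ep|s|$ and the uniform Lipschitz bound on minimizing curves; the third term is $O(\ep|\log\delta|)$; and by uniform Lipschitz continuity of $v^\ep,v^\ast$ with $|\gamma^\ep(-T_\ep)-y^\ep|\le\delta$, the middle term reduces to $(v^\ast-v^\ep)(y^\ep)+O(\delta)$. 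A symmetric argument swapping roles---applying (\ref{value-func}) exactly along a minimizing curve $\gamma^\ast$ of $v^\ast(x)$ converging to some $y^\ast\in\mathcal{M}_\alpha(c)=\{x_1,\ldots,x_I\}$ and treating $v^\ep$ via (\ref{d-value-func}) as an inequality---gives the matching upper bound for $v^\ep-v^\ast$ with residual $(v^\ep-v^\ast)(y^\ast)$.

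The hard part will be closing the bootstrap at the $\alpha$-limit points. Theorem \ref{d-Mather1}(iii) together with $|y^\ep-x_{i(\ep)}|=O(\ep)$ reduces $(v^\ast-v^\ep)(y^\ep)$ to $v^\ast(x_{i(\ep)})+O(\ep)$, and an analogous reduction handles $(v^\ep-v^\ast)(y^\ast)$. The delicate point is obtaining a quantitative $O(\ep)$ bound on $|v^\ast(x_j)-v^\ep(x_j)|$ for each $j\in\{1,\ldots,I\}$: qualitatively $v^\ast(x_j)=\lim_{\ep\to0}v^\ep(x_j^{c,\ep})$ vanishes whenever $x_j^{c,\ep}\in\mathcal{M}_\alpha^\ep(c)$ (by (iii) of Theorem \ref{d-Mather1}), but what is needed is a rate. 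The plan is to apply the inequalities already derived to the finite family of base points $x=x_j$ themselves, producing a closed system of bounds in the finitely many unknowns $(v^\ast-v^\ep)(x_j)$ whose residual terms are all $O(\ep|\log\delta|^2+\delta)$; a finite-step bootstrap then absorbs the residuals and yields the claimed uniform estimate. Since the hyperbolic rate $b$, the common Lipschitz constant, and the $\beta$ of Theorem \ref{d-Mather1}(iii) are uniform in $c\in(c_-,c_+)$ for small $\ep$, and since the hypothesis $\delta(\ep)\ge\alpha\ep$ ensures $\delta$ dominates the Lipschitz-propagated $O(\ep)$ pieces, the final $\beta$ is independent of $c$ and $\ep$.
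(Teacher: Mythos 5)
Your overall strategy (anchor the error on the $\alpha$-limit sets via Theorem \ref{d-Mather1}(iii), then propagate to general $x$ by comparing \eqref{value-func} and \eqref{d-value-func} along minimizing curves) is the paper's strategy, and your first-step inequalities are essentially the ones the paper writes down. But there are two genuine gaps, and they sit exactly at the two places you flag as delicate.

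First, the time bound. You choose $T_\ep\le\tfrac{2}{b}\log\delta^{-1}$ so that $\gamma^\ep(-T_\ep)$ lies within $\delta$ of the $\alpha$-limit $y^\ep$. The hyperbolicity estimate \eqref{estimate2} does not give this: it only guarantees that a trajectory on an unstable manifold reaches a $\delta$-cylinder around an \emph{adjacent} stationary point within time $\tfrac{2}{b}\log\tfrac1\delta$. In the general case the $\alpha$-limit $y^\ep=x_{i+k}^{c,\ep}$ can be several stationary points away from $x$, and the graph of $c+v^\ep_x$ on $[x_{i+1}^{c,\ep},x_{i+k}^{c,\ep}]$ is the unstable manifold of $(x_{i+k}^{c,\ep},0)$, which passes over the intermediate points $x_{i+2}^{c,\ep},\dots,x_{i+k-1}^{c,\ep}$ at heights $|p_j|$ that tend to $0$ as $\ep\to0$ (the separatrix reforms in the limit). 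The transit time past each intermediate hyperbolic point is of order $\log(1/|p_j|)$, which is controlled by $\ep$ (and by the size of the separatrix splitting), not by $\delta$; it is not bounded by $\tfrac{2}{b}\log\tfrac1\delta$. The paper avoids this entirely by propagating interval by interval: each minimizing curve is followed only until it enters the $2\delta$-cylinder of the \emph{nearest} $x_j$ (or of a turning point in $K^{c,\ep}$), the previously established estimate is used there, and Lipschitz continuity of $v^\ep,v^\ast$ carries the estimate across each $4\delta$-wide cylinder; after at most $I$ steps this reaches a point of $\mathcal{M}_\alpha^\ep(c)$, and the constant only picks up a factor $I$.

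Second, and more seriously, the bootstrap at the base points does not close. Every $x_j$ belongs to $\mathcal{M}_\alpha(c)=\{x_1,\dots,x_I\}$, so $c+v^\ast_x(x_j)=0$ and the minimizing curve of $v^\ast$ from $x_j$ is the constant curve; hence the inequality you derive for $v^\ep(x_j)-v^\ast(x_j)$ from the $v^\ast$-minimizer reads $a_j\le a_j+R$ and carries no information. Your system therefore gives only the one-sided bound $v^\ast(x_j)-v^\ep(x_j)\le R$ (via the anchor $v^\ast=0$ on $\lim\mathcal{M}_\alpha^\ep(c)$ and $|v^\ep|\le\beta\ep$ there), and no upper bound on $v^\ep(x_j)-v^\ast(x_j)$ at those $x_j$ with $x_j^{c,\ep}\notin\mathcal{M}_\alpha^\ep(c)$ and $v^\ast(x_j)<0$ --- precisely the situation the paper warns can occur ("there may exist $c$ for which $v^\ast$ does not vanish on $\{x_1,\dots,x_I\}$"). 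No finite iteration of these inequalities can recover it. The missing ingredient is the paper's case analysis (i)/(ii) on the sign of $c+v^\ep_x$ between consecutive points of $\mathcal{M}_\alpha^\ep(c)$, which identifies $\graph(c+v^\ep_x)$ on $[x_{i}^{c,\ep},x_{i+k}^{c,\ep}]$ with a specific unstable manifold and forces the matching structure (same branches of $S_\pm$, matching discontinuities) on $\graph(c+v^\ast_x)$; only then does the interval-by-interval propagation from the anchored endpoint $x_{i+k}$ deliver the two-sided estimate on the whole block $[x_i,x_{i+k}]$. Without this dynamical input the estimate at the non-anchored $x_j$ is simply not obtainable from the variational inequalities alone.
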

%%%%%%%
\begin{Rem} 
For each $\nu\in(0,1)$, we may take $\delta(\ep)=\ep^\nu$ obtaining $|v^\ep(x)-v^\ast(x)|\le \tilde{\beta}\ep^\nu$ as $\ep\to0+$.
\end{Rem}
%%%%%%%%%
\begin{proof}[{\it Proof of Theorem {\rm\ref{error22}}}]
We compare the two value functions (\ref{value-func}) and (\ref{d-value-func}), where $h(c )=0$ and $L(x,\xi)=\frac{1}{2}\xi^2+F(x)$. In what follows, $\alpha_i>0$ are some constants independent of $\ep$ and $c$. For any $\ep\in(0,\ep_0)$ with a sufficiently small $\ep_0>0$, it holds that $|x^{c,\ep}_i-x_i|< \alpha_1\ep$ for all $i$ and $c$. Take a number $\delta\ge\alpha_1\ep$.

First we deal with the case of $c\in[c_-^\ast,c_+^\ast]$, where $\mathcal{M}_\alpha^\ep(c)=\Gamma^{c,\ep}$ and $|v^\ep(x)|\le \alpha_2 \ep$ on $\mathcal{M}_\alpha^\ep(c )$ for $\ep\to0+$. Hence, $v(x)=0$ on $\mathcal{M}_\alpha(c)=\{x_1,\ldots,x_I\}$.  Since $v$ and $v^\ep$ are Lipschitz, we see that 
\begin{equation}\label{41}
|v^\ep(x)-v^\ast(x)|\le \alpha_2 \delta \quad\mbox{ on $[x_i-2\delta,x_i+2\delta]$ \quad for $i=1,\ldots,I$.}
\end{equation}
Suppose that the unstable manifold growing from $(x_i^{c,\ep},0)$ first touches the line $p=0$  away from any $(x_j^{c,\ep},0)$, where the contact point (there are two such contact points at most for each $i$) is denoted by $(x^\ast,0)$,  and contains a point of $\graph(c+v^\ep_x)$ in the $\delta$-cylinder $(x^\ast-\delta,x^\ast+\delta)\times\R$. Then, we define the interval $K^{c,\ep}_i(x^\ast)$ as $K^{c,\ep}_i(x^\ast):=(x^\ast-\delta,x^\ast+\delta)$. Let $K^{c,\ep}$ denote the union of all such interval $K^{c,\ep}_i(x^\ast)$ over $i=1,\ldots,I$, where if there is no such $K^{c,\ep}_i(x^\ast)$, we define $K^{c,\ep}:=\emptyset$. Let $x$ be an arbitrary point of $\T\setminus (\cup_{1\le i\le I} [x_i-2\delta,x_i+2\delta]\cup K^{c,\ep})$ and  let $\gamma^\ast$, $\gamma^\ep$ be a minimizing curve for $v(x)$, $v^\ep(x)$, respectively. Then $\gamma^\ast$, $\gamma^\ep$ reaches one of $[x_i-2\delta,x_i+2\delta]$, $i=1,\ldots,I$ at most within time $T>0$ estimated as (\ref{estimate}). Hence,  we have with (\ref{41}),
\begin{eqnarray*}
v^\ep(x)-v^\ast(x)&\le& \int^0_{-T}(e^{\ep s}-1) \{L(\gamma^\ast(s),\gamma^\ast{}'(s))-c\gamma^\ast{}'(s) \}ds \\
&&+ (e^{-\ep T}-1)v^\ep(\gamma^\ast(-T))+ v^\ep(\gamma^\ast(-T))-v^\ast(\gamma^\ast(-T))\\
&\le&\alpha_3 (\ep T^2+ \ep T +\delta),\\ 
v^\ep(x)-v^\ast(x)&\ge& \int^0_{-T}(e^{\ep s}-1) \{L(\gamma^\ep(s),\gamma^\ep{}'(s))-c\gamma^\ep{}'(s) \}ds \\
&&+ (e^{-\ep T}-1)v^\ep(\gamma^\ep(-T))+ v^\ep(\gamma^\ep(-T))-v^\ast(\gamma^\ep(-T))\\
&\ge&-\alpha_3 (\ep T^2+ \ep T +\delta).
\end{eqnarray*} 
Therefore, we obtain 
\begin{eqnarray*}
|v^\ep(x)-v^\ast(x)|\le \alpha_3 (\ep T^2+ \ep T +\delta)  \mbox{\,\,\ on $\T\setminus K^{c,\ep}$.}
\end{eqnarray*}
Let $x$ be an arbitrary point of $K^{c,\ep}$. Then, there is $\tilde{x}\neq K^{c,\ep} $ such that $|x-\tilde{x}|\le \delta$ and 
\begin{eqnarray*}
|v^\ep(x)-v^\ast(x)|&=& |v^\ep(x)-v^\ast(x)-(v^\ep(\tilde{x})-v^\ast(\tilde{x}))+(v^\ep(\tilde{x})-v^\ast(\tilde{x}))|\\
&\le &|v^\ep(x)-v^\ep(\tilde{x})|+|v^\ast(x)-v^\ast(\tilde{x})|+|v^\ep(\tilde{x})-v^\ast(\tilde{x})|\\
&\le&\alpha_4 \delta + \alpha_3 (\ep T^2+ \ep T +\delta).
\end{eqnarray*}
Thus, we conclude that 
\begin{eqnarray*}
|v^\ep(x)-v^\ast(x)|\le \alpha_5 (\ep T^2+\delta)\le \alpha_6(\ep |\log\delta|^2+\delta)  \mbox{ \,\,\,on $\T$,}
\end{eqnarray*}
where we may take any $\delta=\delta(\ep)$ which tends to  $0$ with $\delta(\ep)\ge\alpha_1\ep$.

Now we deal with the general case. Let $J_i\subset (0,\ep_0)$, $i=1,\ldots,I$ be such that if $\ep\in J_i$ we have $x_i^{\ep,c}\in \mathcal{M}_\alpha^\ep(c )$. If  $\inf J_i=0$, we have $v^\ast(x_i)=0$ due to (iii) of Theorem \ref{d-Mather1}.  Let $\ep_\ast>0$ be the minimum value of $\{\inf J_i\,|\,i\mbox{ is such that $\inf J_i>0$}\}$. If there is no such $i$, re-define $\ep_\ast$ as $\ep_\ast:=\ep_0$. Note that, if $x^{c,\ep}_i\in \mathcal{M}_\alpha^{\ep}(c )$ for some  $\ep\in(0,\ep_\ast)$, we have $v^\ast(x_i)=0$.  For each sufficiently small $\ep\in(0,\ep_\ast)$, there exists at least one $i$ for which $x_i^{c,\ep}\in \mathcal{M}_\alpha^\ep(c )$ and $v^\ast(x_i)=0$. Suppose that $x_{i+1}^{c,\ep}\not\in\mathcal{M}_\alpha^\ep(c )$. Then, one of the following two cases hold to be true: 
\begin{itemize}
\item[(i)] $c+v_x^\ep(x_{i+1}^{c,\ep})$ is negative and hence  $c+v_x^\ep(x)$ is negative for all $x\in[x_{i+1}^{c,\ep}, x_{i+2}^{c,\ep})$,  
\item [(ii)] $c+v_x^\ep(x_{i+1}^{c,\ep})$ is positive and hence $c+v_x^\ep(x)$ is positive for all $x\in(x_i^{c,\ep}, x_{i+1}^{c,\ep})$,
\end{itemize}
where we carefully  note that discontinuity of $c+v^\ep_x$ is allowed to be jump down only, and so is discontinuity of  $c+v^\ast_x$.  Since  $c+v_x^\ep$ converges to $c+v^\ast_x$ pointwise a.e. and hence uniformly except an arbitrarily small neighborhood of points of discontinuity as $\ep\to0+$, the sign of $c+v^\ep_x$ is identical with that of  $c+v^\ast_x$  except a small neighborhood of points of discontinuity and zero points of $c+v^\ep_x$ and $c+v^\ast_x$. 

%Hence we see that (i) (resp. (ii)) holds if $c+v_x$ is negative (resp.   is positive or changes the sign) on $x\in(x_{i+1}^{c,\ep}, x_{i+2}^{c,\ep})$.   

If (i) is the case: We have $x_{i+k}^{c,\ep}\in\mathcal{M}_\alpha^\ep(c)$ such that $\graph(c+v_x^\ep)$ within $[x_{i+1}^{c,\ep},x_{i+k}^{c,\ep}]$ coincides with the unstable manifold growing from  $(x_{i+k}^{c,\ep},0)$, which implies that $v^\ast(x_{i+k})=0$ and $\graph(c+v^\ast_x)$  within $[x_{i+1},x_{i+k}]$ coincides with the stable/unstable manifolds lying on $p\le0$ without any discontinuity. Hence, we obtain by the same procedure as the above, 
$$|v^\ep(x)-v^\ast(x)|\le \alpha_7 (\ep T^2+ \ep T +\delta)\mbox{ \,\,\, on $[x_{i+k-1}+2\delta,x_{i+k}+2\delta)$}.$$
Then we obtain by Lipschitz continuity of $v^\ep$ and $v^\ast$, 
$$|v^\ep(x)-v^\ast(x)|\le \alpha_7 (\ep T^2+ \ep T +\delta)+\alpha_8\delta \mbox{ \,\,\, on $[x_{i+k-1}-2\delta,x_{i+k-1}+2\delta]$},$$
which yields the estimate within  $[x_{i+k-2}+2\delta,x_{i+k-1}-2\delta)$.
We may repeat this argument with the above treatment in $[x_i,x_{i+1}-2\delta]\cap K^{c,\ep}$ if $K^{c,\ep}$ is non-empty, and obtain 
$$|v^\ep(x)-v^\ast(x)|\le \alpha_9 (\ep T^2+ \ep T +\delta) \mbox{ \,\,\, on $[x_{i},x_{i+k}]$}.$$
\indent If (ii) is the case: We have $x_{i+k}^{c,\ep}\in\mathcal{M}_\alpha^\ep(c)$ for which $\graph(c+v_x^\ep)$ within $[x_i^{c,\ep},\bar{x}^{c,\ep}]$ 
with some $\bar{x}^{c,\ep}\in(x_{i+k-1}^{c,\ep},x_{i+k}^{c,\ep}]$ coincides with the unstable manifold growing from $(x_i^{c,\ep},0)$ and, if $\bar{x}^{c,\ep}\neq x_{i+k}^{c,\ep}$, the graph has  discontinuity at $\bar{x}^{c,\ep}$ switching to the unstable manifold of $(x_{i+k}^{c,\ep},0)$ lying on $p\le0$. 
This implies that $\graph(c+v^\ast_x)$  within $[x_{i},\bar{x}]$ with some $\bar{x}\in(x_{i+k-1},x_{i+k}]$  coincides with  the stable/unstable manifolds lying on $p\ge0$ without any discontinuity and, if $\bar{x}\neq x_{i+k}$, the graph  has discontinuity at $\bar{x}$ switching to the unstable manifold of $(x_{i+k},0)$ lying on $p\le0$. Then we obtain with the same procedure as the above,    
 $$|v^\ep(x)-v^\ast(x)|\le \alpha_{10} (\ep T^2+ \ep T +\delta)\mbox{ \,\,\, on $[x_{i},x_{i+1}+2\delta)$},$$
 and by repeating this argument with the same treatment in $[x_{i+k-1},x_{i+k}-2\delta]\cap K^{c,\ep}$ if $K^{c,\ep}$ is non-empty,  
 $$|v^\ep(x)-v^\ast(x)|\le \alpha_{11} (\ep T^2+ \ep T +\delta)\mbox{ \,\,\, on $[x_{i},x_{i+k}]$}.$$
 
 In this way, we conclude $|v^\ep(x)-v^\ast(x)|\le \alpha_{12}(\ep |\log\delta|^2+\delta)$  on $\T$, where we may take any $\delta=\delta(\ep)$ which tends to  $0$ with $\delta(\ep)\ge\alpha_1\ep$. 
\end{proof}
%%%%%%%%%%%%%%%%%%%
%%%%%%%%%%%%%%%%%%%
\subsection{Error estimate in (C2)}
Suppose that a viscosity solution $v$ of (HJ) admits a KAM $n$-torus. This means the following:  $\graph (c+v_x)$ is a smooth $\phi^s_H$-invariant torus on which the Hamiltonian dynamics (\ref{HS}) is $C^1$-conjugate to the linear flow on $\T^n$ with a $\nu,\eta$-Diophantine rotation vector $\omega\in\R^n$, i.e.,  there exists $C^1$-embedding $\Phi=(\varphi,\psi):\T^n\to\T^n\times\R^n $ such that 
\begin{eqnarray*}  
&&\Phi(\T^n)=\graph (c+v_x),\\
&&\phi^s_H(x_0,p_0)=\Phi(\omega s+\theta_0) \mbox{ for all $(x_0,p_0)\in\graph (c+v_x)$},\\
&&|\omega\cdot z|\ge\nu(|z_1|+\cdots+|z_n|)^{-\eta} \mbox{ for all $z\in\Z^n\setminus\{0\}$ ($\nu>0,\eta>n-1$)},\\
&&\theta_0 \mbox{ is a constant depending on $(x_0,p_0)$}.
\end{eqnarray*}
Note that in such a case, viscosity solutions $v$ are unique up to constants. Since the linear flow with a Diophantine rotation vector is ergodic on $\T^n$, each minimizing curve $\gamma^\ast(s)$ for $v(x_0)$, satisfying $\gamma^\ast(s)\equiv\varphi(\omega s+\theta_0)$, is also ergodic on $\T^n$. According to \cite{Dumas},\cite{BGW}, for each $\delta>0$ the set 
$$\mathcal{N}_\delta:=\{  \omega s +\theta_0 \,|\,-\frac{\beta_0}{\delta^\eta}\le s\le0\}\mbox{ ($\beta_0>0$ is a constant)}$$
is $\delta$-dense in $\T^n$, i.e., 
$$\bigcup_{\zeta\in\mathcal{N}_\delta} B_\delta(\zeta)=\T^n,$$
where $B_\delta(\zeta)$ is the $\delta$-ball of around $\zeta$. Hence, along only one minimizing curve $\gamma^\ast(s)$ within $-\frac{\beta_0}{\delta^\eta}\le s\le0$, we obtain the whole information on $\T^n$ in ``$\delta$-accuracy". This strategy is first used by Bessi \cite{Bessi} in the vanishing viscosity method, and by Soga \cite{Soga3} in the finite difference approximation. 
%%%%%%%%%%%%
\begin{Thm}\label{4321}
Let $v$ be a viscosity solution of (\ref{HJ}) corresponding to a KAM torus with a $\nu,\eta$-Diophantine rotation vector. Let $v^\ep$ be  the unique viscosity solution of (\ref{dHJ}). Then, adding a constant which may depend on $\ep$ to $v$ if necessary, we have for $\ep\to0$,
$$\norm v^\ep-v\norm_{C^0(\T^n)}\le \beta\varepsilon^{\frac{1}{1+2\eta}},$$ 
where $\beta>0$ is a constant independent of $\ep$.
\end{Thm}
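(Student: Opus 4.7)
The plan is to compare the variational representation (\ref{value-func}) for $v$ with the discount version (\ref{d-value-func}) for $v^\ep$ along a single KAM orbit, exploiting the Diophantine ergodic density from \cite{Dumas,BGW} to sweep out $\T^n$ in a controlled backward-time window. First I would normalize by adding a constant to $v$ so that $\max_{\T^n}(v^\ep-v)=-\min_{\T^n}(v^\ep-v)=:M^\ep=\|v^\ep-v\|_{C^0}$. Pick $x^\ep_+$ and $\tilde x^\ep$ where this max and min are attained. Because $v$ is $C^2$ (the KAM torus is a smooth graph), $v_x(x^\ep_+)$ exists, and the unique $(\text{HJ})$-minimizing curve for $v(x^\ep_+)$ has the form $\gamma(s)=\varphi(\omega s+\theta_+)$, $\varphi(\theta_+)=x^\ep_+$.

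Next I would use $\gamma$ as a test curve in the discount value-function inequality (\ref{d-value-func}) and combine with the exact identity (\ref{value-func}) for $v$:
\begin{align*}
v^\ep(x^\ep_+) &\le \int_{-T}^0 e^{\ep s}\bigl(L(\gamma,\gamma')-c\cdot\gamma'+h(c)\bigr)ds + e^{-\ep T}v^\ep(\gamma(-T)),\\
v(x^\ep_+) &= \int_{-T}^0 \bigl(L(\gamma,\gamma')-c\cdot\gamma'+h(c)\bigr)ds + v(\gamma(-T)).
\end{align*}
Since $\gamma$ stays on the compact KAM torus, $L(\gamma,\gamma')-c\cdot\gamma'+h(c)$ is uniformly bounded. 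Subtracting and using $|e^{\ep s}-1|\le \ep |s|$ for $s\in[-T,0]$, $|e^{-\ep T}-1|\le \ep T$, and boundedness of $v$, I obtain
\[
M^\ep \le \beta_1(\ep T^2+\ep T) + e^{-\ep T}\bigl(v^\ep(\gamma(-T))-v(\gamma(-T))\bigr).
\]

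Then I would invoke the Diophantine density: the linear orbit $\{\omega s+\theta_+:s\in[-\beta_0/\delta^\eta,0]\}$ is $\delta$-dense in $\T^n$, and since $\varphi$ is Lipschitz, $\{\gamma(s):s\in[-\beta_0/\delta^\eta,0]\}$ is $\beta_2\delta$-dense in $\T^n$. So I can choose $T\in[0,\beta_0/\delta^\eta]$ with $|\gamma(-T)-\tilde x^\ep|\le \beta_2\delta$. Lipschitz continuity of $v^\ep-v$ (uniform in $\ep$) then gives $v^\ep(\gamma(-T))-v(\gamma(-T))\le -M^\ep+\beta_3\delta$. Plugging in,
\[
M^\ep(1+e^{-\ep T})\le \beta_4(\ep T^2+\ep T+\delta),
\]
hence $M^\ep\le \beta_5(\ep/\delta^{2\eta}+\delta)$ after taking $T=\beta_0/\delta^\eta$. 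Optimizing by setting $\delta=\ep^{1/(1+2\eta)}$ yields $M^\ep\le\beta\,\ep^{1/(1+2\eta)}$, which is the claimed estimate.

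The main obstacle I expect is the bookkeeping around the normalization and error terms: one must ensure that the single one-sided comparison using the $(\text{HJ})$-minimizer (which is available because the KAM torus is $\graph(c+v_x)$) suffices, with the symmetry $m^\ep=-M^\ep$ making the factor $(1+e^{-\ep T})$ appear on the left. The ergodic/Diophantine step is a direct application of the density bound, but pairing it with the $\ep T^2$ term from the discount perturbation and optimizing is where the exponent $1/(1+2\eta)$ emerges, and matching this exponent requires that no intermediate step loses a power of $\ep$ or $\delta$.
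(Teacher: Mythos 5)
Your argument is correct and follows essentially the same route as the paper's proof: a one-sided comparison of (\ref{value-func}) and (\ref{d-value-func}) along a single KAM minimizing curve, the $O(\ep T^2)$ bound on the discount perturbation, the Dumas/Bourgain--Golse--Wennberg $\delta$-density of the orbit in time $O(\delta^{-\eta})$, and the optimization $\delta=\ep^{1/(1+2\eta)}$. The only cosmetic difference is the normalization: the paper shifts $v$ so that $v^\ep-v\le0$ touches zero at one point and then propagates the two-sided estimate along the whole orbit to every $x\in\T^n$ by density plus Lipschitz continuity, whereas you use the symmetric normalization and only need the orbit from the maximum point to pass near the minimum point.
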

%%%%%%%%%%%%
\begin{Rem}  There exists $v^\ast=\lim_{\ep\to0+}v^\ep$, which is  identical with $v$ up to constant. The theorem  does not necessarily mean that the error between $v^\ep$ and $v^\ast$  is estimated by $\beta\varepsilon^{\frac{1}{1+2\eta}}$. The possible added constant also goes to $0$ as $\ep\to0$. 
\end{Rem}
%%%%%%%%%%%%%%%%
\begin{proof}[{\it Proof of Theorem {\rm\ref{4321}}}]
In what follows, $\beta_i$ are some positive constants. Adding a constant if necessary, we have 
\begin{eqnarray}\label{411}
v^\ep(x)-v(x)\le0\mbox{ in $\T^n$, \,\,\,}
v^\ep(x^\ast)-v(x^\ast)=0\mbox{ for some $x^\ast=x^\ast(\ep)\in\T^n$}.
\end{eqnarray}
Let $\gamma^\ast$ be a minimizing curve for $v(x^\ast)$. We see that 
\begin{eqnarray*}
v(x^\ast)&=& \int^0_{-\tau}  ( L(\gamma^\ast(s),\gamma^\ast{}'(s))-c\cdot\gamma^\ast{}'(s) +h(c ) ) ds+v(\gamma^\ast(-\tau)),\\
v^\ep(x^\ast)&\le& \int^0_{-\tau}  e^{\ep s}( L(\gamma^\ast(s),\gamma^\ast{}'(s))-c\cdot\gamma^\ast{}'(s) +h(c ) ) ds+e^{-\ep \tau}v^\ep(\gamma^\ast(-\tau)). 
\end{eqnarray*}
Subtracting $v(x^\ast)$ from $v^\ep(x^\ast)$ yields
\begin{align*}
&v^\ep(x^\ast)-v(x^\ast)\\
\le&\,
\int^0_{-\tau}(e^{\ep s}-1)( L(\gamma^\ast(s),\gamma^\ast{}'(s))-c\cdot\gamma^\ast{}'(s) +h(c ) ) ds
+e^{-\ep \tau}v^\ep(\gamma^\ast(-\tau))-v(\gamma^\ast(-\tau))\\
=&\,v^\ep(\gamma^\ast(-\tau))-v(\gamma^\ast(-\tau))\\
&+\underline{\int^0_{-\tau}(e^{\ep s}-1)( L(\gamma^\ast(s),\gamma^\ast{}'(s))-c\cdot\gamma^\ast{}'(s) +h(c ) ) ds
+(e^{-\ep \tau}-1)v^\ep(\gamma^\ast(-\tau))}_{(\sharp)}.
%v^\ep(x^\ast)-v(x^\ast)&\le&\int^0_{-\tau}(e^{\ep s}-1)( L(\gamma^\ast(s),\gamma^\ast{}'(s))-c\cdot\gamma^\ast{}'(s) +h(c ) ) ds\\
%&&+e^{-\ep \tau}v^\ep(\gamma^\ast(-\tau))-v(\gamma^\ast(-\tau))\\
%&=&v^\ep(\gamma^\ast(-\tau))-v(\gamma^\ast(-\tau))\\
%&&+\underline{\int^0_{-\tau}(e^{\ep s}-1)( L(\gamma^\ast(s),\gamma^\ast{}'(s))-c\cdot\gamma^\ast{}'(s) +h(c ) ) ds}\\
%&&\underline{+(e^{-\ep \tau}-1)v^\ep(\gamma^\ast(-\tau))}{}_{(\sharp)}. 
\end{align*}
Since $|  L(\gamma^\ast(s),\gamma^\ast{}'(s))-c\cdot\gamma^\ast{}'(s) +h(c )|\le \beta_1$ for all $s\in\R$, $|v^\ep|\le\beta_2 $ independently of $\ep$ and $e^{\ep s}\le1+\ep s +\frac{(\ep s)^2}{2}$, $e^{\ep s}\ge1+\ep s $ for all $s\le0$, we obtain  
\begin{eqnarray*}
|(\sharp)|\le\beta_1\int^0_{-\tau}(1-e^{\ep s})ds+\beta_2(1-e^{-\ep \tau})\le \beta_1\frac{\ep \tau^2}{2}+\beta_2\ep\tau\mbox{\,\,\, for any $\tau>0$}.
 \end{eqnarray*}
 By (\ref{411}), we obtain $-(\sharp)\le v^\ep(\gamma^\ast(-\tau))-v(\gamma^\ast(-\tau))\le 0$, i.e., 
 \begin{eqnarray*}\label{422}
 |v^\ep(\gamma^\ast(-\tau))-v(\gamma^\ast(-\tau))|\le \max\{\beta_3\ep\tau^2, \beta_3\ep\tau\}\mbox{\,\,\,\, for any $\tau\ge0$}.
 \end{eqnarray*}
 Note that $\Phi=(\varphi,\psi)$ is an embedding and hence $\varphi:\T^n\to\T^n$ is diffeomorphic. Let $x\in\T^n$ be an arbitrary point. Set $\zeta_x:=\varphi^{-1}(x)$. Since $\mathcal{N}_\delta$ is $\delta$-dense in $\T^n$, we have $\zeta\in\mathcal{N}_\delta $ such that $\zeta_x\in B_\delta(\zeta)$. For this $\zeta$, there exists $s\in[-\frac{\beta_0}{\delta^\eta},0]$ such that 
 $$\zeta=\omega s+\theta_0=\varphi^{-1}(\gamma^\ast(s)).$$ 
 Therefore, we see that 
\begin{align*}
&|v^\ep(x)-v(x)|\\
\le&\, 
|v^\ep(x)-v^\ep(\gamma^\ast(s))|+|v^\ep(\gamma^\ast(s))-v(\gamma^\ast(s))|+|v(\gamma^\ast(s))-v(x)|\\
=&\, |v^\ep(\gamma^\ast(s))-v(\gamma^\ast(s))|
+|v^\ep(\varphi(\zeta_x))-v^\ep(\varphi(\zeta))|+|v(\varphi(\zeta_x))-v(\varphi(\zeta))|\\
\le&|v^\ep(\gamma^\ast(s))-v(\gamma^\ast(s))|+\beta_4|\zeta_x-\zeta|\\
\le&\max\{ \beta_5(\frac{\ep}{\delta^{2\eta}}+\delta),  \beta_5(\frac{\ep}{\delta^{\eta}}+\delta) \}
=\beta_5(\frac{\ep}{\delta^{2\eta}}+\delta).
\end{align*}
%\begin{eqnarray*}
%|v^\ep(x)-v(x)|&\le& |v^\ep(x)-v^\ep(\gamma^\ast(s))|+|v^\ep(\gamma^\ast(s))-v(\gamma^\ast(s))|+|v(\gamma^\ast(s))-v(x)|\\
%&=& |v^\ep(\gamma^\ast(s))-v(\gamma^\ast(s))|\\
%&&+|v^\ep(\varphi(\zeta_x))-v^\ep(\varphi(\zeta))|+|v(\varphi(\zeta_x))-v(\varphi(\zeta))|\\
%&\le&|v^\ep(\gamma^\ast(s))-v(\gamma^\ast(s))|+\beta_4|\zeta_x-\zeta|\\
%&\le&\max\{ \beta_5(\frac{\ep}{\delta^{2\eta}}+\delta),  \beta_5(\frac{\ep}{\delta^{\eta}}+\delta) \}\\
%&=& \beta_5(\frac{\ep}{\delta^{2\eta}}+\delta).
%\end{eqnarray*}
Taking $\delta=\ep^\frac{1}{1+2\eta}$, we obtain the conclusion. 
\end{proof}

\end{document}